\theoremstyle{break}
\newtheorem{thm}{Theorem}[section]
\newtheorem{rem}[thm]{Remark}
\newtheorem{lemma}[thm]{Lemma}
\newtheorem{prop}[thm]{Proposition}
\newtheorem{defn}[thm]{Definition}
\theoremstyle{nonumberplain}
\newtheorem{proof}{Proof}
\newcommand*\DeclareMathSymbolShorthand[2]{
   \begingroup
   \setkeys{DMSS}{name=#2,#1}%
   \if\DMSS@overwrite 
   \else
      \expandafter\@ifdefinable\csname \DMSS@prefix\DMSS@name\endcsname{%
        \def\DMSS@overwrite{00}
      }%
   \fi%
   \if\DMSS@overwrite 
      \expandafter\@firstofone%
   \else\expandafter\@gobble\fi%
   {\protected\expandafter%
        \xdef\csname \DMSS@prefix\DMSS@name \endcsname{%
        \unexpanded\expandafter{\DMSS@format{#2}}}}%
   \endgroup}
   \edef\DMSS@overwrite{\csname if#1\endcsname 00\else 01\fi}}
\newcommand\MakeDeclareMathSetCommand[3]{%
   \expandafter\MakeDeclareShorthandCommandAux\csname math#2format\endcsname
   {#1}{#2}{#3}}
\def\MakeDeclareShorthandCommandAux#1#2#3#4{%
   \newcommand*#1{#4}%
   \newcommand*#2[2][]{%
      \DeclareMathSymbolShorthand{format=#1,prefix=#3,##1}{##2}%
   }}
\MakeDeclareMathSetCommand{\DeclareMathSet}{numbers}{\mathbb}
\MakeDeclareMathSetCommand{\DeclareMathNumbers}{numb}{\mathbb}
\MakeDeclareMathSetCommand{\DeclareMathGroup}{group}{\mathrm}
\newcommand{\PSL}[0]{\mathbf{PSL}}
\newcommand{\U}[0]{\mathbf{U}}
\newcommand{\GL}[0]{\mathbf{GL}}
\newcommand{\SetH}[0]{\mathds{H}}
\newcommand{\tr}{\text{tr}}
\newcommand{\T}{\mathcal{T}}
\newcommand{\M}{\mathcal{M}}
\renewcommand{\phi}{\varphi}
\renewcommand{\epsilon}{\varepsilon}
\DeclareMathOperator{\Id}{Id}
\newcommand{\ra}{\rightarrow}
\title{Coordinates for the Universal Moduli Space of Holomorphic Vector Bundles}
\author{Jørgen Ellegaard Andersen and Niccolo Skovg{\aa}rd Poulsen}
\begin{document}

\maketitle 

\begin{abstract}
In this paper we provide two ways of constructing complex coordinates on the moduli space of pairs of a Riemann surface and a stable holomorphic vector bundle centred around any such pair. We compute the transformation between the coordinates to second order at the center of the coordinates. We conclude that they agree to second order, but not to third order at the center.

\end{abstract}

\section{Introduction}
\allowdisplaybreaks
Fix $g,n>1$ to be integers and let $d\in \{0, \ldots n-1\}.$ Let $\Sigma$ be a closed oriented surface of genus $g$.
Consider the universal moduli space $\M$ consisting of equivalence classes of pairs $(\phi: \Sigma \ra X,E)$ where $X$ is a Riemann Surface of genus $g$, $\phi : \Sigma \ra X$ is a diffeomorphism and $E$ is a semi-stable bundle over $X$ of rank $n$ and degree $d$. Let $\M^s$ be the open dense subset of $\M$ consisting of equivalence classes of such pairs $(\phi: \Sigma \ra X,E)$ with $E$ stable. The main objective of this paper is to provide coordinates in a neighbourhood of the equivalence class of any pair $(\phi: \Sigma \ra X,E)$ in $\mathcal M^s$. There is an obvious forgetful map 
$$\pi_\T :\M \ra \T$$
where $\T$ is the Teichm\"{u}ller space of $\Sigma$, whose fiber over $[\phi: \Sigma \ra X]\in \T$ is the moduli space of semi-stable bundles for that Riemann surface structure on $\Sigma$. Let $\pi_\T^s : \M^s \ra \T$ denote the restriction of $\pi_\T$ to $\M^s$, and we denote a point $[\phi: \Sigma \ra X]$ in $\T$ by $\sigma$.

We recall that locally around any $\sigma\in \T$ there are the Bers coordinates \cite{bers}. Further, for any point $[E]$ in some fiber $(\pi^{s}_\T)^{-1}(\sigma)$ we have the Zograf and Takhtadzhyan coordinates near $[E]$ along that fiber of $\pi_\T$ \cite{ZTVB}.

In order to describe our coordinates on $\M^s$ we recall the Narasimhan-Seshadri theorem. Let $\tilde\pi_1(\Sigma)$ be the universal central $\numbZ/n\numbZ$ extension of $\pi_1(\Sigma)$ and let $M$ be the moduli space of representations of $\tilde\pi_1(\Sigma)$ to $\U(n)$ such that the central generator goes to $e^{2\pi i d/n}\Id$. Let $M'$ be the subset of $M$ consisting of equivalence classes of irreducible representations. The Narasimhan-Seshadri theorem gives us a diffeomorphism
$$\Psi : \T\times M' \ra \M^s $$
which we use to induce a complex structure on $\T\times M'$ such that $\Psi$ is complex analytic. We will now represent a point in $\T$ by a representation
$$ \rho_0 : \tilde\pi_1(\Sigma) \ra \PSL(2)$$
and denote the corresponding point in Teichm\"{u}ller space by $X_{\rho_0}$. Here $\rho_0$ is really a representation of $\pi_1(\Sigma)$ pulled back to $\tilde\pi_1(\Sigma)$. A point in $M'$ will be represented by a representation
$$\rho_{E} : \tilde\pi_1(\Sigma) \ra \U(n)$$
which corresponds to the stable holomorphic bundle $E$ on $X_{\rho_0}$.

We build complex analytic coordinates around any such $(\rho_0,\rho_{E})\in\mathcal{T}\times M'$ by providing a complex analytic isomorphism from a small neighbourhood around 0 in the vector space $H^{0,1}(X_{\rho_0}, TX_{\rho_0})\oplus H^{0,1}(X_{\rho_0},\text{End}E)$ to a small open subset containing $(\rho_0,\rho_{E})$ in $ \mathcal{T}\times M'$. 

The coordinates are given by constructing a certain family
\begin{equation}
  \label{eq:phidomain}
\Phi^{\mu\oplus\nu}:\SetH\times \GL(n,\numbC)\to \SetH\times \GL(n,\numbC)
\end{equation}
of bundle maps of the trivial $\GL(n,\numbC)$-principal bundles over $\SetH$
indexed by pairs of sufficiently small elements $$\mu\oplus\nu\in H^{0,1}(X_{\rho_0}, TX_{\rho_0})\oplus H^{0,1}(X_{\rho_0},\text{End}E).$$
These bundle maps will uniquely determine  representations $(\rho^{\mu},\rho^{\mu\oplus\nu}_{E})\in\mathcal{T}\times M'$ such that
\begin{equation} \label{eqeq}
\rho^\mu(\gamma) \times\rho^{\mu\oplus\nu}_E(\gamma)=\Phi^{\mu\oplus\nu}\circ(\rho_0(\gamma)\times\rho_{E}(\gamma))\circ(\Phi^{\mu\oplus\nu})^{-1}
\end{equation}
for all $\gamma \in \tilde\pi_1(X)$ by the following theorem. Pick a base point $z_0 \in \SetH$ and let $p_{\GL(n,\numbC)}$ be the projection onto $\GL(n,\numbC)$ of the trivial bundle $\SetH\times \GL(n,\numbC)$.

\begin{thm}
\label{thm:uni coord}
For all sufficiently small $\mu\oplus\nu\in H^{0,1}(X_{\rho_0}, TX_{\rho_0})\oplus H^{0,1}(X_{\rho_0},\text{End}E)$ there exist a unique bundle map $\Phi^{\mu\oplus\nu}$ such that
\begin{enumerate}
\item $\Phi^{\mu\oplus\nu}$ solves 
\begin{equation}\label{eqM}
\bar \partial_\SetH\Phi^{\mu\oplus\nu} =\partial \Phi ^{\mu\oplus\nu} (\mu\oplus\nu),
\end{equation} 
where $\nu$ is considered a left-invariant vector field on $\GL(n,\numbC)$ at each point in $\SetH$.
\item The base map extends to the boundary of $\SetH$ and fixes $0,1$ and $\infty$.
\item The pair of representations $(\rho^\mu,\rho^{\mu\oplus\nu}_E)$ defined by equation (\ref{eqeq}) represents a point in $\mathcal{T}\times M'$.
\item $p_{\GL(n,\numbC)}(\Phi^{\mu\oplus\nu}(z_0,e))$ has determinant $1$ and is positive definite. 
\end{enumerate}
\end{thm}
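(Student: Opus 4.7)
The plan is to decompose $\Phi^{\mu\oplus\nu}$ into its base and fibre components and reduce the problem to two well-studied PDEs. As a morphism of the trivial $\GL(n,\numbC)$-principal bundle, $\Phi^{\mu\oplus\nu}$ has the form $\Phi^{\mu\oplus\nu}(z,A)=(f^\mu(z),\tilde G(z)A)$ for some $f^\mu:\SetH\to\SetH$ and $\tilde G:\SetH\to\GL(n,\numbC)$. Projecting (\ref{eqM}) onto the base gives the classical Beltrami equation $\bar\partial f^\mu=\mu\,\partial f^\mu$. The Ahlfors--Bers measurable Riemann mapping theorem then delivers a unique quasiconformal solution on $\SetH$ extending to the boundary and fixing $0,1,\infty$; this handles the base part of~(1) and all of~(2).

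Using that $\nu$ acts as a left-invariant vector field on $\GL(n,\numbC)$, the fibre component of~(\ref{eqM}) reduces to
\[
(\bar\partial_z-\mu\,\partial_z)\tilde G=\tilde G\cdot\nu.
\]
Setting $H=\tilde G\circ(f^\mu)^{-1}$ and exploiting the Beltrami equation for $f^\mu$, a short computation turns this into the standard equation $\partial_{\bar w}H=H\cdot N$ on $\SetH$, where $N$ is the suitably rescaled pushforward of $\nu$ by $f^\mu$. Viewing rows of $H$ separately, this is a $\bar\partial$-operator twisted by $N$, acting on $\numbC^n$-valued functions. For sufficiently small $\nu$, an implicit function theorem argument about the trivial solution $H\equiv\Id$ at $\nu=0$ (where the linearisation is the ordinary $\bar\partial$) produces a $\GL(n,\numbC)$-valued solution close to the identity. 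Uniqueness holds up to left multiplication by a holomorphic $\GL(n,\numbC)$-valued function; equivariance of $\mu\oplus\nu$ under $\tilde\pi_1(\Sigma)$ combined with irreducibility of $\rho_E$ cuts the ambiguity down to the scalars $\numbC^\times\cdot\Id$, and condition~(4) pins the remaining scalar uniquely (determinant $1$ fixes the modulus and an $n$-th root of unity ambiguity, and positive-definiteness selects among these).

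It remains to verify~(3), i.e.\ that the resulting pair $(\rho^\mu,\rho^{\mu\oplus\nu}_E)$ lies in $\T\times M'$. That $\rho^\mu$ lies in $\T$ is classical Ahlfors--Bers deformation theory. The representation $\rho^{\mu\oplus\nu}_E$ is a priori only $\GL(n,\numbC)$-valued, but the associated holomorphic bundle on $X_{\rho^\mu}$ is a small deformation of the stable bundle $E$, hence itself stable, so by Narasimhan--Seshadri it carries a unique (up to conjugation) irreducible $\U(n)$-representation. The dimensions match ($\dim_\numbC H^{0,1}(X_{\rho_0},\text{End}E)=n^2(g-1)+1=\dim_\numbC M'$), and the implicit function theorem identifies $\rho^{\mu\oplus\nu}_E$ with this unitary representation up to $\GL(n,\numbC)$-conjugation, which is absorbed into~(4). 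The main obstacle is precisely this last identification --- showing that the $\GL(n,\numbC)$-valued output of the PDE matches the unitary representation produced by Narasimhan--Seshadri --- which requires the infinitesimal Narasimhan--Seshadri correspondence together with careful bookkeeping of the gauge encoded in~(4).
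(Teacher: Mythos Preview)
Your decomposition into base and fibre components, with the base map handled by Ahlfors--Bers, matches the paper exactly. The approaches diverge in how the fibre equation $(\bar\partial-\mu\partial)\tilde G=\tilde G\,\nu$ is solved. The paper does not use an implicit function theorem; instead it factors $\Phi_2^{\mu\oplus\nu}=\tilde\Phi_+^{\mu\oplus\nu}\cdot\Phi_-^{\nu}$, where $\Phi_-^{\nu}$ is an \emph{antiholomorphic} solution of $\bar\partial\Phi_-=\Phi_-\cdot\nu$ (so $\partial\Phi_-=0$ and the $\mu\partial$ term is absorbed for free), and $\Phi_+^{\mu\oplus\nu}$ is the holomorphic gauge transformation on $X_{\rho_\mu}$ supplied directly by the Narasimhan--Seshadri theorem applied to the stable bundle associated with the non-unitary representation $\chi^\nu$ built from $\Phi_-$. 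A chain-rule check shows $\tilde\Phi_+=\Phi_+\circ\Phi_1^\mu$ lies in $\ker(\bar\partial-\mu\partial)$, so the product solves the full equation. The payoff of this factorisation is that condition~(3) is built in from the outset---unitarity of $\rho_E^{\mu\oplus\nu}$ is precisely what Narasimhan--Seshadri delivers---so the ``main obstacle'' you flag never arises as a separate identification step.

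Your IFT argument has a genuine soft spot: the linearisation $\bar\partial$ on matrix-valued functions on $\SetH$ has infinite-dimensional kernel (all holomorphic matrix functions), so a standard implicit function theorem does not apply without substantial additional work---either weighted function spaces on $\SetH$ or passage to the compact quotient $X_{\rho_\mu}$, where however $H$ does not descend globally. The paper's antiholomorphic ansatz for $\Phi_-$ sidesteps this analytic difficulty entirely. Your uniqueness discussion and treatment of condition~(4) are essentially right, but note that in the paper the non-scalar part of the holomorphic gauge ambiguity is exactly $\Phi_+$, absorbed by the Narasimhan--Seshadri step rather than by condition~(4); the latter only fixes the residual constant scalar, as in \cite{ZTVB}.
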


From this theorem we easily derive our main theorem of this paper.

\begin{thm}
\label{thm:holafcord}
Mapping all sufficiently small pairs $$\mu\oplus\nu\in H^{0,1}(X_{\rho}, TX_{\rho})\oplus H^{0,1}(X_{\rho},\text{End}E)$$ 
to
$$ (\rho^\mu ,\rho^{\mu\oplus\nu}_E) \in \T\times M'$$
provides local analytic coordinates centered at $(\rho_0,\rho_{E})\in \T\times M'$.

\end{thm}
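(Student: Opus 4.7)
The plan is to apply the holomorphic inverse function theorem to the map
$$F:\mu\oplus\nu\longmapsto (\rho^\mu,\rho^{\mu\oplus\nu}_E),$$
defined on a small neighbourhood of $0$ in $H^{0,1}(X_{\rho_0}, TX_{\rho_0})\oplus H^{0,1}(X_{\rho_0},\text{End}E)$. The target tangent space $T_{(\rho_0,\rho_{E})}(\T\times M')$ is canonically identified with the source via the Kodaira--Spencer and Atiyah--Bott isomorphisms, which are precisely what is used (through Narasimhan--Seshadri) to place a complex structure on $\T\times M'$. Under this identification it suffices to show that $F$ is complex analytic and that $dF(0)$ is the identity, since Theorem \ref{thm:uni coord} already guarantees that $F$ is well-defined on some neighbourhood of the origin.

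For the analyticity I would appeal to holomorphic dependence of $\Phi^{\mu\oplus\nu}$ on the parameter $\mu\oplus\nu$. Equation (\ref{eqM}) is a coupled Beltrami-type system on the trivial $\GL(n,\numbC)$-principal bundle over $\SetH$, the normalisations (2)--(4) of Theorem \ref{thm:uni coord} picking out a unique solution. A standard contraction-mapping argument in a suitable Sobolev space, extending the Ahlfors--Bers proof from the scalar case, shows that $\mu\oplus\nu\mapsto\Phi^{\mu\oplus\nu}$ is holomorphic in the parameter. Equation (\ref{eqeq}) then expresses $\rho^\mu(\gamma)\times\rho^{\mu\oplus\nu}_E(\gamma)$ as a holomorphic composition of evaluations of $\Phi^{\mu\oplus\nu}$ and its inverse, so $F$ itself is complex analytic.

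Next I would compute $dF(0)$. It is block-triangular with respect to the splitting of both source and target into base and fibre directions, because $\rho^\mu$ depends only on $\mu$. Along $\nu=0$ the $\GL(n,\numbC)$-part of (\ref{eqM}) decouples: the $\SetH$-component of $\Phi^{\mu\oplus 0}$ solves the classical Beltrami equation with coefficient $\mu$ and the Bers normalisation, so the first component of $F$ coincides with the Bers coordinate system on $\T$, whose derivative at $0$ is the identity on $H^{0,1}(X_{\rho_0}, TX_{\rho_0})$ under Kodaira--Spencer. Along $\mu=0$ the base map of $\Phi^{0\oplus\nu}$ reduces to the identity on $\SetH$ and (\ref{eqM}) becomes a matrix-valued $\bar\partial$-equation on $\SetH$ whose linearisation at $\nu=0$ is $\bar\partial_\SetH \dot{\Phi} = \dot\nu$. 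Differentiating (\ref{eqeq}) in $\nu$ at $0$ then yields precisely the infinitesimal deformation cocycle of the bundle with Dolbeault representative $\dot\nu$, which is the Atiyah--Bott identification $H^{0,1}(X_{\rho_0},\text{End}E)\cong T_{[E]}M'$. Both diagonal blocks are therefore the identity.

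The main technical obstacle will be the last step in the fibre block: one must check that the cocycle produced by differentiating (\ref{eqeq}) really lands on the class of $\nu$, and not on some non-trivial invertible image of it, once transported through Narasimhan--Seshadri and Dolbeault. This is essentially a compatibility check between the particular solution $\Phi^{0\oplus\nu}$ selected by (2)--(4) and the standard construction of the Atiyah--Bott isomorphism, but it is where all the normalisations of Theorem \ref{thm:uni coord} must pay off. Granted that check, $dF(0)$ is invertible and the holomorphic inverse function theorem supplies the claimed local analytic coordinates centered at $(\rho_0,\rho_{E})$.
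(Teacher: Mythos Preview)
Your proposal is correct and follows the same overall strategy as the paper: show the map $F$ is holomorphic with invertible differential and invoke the inverse/implicit function theorem.  The execution, however, is organised differently.

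The paper does not argue holomorphic parameter-dependence of $\Phi^{\mu\oplus\nu}$ as a separate analytic step.  Instead it computes the Kodaira--Spencer map of the family $\rho_E^{\mu\oplus\nu+t(\tilde\mu\oplus\tilde\nu)}$ at a \emph{general} point $\mu\oplus\nu$ (Proposition~\ref{prop:KSuniv}, via the auxiliary Lemma~\ref{lem:easecomp}), obtaining an explicit formula that is visibly complex linear in $\tilde\mu\oplus\tilde\nu$ and injective.  Complex linearity of the tangent map at every point yields holomorphicity of $F$ directly, and injectivity (plus equality of dimensions) gives invertibility; both conclusions are extracted from a single computation.  Your route instead front-loads the holomorphicity into an Ahlfors--Bers-type parameter-dependence argument and then only needs $dF(0)$, which you correctly identify as block-triangular with identity diagonal blocks.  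This is more economical at the differential step, but the ``standard contraction-mapping argument extending Ahlfors--Bers'' you invoke for the bundle component is exactly the place where the work has been moved; note that in the paper's construction $\Phi_2^{\mu\oplus\nu}$ is built through a Narasimhan--Seshadri gauge transformation $\Phi_+^{\mu\oplus\nu}$, whose holomorphic dependence on $(\mu,\nu)$ is not immediate from a Beltrami-type fixed-point scheme.  The paper's approach sidesteps this by never needing holomorphicity of $\Phi^{\mu\oplus\nu}$ itself, only of the induced representation, and that is read off from the Kodaira--Spencer formula.
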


Our second coordinate construction provides fibered coordinates, which along $\T$ uses Bers' coordinates, \cite{bers}, and which uses Zograf and Takhtadzhyan's coordinates \cite{ZTVB} along the fibers. We refer to section \ref{sec:Fused} for the precise description of these fibered coordinates.

Finally, we compare the two sets of coordinates by computing the infinitesimal transformation of the coordinates up to second order at the center of both coordinates.
\begin{thm}
\label{thm:secondordvar}
  The fibered coordinates and the universal coordinates  agree to second order, but not the third order at the center of the coordinates.
\end{thm}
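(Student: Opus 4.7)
The plan is to expand both coordinate systems as power series in the variables $(\mu,\nu)$ around $0$ and compare them order by order up to order $3$. Concretely, the transition function between the two coordinate systems is a map from an open neighbourhood of $0\in H^{0,1}(X_{\rho_0},TX_{\rho_0})\oplus H^{0,1}(X_{\rho_0},\mathrm{End}\,E)$ to itself, and the claim is that its $2$-jet at $0$ is the identity while its $3$-jet is not.

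For the universal construction I would write $\Phi^{\mu\oplus\nu}=\mathrm{Id}+\Phi_1+\Phi_2+\Phi_3+O(4)$ with $\Phi_k$ homogeneous of degree $k$ in $(\mu,\nu)$. Substituting into equation (\ref{eqM}) yields a recursion
\[
\bar\partial_{\SetH}\Phi_k=R_k(\Phi_1,\dots,\Phi_{k-1};\mu,\nu)
\]
whose right-hand side is explicit in the lower-order data, and together with the normalization conditions (ii) and (iv) each $\Phi_k$ is uniquely determined via the Cauchy--Ahlfors kernel on $\SetH$ and the corresponding $\bar\partial$-parametrix on $\mathrm{End}\,E$ along the fiber. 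Inserting the resulting $\Phi_k$ into the conjugation formula (\ref{eqeq}) produces the Taylor expansions of $\rho^\mu$ and $\rho_E^{\mu\oplus\nu}$ through order $3$.

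Next, I would run the analogous expansion for the fibered coordinates of Section \ref{sec:Fused}: first the Bers map corresponding to $\mu$, then the Zograf--Takhtadzhyan bundle deformation on $X_{\rho^\mu}$, expanded in the appropriately transported parameter. Because restricting (\ref{eqM}) to $\nu=0$ is precisely the Beltrami equation of Bers, and restricting to $\mu=0$ is precisely the defining equation of Zograf and Takhtadzhyan, the Teichm\"uller component and the pure $\nu^{k}$ contributions of both constructions coincide identically. The comparison therefore reduces at each order $k$ to the genuinely mixed contributions $\mu^{i}\nu^{k-i}$ with $1\le i\le k-1$. At $k=2$ there is a single such term, and a direct computation using the recursion above should show agreement through a Maurer--Cartan--type identity reflecting that both constructions solve the same integrability condition at this order. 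At $k=3$ I expect the mixed contribution (essentially $\mu^{2}\nu$) to differ between the two schemes by an explicit class in $H^{0,1}(X_{\rho_0},\mathrm{End}\,E)$ built from a nested bracket of the form $[\mu,\bar\partial^{-1}(\mu\cdot\nu)]$, reflecting the non-commutativity of the two Beltrami parameters when one is resolved after the other rather than simultaneously.

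The principal obstacle will be proving that this third-order discrepancy is genuinely nonzero. Since both constructions are defined by transcendental PDEs in two interacting families of Beltrami parameters, they could a priori agree to all orders; establishing disagreement requires computing the mixed $\mu^{2}\nu$ contribution carefully on both sides, tracking the Green's operator kernels, and exhibiting concrete $(\mu,\nu)$ for which the resulting cohomology class in $H^{0,1}(X_{\rho_0},\mathrm{End}\,E)$ is nontrivial.
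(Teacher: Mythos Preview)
Your plan is coherent, but it diverges from the paper's argument in both halves of the theorem, and in the second half this makes your task considerably harder than it needs to be.

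For the second-order agreement, the paper does not expand the solutions $\Phi^{\mu\oplus\nu}$ recursively as you propose. Instead it computes the Kodaira--Spencer maps of both coordinate families (Propositions~\ref{prop:KSuniv} and~\ref{prop:KSfib}) at a general point $(\mu,\nu)$ and then compares their \emph{first} derivatives in $(\mu,\nu)$ at the center. Agreement of the Kodaira--Spencer maps to first order is exactly agreement of the coordinates to second order. This avoids your recursion entirely: one works with the closed formulas for $KS_{\mu\oplus\nu}$ and $KS_{\nu^\mu}$ and checks term by term that the discrepancies vanish when evaluated at $(\mu,\nu)=(0,0)$ or are killed by the harmonic projection. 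Your direct expansion would also work, but the Kodaira--Spencer route packages the bookkeeping more efficiently.

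For the third-order disagreement, the paper takes a genuinely different and more economical route: rather than computing the $3$-jet of the transition map, it computes the \emph{second variation of the metric} $\frac{d^2}{d\epsilon_1 d\bar\epsilon_2}\big|_{\epsilon=0} g^{VB}$ in each coordinate system (Theorems~\ref{thm:unviersalsecondordvar} and~\ref{lem:metvarfibcord}). Since the metric is intrinsic, any difference in these expressions forces a difference in the coordinates at third order. The payoff is that the nonvanishing argument---which you correctly flag as the principal obstacle---becomes a positivity statement: specializing to $\nu_1=\nu_4$, $\mu_2=\mu_3$ and all other parameters zero, the discrepancy reduces to
\[
-i\int_\Sigma \Delta_0^{-1}\partial^*\bar\mu_2\nu_1\wedge\overline{\partial^*\bar\mu_2\nu_1}^T\rho \;-\; i\int_\Sigma \tr \bar\mu_2\mu_2\,\nu_1\wedge\overline{\nu_1}^T,
\]
which is manifestly strictly positive for $\mu_2,\nu_1\neq 0$. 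Your approach of exhibiting a nonzero class of the form $[\mu,\bar\partial^{-1}(\mu\cdot\nu)]$ in $H^{0,1}(X_{\rho_0},\mathrm{End}\,E)$ is in principle feasible, but you would have to control the harmonic projection of an explicit Green's-operator expression and then argue it does not vanish identically---with no obvious sign to exploit. The metric detour replaces that cohomological nonvanishing problem with an $L^2$-positivity, which is why the paper chooses it.
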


We refer to Theorem~\ref{thm:differthrid}, for the details of how the two set of coordinates differ at third-order.

With these new coordinates we get a new tool to analyse the metric and the curvature of the moduli space. Here we have taken the first step in understanding the curvature by calculating the second variation of the metric in local coordinates, at the center point. We intend to return to the full calculation of the curvature in these coordinates in a future publication.

\begin{rem}
If we perform our construction using elements of $\mathcal{H}^{0,1}(X,(\text{End}_0 E))$ where $(\text{End}_0 E)$ is the subspace of traceless endomorphisms, we get coordinates on the universal $\mathbf{SU}(n)$ moduli space in a completely similar way.
\end{rem}

\section{The Complex Structure on $\M^s$ from a Differential Geometric Perspective}

Recall that we endow the space $\mathcal{T}\times M$ with the structure of a complex manifold by using the Narasimhan-Seshadri theorem to provide us with the diffeomorphism
$$\Psi :  \T\times M' \ra  \M^s$$
and then declaring it to be complex analytic. There is the following alternative construction of this complex manifold structure.

Recall the general setting of \cite{JEANLGML} in the context of geometric quantization and the Hitchin connection, namely $\tilde{\mathcal{T}}$ is a general complex manifold and $(\tilde M,\omega)$ is a general symplectic manifold. In that paper a construction of a complex structure on $\tilde{\mathcal{T}}\times \tilde M$ is provided via the following proposition. But first we need the following definition.
\begin{defn}
  A family of K{\"a}hler structures on $(\tilde M,\omega)$ parametrized by $\tilde{\mathcal{T}}$ is called holomorphic if it satisfies:
  \begin{equation*}
    V'[J]=V[J]'\quad V''[J]=V[J]''
  \end{equation*}
for all vector fields $V$ on $\tilde{\mathcal{T}}$. Here the single prime on $V$ denotes projection on the $(1,0)$-part and the double prime on $V$ denotes projection on the $(0,1)$-part of the vector field $V$. Further $V[J]\in T_\sigma\otimes (\bar T_\sigma)^*\oplus\bar T_\sigma\otimes T_\sigma^*$ and we let $V[J]'$ denote the projection on the first, and $V[J]''$ the projection on the second factor.
\end{defn}
\begin{prop}[{\cite[Proposition 6.2]{JEANLGML}}]
  The family $J_\sigma$ of K{\"a}hler structures on $\tilde M$ is holomorphic, if and only if the almost complex structure $J$ given by \[ J(V\oplus X)= IV\oplus J_\sigma X,\quad \forall (\sigma,[\rho_E])\in \tilde{\mathcal{T}}\times \tilde M,  \forall (V,X)\in T_{\sigma,[{\rho_E}]}(\tilde{\mathcal{T}}\times \tilde M),\] is integrable.
\end{prop}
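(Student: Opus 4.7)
The plan is to invoke the Newlander--Nirenberg theorem, which reduces integrability of the almost complex structure $J$ on $\tilde{\mathcal T} \times \tilde M$ to involutivity of its $(0,1)$-distribution under the Lie bracket. At any point $(\sigma, [\rho_E])$ this distribution splits as $\bar T_\sigma \tilde{\mathcal T} \oplus \bar T^{J_\sigma}_{[\rho_E]} \tilde M$, so involutivity reduces to three cases. The horizontal-horizontal case (both vector fields pulled back from $\tilde{\mathcal T}$) is automatic from integrability of $I$ on $\tilde{\mathcal T}$; the vertical-vertical case is automatic because each K\"ahler structure $J_\sigma$ is integrable on $\tilde M$, and the bracket of two vertical fields stays vertical with $J_\sigma$-type determined pointwise.

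The only substantive case is mixed. Pick a real vector field $V$ on $\tilde{\mathcal T}$ and a real vector field $Y$ on $\tilde M$, both pulled back, and form the $(0,1)$-fields $V'' = \tfrac{1}{2}(V + iIV)$ and $Y''_\sigma = \tfrac{1}{2}(Y + iJ_\sigma Y)$; the latter depends on $\sigma$ through $J_\sigma$. Since $V''$ is horizontal and kills the pulled-back $Y$, the Lie bracket collapses to
\[
[V'', Y''_\sigma] \;=\; \tfrac{i}{2}\, V''[J]\, Y,
\]
a vertical vector field, where $V''[J]$ is the derivative of $J_\sigma$ in the $V''$ direction, viewed as a $(1,1)$-tensor on $\tilde M$ at $[\rho_E]$. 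Since $V''[J]$ anticommutes with $J_\sigma$, write $V''[J] = V''[J]' + V''[J]''$ with $V''[J]' \in T_\sigma \otimes \bar T_\sigma^*$ and $V''[J]'' \in \bar T_\sigma \otimes T_\sigma^*$; evaluating on $Y = Y^{1,0} + Y^{0,1}$ yields
\[
V''[J]\, Y \;=\; V''[J]'\, Y^{0,1} \;+\; V''[J]''\, Y^{1,0},
\]
a $(1,0)$-vector plus a $(0,1)$-vector. Hence the bracket is $(0,1)$ with respect to $J_\sigma$ for every real $Y$ precisely when $V''[J]' = 0$.

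It remains to match this with the condition stated in the definition. Expanding $V[J] = V'[J] + V''[J]$ and projecting onto $T_\sigma \otimes \bar T_\sigma^*$ gives $V[J]' = V'[J]' + V''[J]'$, so the identity $V'[J] = V[J]'$ forces both $V'[J]'' = 0$ and $V''[J]' = 0$; likewise for $V''[J] = V[J]''$. For real $V$ the relation $\overline{V''[J]} = V'[J]$ pairs the two identities and shows they are all equivalent to the single condition $V''[J]' = 0$ derived above. Combining the three cases and invoking Newlander--Nirenberg yields the equivalence. The main obstacle I anticipate is the notational bookkeeping between the two uses of the prime/double-prime convention, one for the complex decomposition of the vector field $V$ and one for the type decomposition of the tensor $V[J]$; once these are carefully lined up via complex conjugation, the calculation is essentially a one-line Lie-bracket computation.
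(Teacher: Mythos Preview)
The paper does not give its own proof of this proposition; it is quoted verbatim as \cite[Proposition~6.2]{JEANLGML} and used as a black box. Your argument via Newlander--Nirenberg, reducing integrability to involutivity of the $(0,1)$-distribution and isolating the mixed bracket $[V'',Y''_\sigma]=\tfrac{i}{2}V''[J]\,Y$ as the only nontrivial case, is correct and is exactly the standard way this result is established. Your unpacking of the holomorphicity condition $V'[J]=V[J]'$ into $V'[J]''=0$ and $V''[J]'=0$, and the observation that for real $V$ these are complex conjugates of one another, is also correct; so the equivalence goes through in both directions as you claim.
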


The family of complex structures on $M'$ considered in \cite{hitchin2} see also \cite{JEANLGML}, \cite{Andersen2} and \cite{Andersen1}, given by the Hodge star, $-\star_\sigma$, $\sigma\in \mathcal{T}$, fulfills the requirements of the proposition with respect to the Atiyah-Bott symplectic form $\omega$ on $M'$. We will denote the complex structure which $\mathcal{T}\times M'$ has by $J$. 

\begin{prop}
\label{prop:Csstrucmodspace}
We have that the map
$$\Psi: (\T\times M', J) \ra \M^s$$
is complex analytic, e.g. $J$ is in fact the complex analytic structure this space gets from the Narasimhan-Seshadri diffeomorphism $\Psi$.
\end{prop}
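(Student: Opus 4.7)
The plan is to compare the two complex structures on $\M^s$---the one pulled back from $(\T\times M',J)$ via $\Psi$, and the one coming intrinsically from its structure as a moduli space of pairs $(X,E)$---by showing that the differential $d\Psi$ is complex linear at every point. Since $\Psi$ is a diffeomorphism and both structures are integrable (the former by the preceding proposition together with the stated fact that $-\star_\sigma$ yields a holomorphic family, the latter by construction of the moduli space), pointwise linear compatibility is sufficient.

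First I would identify $T_{\Psi(\sigma,[\rho_E])}\M^s$ using Kodaira--Spencer deformation theory: it is naturally $H^{0,1}(X,\mathcal{A}_E)$, where $\mathcal{A}_E$ is the Atiyah algebroid of $E$ over $X=X_{\rho_0}$, sitting in the exact sequence
\[ 0\to H^{0,1}(X,\text{End}E)\to H^{0,1}(X,\mathcal{A}_E)\to H^{0,1}(X,TX)\to 0\]
of complex vector spaces, in which the right-hand map is the differential of $\pi^s_\T$. The unitary connection associated to $\rho_E$ (the core datum of Narasimhan--Seshadri) provides a smooth real splitting of this sequence, and I would use it to express $d\Psi$ as mapping $T_{(\sigma,[\rho_E])}(\T\times M')=H^{0,1}(X,TX)\oplus H^{0,1}(X,\text{End}E)$ into the two summands of the split Atiyah sequence. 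On the vertical factor, the complex structure $-\star_\sigma$ on $M'$ corresponds, under the Dolbeault identification of unitary harmonic $1$-forms with $H^{0,1}(X,\text{End}E)$, to multiplication by $i$, which matches the complex structure induced from $\M^s$; this is the classical computation of \cite{hitchin2}. On the horizontal factor, the Teichm\"uller complex structure $I$ is multiplication by $i$ on $H^{0,1}(X,TX)$, matching the complex structure on the quotient $H^{0,1}(X,\mathcal{A}_E)/H^{0,1}(X,\text{End}E)$.

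The main obstacle is that the unitary splitting, and hence the horizontal complement $d\Psi(T_\sigma\T\oplus 0)\subset T\M^s$, need not be preserved by the intrinsic complex structure on $\M^s$: a priori the horizontal/vertical decomposition on the $\M^s$-side is only a real splitting, so checking complex linearity on each summand separately is not automatically enough. This is precisely the issue addressed by the preceding proposition: the integrability of the almost complex structure $J=I\oplus J_\sigma$ is equivalent to the family $J_\sigma$ being holomorphic in $\sigma$, and the paragraph preceding the statement records that the Hodge star family satisfies this condition. Integrability of $J$ translates, via the diffeomorphism $\Psi$ and the fact that $d\Psi$ is already complex linear on each factor of the Narasimhan--Seshadri splitting, into the statement that the full $(1,0)$-subspace of $T_{(\sigma,[\rho_E])}(\T\times M')$ is mapped onto the $(1,0)$-subspace of $T_{\Psi(\sigma,[\rho_E])}\M^s$. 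Thus $d\Psi$ is $\numbC$-linear everywhere, so $\Psi:(\T\times M',J)\to\M^s$ is holomorphic, proving $J$ agrees with the complex structure induced by $\Psi$.
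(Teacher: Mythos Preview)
Your framework via the Atiyah sequence is natural, and you have correctly located the crux: whether $d\Psi(T_\sigma\T\oplus 0)$ lands in a \emph{complex} subspace of $T\M^s$. But your resolution of this obstacle does not work. Integrability of $J$ is a statement purely about $(\T\times M',J)$; it says nothing about how $J$ compares to the pullback $\Psi^*J_{\M^s}$. Two integrable complex structures on the same manifold can share a vertical foliation, agree on the fibers, and project to the same structure on the base, yet still differ by choosing different complex horizontal complements. Your sentence ``integrability of $J$ translates \ldots into the statement that the full $(1,0)$-subspace \ldots is mapped onto the $(1,0)$-subspace of $T\M^s$'' thus assumes what is to be proved. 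Likewise, saying that on the horizontal factor $I$ matches ``the complex structure on the quotient $H^{0,1}(X,\mathcal{A}_E)/H^{0,1}(X,\text{End}E)$'' only shows that $\pi^s_\T\circ\Psi$ is holomorphic in the $\T$-direction, which is automatic; it does not control the lift.

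The paper supplies exactly the missing ingredient, by a different route: for each fixed $\rho_E$ it builds a genuine holomorphic family over $\T$ by taking the quotient of the trivial bundle $\T\times\tilde\Sigma\times\numbC^n$ by the $\tilde\pi_1(\Sigma)$-action (Teichm\"uller on the first two factors, $\rho_E$ on the last). The universal property of $\M^s$ then forces the classifying map $\iota_{\rho_E}:\T\to\M^s$, which is $\Psi(\,\cdot\,,\rho_E)$, to be \emph{holomorphic}. This is precisely the statement that the horizontal leaves are complex submanifolds of $\M^s$, hence that $d\Psi$ is $\numbC$-linear on $T_\sigma\T\oplus 0$. Combined with the fiberwise statement (which you handle correctly), the result follows. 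Your infinitesimal picture is compatible with this, but it needs this holomorphic-family input to close the gap; integrability alone cannot substitute for it.
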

\begin{proof}

In order to understand the complex structure of $\mathcal{T}\times M'$ from the algebraic geometric perspective we want to construct holomorphic horizontal sections of $\mathcal{T}\times M'\to\mathcal{T}$.  We will use the universal property of the space of holomorphic bundles to show that the sections $\mathcal{T}\to\mathcal{T}\times\{\rho_E\}\subset \mathcal{T}\times M'$ are holomorphic for all $[\rho_E]$ in $M'$.

Our first objective is to construct a holomorphic family of vector bundles over Teichm{\"u}ller space, where each bundle corresponds to the same unitary representation of $\tilde\pi_1(\Sigma)$. We start from the universal curve $\mathcal{T}\times \Sigma$ and its universal cover $\mathcal{T}\times \tilde\Sigma$. Both of these spaces are complex analytic, and we get the universal curve $\mathcal{T}\times \Sigma$ as the quotient of $\mathcal{T}\times \tilde\Sigma$ by the holomorphic $\pi_1(\Sigma)$ action.

 This allows us to construct the vector bundles over $\mathcal{T}$ as the sheaf theoretic quotient of
\[\mathcal{T}\times \tilde\Sigma\times \numbC^n\]
by the $\tilde \pi_1(\Sigma)$ action, given by the $\pi_1(\Sigma)$-action on 
$\mathcal{T}\times \tilde\Sigma$, and the unitary action on $\numbC^n$ given by our fixed representation $\rho_E:\tilde\pi_1(\Sigma)\to \mathbf{U}(n)$ (see \cite{MS} for details on this construction, where we simply just compose representations with the natural quotient map from $\pi_1(\Sigma - \{p\})$ to $\tilde\pi_1(\Sigma)$ to match up the setting of this paper to a special case of the setting in \cite{MS}).  The action is of course holomorphic, and so the quotient (fiberwise invariant sections over $\mathcal{T}$)  is a family of Riemann surfaces with a holomorphic vector bundle over it of rank $n$ and degree $d$. The universal property of $\mathcal{M}^s$ implies that this family therefore induced a holomorphic section
\begin{align*}
 \iota_{\rho_E}:\mathcal{T}&\to  \mathcal{M}^s
\end{align*}
by the universality of the moduli space $\mathcal{M}^s$. This shows that the horizontal sections are holomorphic submanifolds, and so the tangent space must split at every point as $I\oplus J_\sigma$. Here $J_\sigma$ must be $-\star_\sigma$ since it comes from the structure of the fibers. 
\end{proof}
The conclusion is, that the algebraic complex structure on the moduli space of pairs of a Riemann surface and a holomorphic vector bundle over it and the complex structure from \cite{JEANLGML} on $\T\times M'$ are the same.

\section{Coordinates for the Universal Moduli Space of Holomorphic Vector Bundles}
\label{sec:UniCoord}

In this section we prove Theorem \ref{thm:uni coord}.

We will need the composition of the map $ \Phi^{\mu\oplus\nu}$ with the projection on each of the two factors, which we denote as follows:
\begin{align*}
  \Phi_1^{\mu\oplus\nu}:\SetH\times \GL(n,\numbC)\to \SetH,\\\Phi_2^{\mu\oplus\nu}:\SetH\times \GL(n,\numbC)\to \GL(n,\numbC).
\end{align*}
In fact $\Phi_1^{\mu\oplus\nu}$ is the projection onto $\SetH$ followed by the induced map on the base by \eqref{eq:basemap} below.

The equation (\ref{eqM}) is equivalent to the following two equations on $\Phi_i^{\mu\oplus\nu}$:
\begin{align}
  \label{eq:ucoord1}
  \bar\partial_\SetH\Phi_1^{\mu\oplus\nu}(z,g)&=\mu\partial_\SetH\Phi_1^{\mu\oplus\nu}(z,g)\\
 \label{eq:ucoord2} 
\bar\partial_\SetH\Phi_2^{\mu\oplus\nu}(z,g)&=\mu\partial_\SetH\Phi_2^{\mu\oplus\nu}(z,g)+\partial_{\GL(n,\numbC)}\Phi_2^{\mu\oplus\nu}(z,g)\nu.
\end{align}
 since $\partial_{\GL(n,\numbC)}\Phi_1^{\mu\oplus\nu}(z,g)=0$. With this simplification the first equation is exactly Bers's equation for
 \begin{equation}
   \label{eq:basemap}
   \Phi_1^{\mu}(z) = \Phi_1^{\mu\oplus\nu}(z,g),
 \end{equation}
and so we can solve it using the techniques in \cite{bers}, and we obtain a Riemann surface $X_{\rho_\mu}$ corresponding to a representation $\rho_\mu$.

The second equation \eqref{eq:ucoord2} we solve in two steps. First, we identify $\nu$ with an endomorphism valued $1$-form using the standard identification of left invariant vector fields and elements of the Lie algebra. To solve the equation we consider the anti-holomorphic solution of the equation
\[ \bar\partial_\SetH\Phi_-^{\nu}(z,e)=\partial_{\GL(n,\numbC)}\Phi_-^{\nu}(z,g)(\nu)\vert_{g=e}  =\Phi_-^{\nu}(z,e)\cdot\nu\] 
and extend it equivariantly to the rest of $\SetH\times \GL(n,\numbC)$. We observe that $\partial_\SetH \Phi_-^{\nu}=0$ since it is anti-holomorphic. And so it follows, by adding $0$ to the defining equation of $\Phi_-^{\nu}$ that: 
\[ \bar\partial_\SetH\Phi_-^{\nu}(z,g)=\partial_{\GL(n,\numbC)}\Phi_-^{\nu}(z,g)(\nu)+\mu\partial_\SetH\Phi_-^{\nu}(z,g).\]
The vector bundle on $X_{\rho_\mu}$ corresponding to the representation \[\chi^{\nu}(\gamma)=\Phi_-^{\nu}(\rho_0(\gamma) z,e)\rho_{E}^{0\oplus0}(\gamma)(\Phi_-^{\nu}(z,e))^{-1}\] is stable, if $\mu\oplus\nu$ is small enough. This means, we can find a holomorphic gauge transformation on the universal cover of $X_{\rho_\mu}$, $\Phi_+^{\mu\oplus\nu}:\SetH\to\GL(n,\numbC)$,  such that \begin{equation}\label{eq:rhomunu}\rho_{E}^{{\mu\oplus\nu}}(\gamma)=\Phi_+^{\mu\oplus\nu}(\rho_\mu(\gamma) z)\chi^{\mu\oplus\nu}(\gamma)(\Phi_+^{\mu\oplus\nu}(z))^{-1}
\end{equation}
is an admissible $\mathbf{U}(n)$-representation and independent of $z$ by the Narasimhan-Seshadri theorem \cite{NandS}. Now we use the basemap to define $\tilde \Phi_+^{\mu\oplus\nu}=\Phi_+^{\mu\oplus\nu}\circ \Phi_1^{\mu}$.



The following computation shows that the map $\tilde\Phi_+^{\mu\oplus\nu}$ is in the kernel of $\bar\partial_\SetH-\mu\partial_\SetH$:

\begin{align*}
  (\bar\partial_\SetH-\mu\partial_\SetH) \tilde\Phi_+^{\mu\oplus\nu}&= (\bar\partial_{\SetH} \Phi_+^{\mu\oplus\nu})\circ\Phi_1^{\mu} \bar\partial_{\SetH}\bar\Phi_1^{\mu}+(\partial_{\SetH} \Phi_+^{\mu\oplus\nu})\circ\Phi_1^{\mu} \bar\partial_{\SetH}\Phi_1^{\mu} \\ &\quad -\mu(\bar\partial_{\SetH} \Phi_+^{\mu\oplus\nu})\circ\Phi_1^{\mu} \partial_{\SetH}\bar\Phi_1^{\mu}-\mu(\partial_{\SetH} \Phi_+^{\mu\oplus\nu})\circ\Phi_1^{\mu} \partial_{\SetH}\Phi_1^{\mu}
\end{align*}
We then use the differential equation $\bar\partial\Phi_1^{\mu}=\mu \partial\Phi_1^{\mu}$ and that $\bar\partial_\SetH\Phi^{\mu\oplus\nu}_+=0$ to get that
\begin{align*}
  (\bar\partial_\SetH-\mu\partial_\SetH) \tilde\Phi_+^{\mu\oplus\nu}=\partial_{\SetH} \Phi_+^{\mu\oplus\nu}\circ\Phi_1^{\mu} \mu\partial_{\SetH}\Phi_1^{\mu} -\mu\partial_{\SetH} \Phi_+^{\mu\oplus\nu}\circ\Phi_1^{\mu} \partial_{\SetH}\Phi_1^{\mu}=0.
\end{align*}
Define $\Phi_2^{\mu\oplus\nu}(z,g)=\tilde\Phi_+^{\mu\oplus\nu}(z,g)\Phi_-^{\nu}(z,g)$. We see that $\Phi_2^{\mu\oplus\nu}$ fulfills equation \eqref{eq:ucoord2} by the following calculation
\begin{align*}
  \bar\partial_\SetH \Phi_2^{\mu\oplus\nu}&=(\bar\partial_\SetH \tilde{\Phi}_+^{\mu\oplus\nu})(\Phi_-^{\nu})+( \tilde{\Phi}_+^{\mu\oplus\nu})(\bar\partial_\SetH \Phi_-^{\nu})\\&=(\bar\partial_\SetH \tilde{\Phi}_+^{\mu\oplus\nu})(\Phi_-^{\nu})+( \tilde{\Phi}_+^{\mu\oplus\nu})(\partial_{\GL(n,\numbC)} \Phi_-^{\nu}\nu)
\end{align*}
since $\tilde\Phi_+^{\mu\oplus\nu}\in\ker(\bar\partial_\SetH-\mu\partial_\SetH)$ we get that
\begin{equation*}
  \bar\partial_\SetH \Phi_2^{\mu\oplus\nu}=(\mu\partial_\SetH \tilde{\Phi}_+^{\mu\oplus\nu})(\Phi_-^{\nu})+( \tilde{\Phi}_+^{\mu\oplus\nu})(\partial_{\GL(n,\numbC)} \Phi_-^{\nu}\nu).
\end{equation*}
 To finish the calculation we use that $\Phi_+$ and $\Phi_1$ are independent of the $\GL(n,\numbC)$ factor, and therefore so is $\tilde\Phi^{\mu\oplus\nu}_+$. Also $\Phi_-^{\mu\oplus\nu}$ is antiholomorphic so we have that
\begin{align*}
  \bar\partial_\SetH \Phi_2^{\mu\oplus\nu}&=\mu\partial_\SetH (\tilde{\Phi}_+^{\mu\oplus\nu}\Phi_-^{\nu})+\partial_{\GL(n,\numbC)} (\tilde{\Phi}_+^{\mu\oplus\nu}\Phi_-^{\nu})\nu\\ &=\mu\partial_\SetH \Phi_2^{\mu\oplus\nu}+(\partial_{\GL(n,\numbC)}\Phi_2^{\mu\oplus\nu})\nu
\end{align*}
To show that we still get an admissible representation, we use that \eqref{eq:rhomunu} is independent of which $z$ we choose. This lets us conclude that
\begin{align*}
  \rho_E^{{\mu\oplus\nu}}(\gamma)=&\Phi_+^{\mu\oplus\nu}(\rho_\mu(\gamma) \Phi_1^{\mu}(z))\chi^{\mu\oplus\nu}(\gamma)(\Phi_+^{\mu\oplus\nu}(\Phi_1^{\mu}(z)))^{-1}\\ =& \Phi_+^{\mu\oplus\nu}(\Phi_1^{\mu}(\rho_0(\gamma)(\Phi_1^{\mu})^{-1}(\Phi_1^{\mu}( z))))\chi^{\mu\oplus\nu}(\gamma)(\Phi_+^{\mu\oplus\nu}(\Phi_1^{\mu}(z)))^{-1}\\ =& \tilde\Phi_+^{\mu\oplus\nu}(\rho_0(\gamma) z)\chi^{\mu\oplus\nu}(\gamma)(\tilde\Phi_+^{\mu\oplus\nu}(z))^{-1},
\end{align*}
and so
\[ \rho_E^{{\mu\oplus\nu}}(\gamma)=\Phi_2^{\mu\oplus\nu}(\rho_0(\gamma)z,g)\rho_E^{0\oplus 0}(\gamma)(\Phi_2^{\mu\oplus\nu}(z,g))^{-1}\] 
is an admissible $\mathbf{U}(n)$-representation. Finally, the requirement that $\Phi_2^{\mu\oplus\nu}(z_0,e)$ is a positive definite matrix of determinant $1$ fixes all remaining indeterminacy as in \cite{ZTVB}.

\subsection{The Tangent Map from Kodaira-Spencer Theory}
\label{sec:kodbtz}
We will now analyse the tangential map of our coordinates. The only problematic part is what happens in the tangent directions parallel to the fibers. We can calculate the Kodaira-Spencer map of the family of representations $\rho_E^{\mu\oplus\nu+t\tilde\mu\oplus\tilde\nu}, t\in\numbC$. However, to ease the computation we first prove the following lemma.

\begin{lemma}
  \label{lem:easecomp}
We let $X_{\rho_0}$ be a Riemann surface and $\rho_0$ the corresponding representation of $\pi_1(X_{\rho_0})$. For a family of representations of $R_t:\tilde \pi_1(X_{\rho_0})\to \mathbf{U}(n)$, where 
  $$R_t(\gamma)=\Upsilon(t,\rho_0(\gamma) z)\rho_E(\gamma)\Upsilon(t,z)^{-1}$$
  with both $\rho_0$ and $\rho_E$ independent of $t$ and $ \Upsilon$ any smooth map $$\Upsilon:\numbC\times\SetH\to\GL(n,\numbC),$$ we have that the Kodaira-Spencer class's harmonic representative of the family $R_t$ at $t=0$ is:
$$ P^{0,1}_{\rho_0,E}\left(\text{Ad}\Upsilon(0,z)\left(\left.\frac{d}{dt}\right\vert_{t=0} \Upsilon(t,z)^{-1}\bar\partial_{\SetH} \Upsilon(t,z)\right)\right)\in H^{0,1}(X_0,\text{End}E_{R_0}).$$
Here $P^{0,1}_{\rho_0,E}$ denotes the projection on the harmonic forms on $X_{\rho_0}$ with values in $\textnormal{End}E_{R_0}$.
\end{lemma}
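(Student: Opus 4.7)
My plan is to compute the Kodaira-Spencer representative of the family $\{R_t\}$ at $t=0$ by pulling back the varying holomorphic structures on $E_{R_t}$ to the fixed smooth bundle $E_{R_0}$ via an explicit gauge transformation, and then identifying the resulting representative with the expression in the lemma by a short Leibniz computation. The natural gauge is
\[ g_t := \Upsilon(t,\cdot)\,\Upsilon(0,\cdot)^{-1}:\SetH\to\GL(n,\numbC), \]
which satisfies the intertwining relation $g_t(\rho_0(\gamma)z)\,R_0(\gamma) = R_t(\gamma)\,g_t(z)$ as a direct consequence of the defining formula for $R_t$ (together with the fact that $\rho_0$ and $\rho_E$ are $t$-independent). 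Since $g_0 = \Id$, this realises $g_t$ as a family of smooth bundle isomorphisms $E_{R_0}\to E_{R_t}$ starting at the identity.

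Next I would pull back the holomorphic structure. The holomorphic structure on $E_{R_t}$ is induced by the standard $\bar\partial_\SetH$ on the universal cover, so its pullback by $g_t$ is the operator $\bar\partial_\SetH + g_t^{-1}\bar\partial_\SetH g_t$ on sections of $E_{R_0}$. Differentiating the intertwining relation in $z$ and using that $\rho_0(\gamma)$ is biholomorphic shows that $B_t := g_t^{-1}\bar\partial_\SetH g_t$ is $R_0$-equivariant and so descends to a smooth $(0,1)$-form on $X_{\rho_0}$ with values in $\text{End}E_{R_0}$. The Kodaira-Spencer class at $t=0$ is then represented by
\[ \dot B_0 \;=\; \left.\frac{d}{dt}\right|_{t=0}\bigl(g_t^{-1}\bar\partial_\SetH g_t\bigr) \;=\; \bar\partial_\SetH\!\left(\dot\Upsilon(0,z)\,\Upsilon(0,z)^{-1}\right), \]
the last equality following because $g_0 = \Id$ collapses the Leibniz expansion to $\bar\partial_\SetH\dot g_0$, with $\dot g_0 = \dot\Upsilon(0,z)\,\Upsilon(0,z)^{-1}$.

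To finish I would match this with the expression in the lemma using the Leibniz rule and $\bar\partial_\SetH(\Upsilon_0^{-1}) = -\Upsilon_0^{-1}\bar\partial_\SetH\Upsilon_0\,\Upsilon_0^{-1}$, where $\Upsilon_0 = \Upsilon(0,z)$ and $\dot\Upsilon = \partial_t|_{t=0}\Upsilon(t,z)$:
\begin{align*}
\bar\partial_\SetH\bigl(\dot\Upsilon\,\Upsilon_0^{-1}\bigr)
&= \bar\partial_\SetH\dot\Upsilon\,\Upsilon_0^{-1} - \dot\Upsilon\,\Upsilon_0^{-1}\bar\partial_\SetH\Upsilon_0\,\Upsilon_0^{-1}\\
&= \Upsilon_0\bigl(\Upsilon_0^{-1}\bar\partial_\SetH\dot\Upsilon - \Upsilon_0^{-1}\dot\Upsilon\,\Upsilon_0^{-1}\bar\partial_\SetH\Upsilon_0\bigr)\Upsilon_0^{-1}\\
&= \text{Ad}\,\Upsilon_0\!\left(\left.\frac{d}{dt}\right|_{t=0}\Upsilon(t,z)^{-1}\bar\partial_\SetH\Upsilon(t,z)\right),
\end{align*}
and apply $P^{0,1}_{\rho_0,E}$ to both sides. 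The main obstacle I anticipate is bookkeeping the equivariance: although $\dot g_0$ itself is not $R_0$-equivariant (since differentiating $g_t(\rho_0(\gamma)z) = R_t(\gamma)g_t(z)R_0(\gamma)^{-1}$ at $t=0$ produces a $\dot R_0(\gamma)R_0(\gamma)^{-1}$ correction), the form $\bar\partial_\SetH\dot g_0$ is equivariant because it arises as $\dot B_0$ and every $B_t$ is equivariant. Once this point is clarified, the remainder is the algebraic identity above together with the Hodge-theoretic fact that $P^{0,1}_{\rho_0,E}$ sends any Dolbeault representative of a class to its harmonic representative.
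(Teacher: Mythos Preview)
Your proof is correct. It differs from the paper only in framing: the paper computes $\left.\frac{d}{dt}\right|_{t=0}R_t(\gamma)$ directly as a group $1$-cocycle, identifies $\dot g_0 = \left.\frac{d}{dt}\right|_{t=0}\Upsilon(t,z)\Upsilon(0,z)^{-1}$ as the {\v C}ech $0$-cochain trivialising it (so that $\delta\dot g_0$ recovers the cocycle up to the $R_0(\gamma)$ factor), and then applies $\bar\partial_\SetH$ to pass to Dolbeault cohomology. You instead work in the gauge-theoretic picture, pulling back the family of $\bar\partial$-operators through $g_t$ and differentiating $g_t^{-1}\bar\partial_\SetH g_t$ at $t=0$. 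Both routes land on the same expression $\bar\partial_\SetH\dot g_0$ and perform the identical Leibniz manipulation to reach the $\text{Ad}\,\Upsilon_0$ form; the {\v C}ech step in the paper makes the link to the group-cohomology description of the deformation explicit, while your approach is slightly more direct and sidesteps that translation. Your remark about equivariance of $\dot B_0$ versus non-equivariance of $\dot g_0$ is exactly the Dolbeault counterpart of the paper's observation that $\dot g_0$ is only a $0$-cochain, not a global section.
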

\begin{proof}
  To compute the Kodaira-Spencer map we first consider $\left.\frac{d}{dt}\right\vert_{t=0} R_t$ and note, this is an element of $H^1(X,\text{End}(E))$. However, this cohomology group is isomorphic to $H^{0,1}(X,\text{End}E)$. The isomorphism is constructed by finding a {\v C}ech chain  with values in the sheaf $\Omega^1(\text{End}E)$, say $\varphi_i$,  such that 
  $$\delta^*(\phi)_{ij}=\phi_i-\phi_j =\left.\frac{d}{dt}\right\vert_{t=0} R_t(\gamma_{ij}),$$ 
  for open sets $U_i\cap U_j\neq\emptyset$ which are related by the transformation $\gamma_{ij}\in\tilde\pi_1(\Sigma)$ on the universal cover. Once $\phi_i$ has been found, $P_{\rho_0,E}^{0,1}(\bar\partial_\SetH \phi_i)$ will give a harmonic representative of the Kodaira-Spencer class.

We can now calculate that
\begin{align*}
  \left.\frac{d}{dt}\right\vert_{t=0}R_t(\gamma_{ij})&=\left.\frac{d}{dt}\right\vert_{t=0}\Upsilon(t,\rho_0(\gamma) z)\rho_E(\gamma)\Upsilon(t,z)^{-1}\\ &=\left.\frac{d}{dt}\right\vert_{t=0}\Upsilon(t,\rho_0(\gamma) z)\rho_E(\gamma)\Upsilon(0,z)^{-1}\\ &\quad+\Upsilon(0,\rho_0(\gamma) z)\rho_E(\gamma)\left.\frac{d}{dt}\right\vert_{t=0}\Upsilon(t,z)^{-1}
\\ &= \left.\frac{d}{dt}\right\vert_{t=0}(\Upsilon(t,\rho_\SetH(\gamma_{ij})z)\Upsilon(0,\rho_0(\gamma_{ij})z)^{-1})R_0(\gamma_{ij})\\ &\quad-R_0(\gamma_{ij})\left.\frac{d}{dt}\right\vert_{t=0}(\Upsilon(t,z)\Upsilon(0,z)^{-1})\\ &=R_0(\gamma_{ij})\delta(\left.\frac{d}{dt}\right\vert_{t=0}(\Upsilon(t,z)\Upsilon(0,z)^{-1}))_{ij}
\end{align*}
The Kodaira-Spencer class is then:
\begin{align*}
  \bar\partial_{\SetH} \left.\frac{d}{dt}\right\vert_{t=0}(\Upsilon(t,z)\Upsilon(0,z)^{-1}) =\text{Ad}\Upsilon(0,z)\left(\left.\frac{d}{dt}\right\vert_{t=0} \Upsilon(t,z)^{-1}\bar\partial_{\SetH} \Upsilon(t,z)\right).
\end{align*}
We compose with the harmonic projection to get the harmonic representative.
\end{proof}
We have the following proposition.
\begin{prop}\label{prop:KSuniv}
  The Kodaira-Spencer   map of $\rho_E^{\mu\oplus\nu+t\tilde\mu\oplus\tilde\nu}, t\in\numbC$ at $\mu\oplus\nu\in H^{0,1}(X,TX)\oplus H^{0,1}(X,\textnormal{End}E)$,
  $$KS_{\mu\oplus\nu}: H^{0,1}(X,TX)\oplus H^{0,1}(X,\textnormal{End}E)\to H^{0,1}(X_{\rho_\mu},TX_{\rho_\mu})\oplus H^{0,1}(X_{\rho_\mu},\textnormal{End}E_{\rho_E^{\mu\oplus\nu}})$$ 
  is given by
  \begin{multline*}
    KS_{\mu\oplus\nu}(\tilde\mu\oplus\tilde\nu)=P_{\mu}\tilde\mu^{\mu}\oplus P_{\mu\oplus\nu}^{0,1}\left((\Phi_1^{\mu})_*^{-1}\left(\text{Ad}\Phi_2^{\mu\oplus\nu}\left(\tilde\mu(\Phi_2^{\mu\oplus\nu})^{-1}\partial\Phi_2^{\mu\oplus\nu}+\tilde\nu\right)\right)\right).
  \end{multline*}
Here $\tilde\mu^{\mu}=(\frac{\tilde\mu}{1-\vert{\mu}\vert^2}\frac{\partial \Phi_1^{\mu}}{\overline{\partial \Phi_1^{\mu}}})\circ (\Phi_1^{\mu})^{-1}$ and $P_{\mu}$ and $P_{\mu\oplus\nu}^{0,1}$ are the $L^2$-projections on the harmonic forms $H^{0,1}(X_{\rho_\mu},TX_{\rho_\mu})$ respectively $H^{0,1}(X_{\rho_\mu},\textnormal{End}E_{\rho_E^{\mu\oplus\nu }})$.
\end{prop}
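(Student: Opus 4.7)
The plan is to split $KS_{\mu\oplus\nu}$ according to the two direct summands of its target and treat each component separately.

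For the first component, lying in $H^{0,1}(X_{\rho_\mu},TX_{\rho_\mu})$, the subfamily $\{\rho_{\mu+t\tilde\mu}\}_{t\in\numbC}$ arises from solving Bers' equation with Beltrami differential $\mu+t\tilde\mu$, so its Kodaira-Spencer class is the Beltrami of the quasiconformal isotopy $\Phi_1^{\mu+t\tilde\mu}\circ(\Phi_1^\mu)^{-1}\colon X_{\rho_\mu}\to X_{\rho_{\mu+t\tilde\mu}}$ at $t=0$. The standard change-of-Beltrami identity from \cite{bers} produces precisely $\tilde\mu^\mu$ on $X_{\rho_\mu}$, whose harmonic projection is $P_\mu\tilde\mu^\mu$. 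That gives the first summand.

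For the second component, the plan is to apply Lemma \ref{lem:easecomp} to the family $R_t=\rho_E^{\mu\oplus\nu+t\tilde\mu\oplus\tilde\nu}$, viewed after transport by $\Phi_1^\mu$ as a family of $\U(n)$-representations of $\tilde\pi_1(X_{\rho_\mu})$, with gauge $\Upsilon(t,z):=\Phi_2^{\mu\oplus\nu+t\tilde\mu\oplus\tilde\nu}(z,e)$ and fixed inner representation $\rho_E^{0\oplus 0}$. Evaluating \eqref{eq:ucoord2} at $g=e$, and using that $\nu$ acts as a left-invariant vector field on $\GL(n,\numbC)$, we get
\[ \bar\partial_\SetH\Upsilon(t,z)=(\mu+t\tilde\mu)\partial_\SetH\Upsilon(t,z)+\Upsilon(t,z)(\nu+t\tilde\nu), \]
so that $\Upsilon(t,z)^{-1}\bar\partial_\SetH\Upsilon(t,z)=\Upsilon(t,z)^{-1}(\mu+t\tilde\mu)\partial_\SetH\Upsilon(t,z)+(\nu+t\tilde\nu)$. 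Differentiating at $t=0$, applying $\text{Ad}\Phi_2^{\mu\oplus\nu}$, and using that the scalars $\mu,\tilde\mu$ commute through the adjoint, the expression provided by the lemma becomes
\[ \text{Ad}\Phi_2^{\mu\oplus\nu}\bigl(\tilde\mu(\Phi_2^{\mu\oplus\nu})^{-1}\partial\Phi_2^{\mu\oplus\nu}+\tilde\nu\bigr)+\mu\,\partial\bigl(\dot\Upsilon(\Phi_2^{\mu\oplus\nu})^{-1}\bigr), \]
where $\dot\Upsilon:=\partial_t\Upsilon(t,z)$ at $t=0$. The first summand is precisely the expression appearing in the proposition.

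It remains to argue that the remainder $\mu\,\partial(\dot\Upsilon(\Phi_2^{\mu\oplus\nu})^{-1})$ is annihilated by the harmonic projection $P_{\mu\oplus\nu}^{0,1}$. Writing it as $\bar\partial(\dot\Upsilon(\Phi_2^{\mu\oplus\nu})^{-1})-(\bar\partial-\mu\partial)(\dot\Upsilon(\Phi_2^{\mu\oplus\nu})^{-1})$, one observes that once pushed forward by $(\Phi_1^\mu)_*^{-1}$ to $X_{\rho_\mu}$, the operator $\bar\partial-\mu\partial$ is, up to a smooth scalar factor, the Dolbeault operator on $X_{\rho_\mu}$, so the second piece is $\bar\partial_{X_{\rho_\mu}}$-exact; the first piece is the \v{C}ech-Dolbeault gauge ambiguity in the smooth representative produced in the proof of Lemma \ref{lem:easecomp}, and it too is killed by harmonic projection. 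Composing with $(\Phi_1^\mu)_*^{-1}$ then gives exactly the formula stated. The main obstacle is this last step: the lemma is stated using the uniformization by $\rho_0$ while the target harmonic projection lives on $X_{\rho_\mu}$, so the careful bookkeeping is to transport the Dolbeault operator and the harmonic projection through the quasiconformal change-of-uniformization $\Phi_1^\mu$ and verify that the residual $\mu\partial$-terms are indeed exact in the complex structure of $X_{\rho_\mu}$.
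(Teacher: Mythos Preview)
Your overall strategy---splitting into Teichm\"uller and bundle components and invoking Lemma~\ref{lem:easecomp} for the latter---matches the paper's. The Teichm\"uller component is fine. The bundle component, however, has a real gap in the final step, precisely where you flag it as the ``main obstacle.''

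The issue is this. You apply Lemma~\ref{lem:easecomp} with $\Upsilon(t,z)=\Phi_2^{\mu\oplus\nu+t\tilde\mu\oplus\tilde\nu}(z,e)$, but this $\Upsilon$ satisfies $R_t(\gamma)=\Upsilon(t,\rho_0(\gamma)z)\rho_E^{0\oplus 0}(\gamma)\Upsilon(t,z)^{-1}$ with $\rho_0$, \emph{not} $\rho_\mu$, playing the role of the base representation. So the lemma, as you have set it up, is being applied on $X_{\rho_0}$, and its output $\bar\partial_\SetH(\dot\Upsilon\,\Phi_2^{-1})$ is a Dolbeault representative for the wrong complex structure. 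Your subsequent decomposition $\mu\partial(\dot\Upsilon\Phi_2^{-1})=\bar\partial(\dot\Upsilon\Phi_2^{-1})-(\bar\partial-\mu\partial)(\dot\Upsilon\Phi_2^{-1})$ then claims both pieces are killed by $P^{0,1}_{\mu\oplus\nu}$. But neither piece is globally exact: $\dot\Upsilon\Phi_2^{-1}$ is only a \v{C}ech $0$-cochain (its coboundary is the constant cocycle $\dot R_0(\gamma)R_0(\gamma)^{-1}$), so $(\bar\partial-\mu\partial)(\dot\Upsilon\Phi_2^{-1})$, pushed to $X_{\rho_\mu}$, is only \emph{locally} $\bar\partial_{X_{\rho_\mu}}$-exact. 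In fact it equals your ``main term'' $\text{Ad}\Phi_2^{\mu\oplus\nu}(\tilde\mu\Phi_2^{-1}\partial\Phi_2+\tilde\nu)$ identically, so if it were annihilated by the projection the entire Kodaira--Spencer class would vanish.

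The paper avoids this trap by precomposing with $(\Phi_1^\mu)^{-1}$ \emph{before} invoking the lemma: one sets $\Upsilon(t,z)=\Phi_2^{\mu\oplus\nu+t\tilde\mu\oplus\tilde\nu}\bigl((\Phi_1^\mu)^{-1}(z),e\bigr)$, which genuinely satisfies the lemma's hypothesis on $X_{\rho_\mu}$. Then in the chain-rule computation of $\Upsilon^{-1}\bar\partial_{X_{\rho_\mu}}\Upsilon$, the identity $\bar\partial(\Phi_1^\mu)^{-1}=-\mu\circ(\Phi_1^\mu)^{-1}\,\bar\partial\overline{(\Phi_1^\mu)^{-1}}$ makes the $\mu\partial\Phi_2$ contribution cancel exactly, so the formula $(\Phi_1^\mu)_*^{-1}\text{Ad}\Phi_2^{\mu\oplus\nu}(\tilde\mu\Phi_2^{-1}\partial\Phi_2+\tilde\nu)$ drops out with no remainder to dispose of. That precomposition is the missing idea in your argument.
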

\begin{proof}
By using that the defining equation (\ref{eqeq}) for $\rho_E^{\mu\oplus\nu+t\tilde\mu\oplus\tilde\nu}$ is independent of $z$, we get that
\begin{align*}
  \rho_E^{\mu\oplus\nu+t\tilde\mu\oplus\tilde\nu}& =\Phi_2^{\mu\oplus\nu+t\tilde\mu\oplus\tilde\nu}( \rho_{0}(\gamma) z),e)\rho_E^{0\oplus0}(\gamma) \Phi_2^{\mu\oplus\nu}(z,e)^{-1}\\ &=\Phi_2^{\mu\oplus\nu+t\tilde\mu\oplus\tilde\nu}((\Phi_1^{\mu})^{-1}( \rho_{\mu}(\gamma) z),e)\rho_E^{0\oplus0}(\gamma)\\ &\qquad \cdot  \Phi_2^{\mu\oplus\nu}((\Phi_1^{\mu})^{-1}(z),e)^{-1}.
\end{align*}
And so to find the Kodaira-Spencer class, by Lemma \ref{lem:easecomp} we only need to calculate:
\begin{align}
\nonumber \text{Ad}\Phi_2^{\mu\oplus\nu}&\circ(\Phi_1^{\mu})^{-1}\frac{d}{dt}\vert_{t=0}(\Phi_2^{\mu\oplus\nu+t\tilde\mu\oplus\tilde\nu}\circ(\Phi_1^{\mu})^{-1})^{-1}\bar\partial(\Phi_2^{\mu\oplus\nu+t\tilde\mu\oplus\tilde\nu}\circ(\Phi_1^{\mu})^{-1})\\ =&\text{Ad}\Phi_2^{\mu\oplus\nu}\circ(\Phi_1^{\mu})^{-1} \nonumber\\ &\cdot\frac{d}{dt}\vert_{t=0}((t\tilde\mu(\Phi_2^{\mu\oplus\nu+t\tilde\mu\oplus\tilde\nu})^{-1}\partial\Phi_2^{\mu\oplus\nu+t\tilde\mu\oplus\tilde\nu})\circ\Phi_1^{\mu})^{-1}\bar\partial(\bar\Phi_1^{\mu})^{-1}\nonumber\\ &+\text{Ad}\Phi_2^{\mu\oplus\nu}\circ(\Phi_1^{\mu})^{-1}\frac{d}{dt}\vert_{t=0}(\nu+t\tilde\nu)\circ(\Phi_1^{\mu})^{-1}\bar\partial(\bar\Phi_1^{\mu})^{-1}\nonumber\\ =&(\Phi_1^{\mu})_*^{-1}\left(\text{Ad}\Phi_2^{\mu\oplus\nu}\left(\tilde\mu(\Phi_2^{\mu\oplus\nu})^{-1}\partial\Phi_2^{\mu\oplus\nu}+\tilde\nu\right)\right).\label{KSunivcoord}
\end{align}
Now to get the Kodaira-Spencer map we project on the harmonic $(0,1)$-forms and remark that in the Teichm{\"u}ller directions we can apply the usual arguments from the classical case of Bers's coordinates.
\end{proof}
We see the map is injective and complex linear in both $\tilde\mu$ and $\tilde\nu$. Since we know $\mathcal{T}\times M'$ is a manifold the Implicit Function Theorem now implies that the coordinates we constructed are in fact holomorphic coordinates in a small neighbourhood. This completes the proof of Theorem \ref{thm:holafcord}.

\section{The Fibered Coordinates}
\label{sec:Fused}

In this section we will fuse Zograf and Takhtadzhyan's coordinates with Bers's coordinates in a kind of fibered manner in order also to produce coordinates on $\mathcal{T}\times M'$, which are complex analytic with respect to $J$.

Since we trough any stable bundle have a copy of $\mathcal{T}$ embedded as a complex submanifold,  we can construct fibered coordinates, once we identify the tangent spaces in the fiber direction locally along these copies of $\mathcal{T}$. We identify them by the maps 
\[ H^{0,1}(X_{\rho_0},\text{End}E_{\rho_E^{0}})\ni \nu\to\nu^\mu=P^{0,1}_{\mu} ((\Phi_1^\mu)_*^{-1}(\nu))\in H^{0,1}(X_{\rho_\mu},\text{End}E_{\rho_E^{0^\mu}}).\] 
This identification gives us coordinates taking $(\mu,\nu)$ to \[(\rho_\mu,\rho_E^{{\nu^\mu}})=\left(\Phi_1^\mu\circ \rho_0(\gamma)\circ (\Phi_1^\mu)^{-1},f^{\nu^\mu}(\rho_\mu(\gamma) z)\rho_{E}^{0^0}(\gamma)(f^{\nu^\mu}(z))^{-1}\right).\] These are complex coordinates, since $\nu^\mu$ are local holomorphic sections of the tangent bundle.

Before we calculate the Kodaira-Spencer maps for these coordinate curves, we will need to understand the derivatives of $(\Phi_1^\mu)^{-1}$.
\begin{lemma}
\label{lem:idenditeise}
We have the following two identities for $(\Phi_1^\mu)^{-1}:\SetH\to\SetH$
\begin{align}\label{eq:andet led}\bar\partial(\Phi_1^{\mu})^{-1}&= -\mu\circ(\Phi_1^{\mu})^{-1}\bar\partial\overline{(\Phi_1^{\mu})^{-1}}.
\\
  \label{eq:overlines}
\bar\partial\overline{(\Phi_1^{\mu})^{-1}}&=\left(\frac{1}{1-\vert\mu\vert^2}\frac{1}{\overline{\partial \Phi_1^\mu}}\right)\circ (\Phi_1^\mu)^{-1}.\end{align}
\end{lemma}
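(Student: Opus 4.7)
The plan is to differentiate the two tautological identities $f\circ g = \mathrm{id}$ and $\overline{f\circ g} = \overline{\mathrm{id}}$ (writing $f = \Phi_1^\mu$ and $g = (\Phi_1^\mu)^{-1}$) using the Wirtinger chain rule, and then combine the resulting equations with the Beltrami equation $\bar\partial f = \mu\,\partial f$ from item (1) of Theorem~\ref{thm:uni coord} to solve the $2\times 2$ linear system for the unknowns $\bar\partial g$ and $\bar\partial \bar g$.

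First, I would apply $\bar\partial_z$ to $f(g(z,\bar z)) = z$. Since $g$ depends non-holomorphically on $z$, the chain rule yields $(\partial f)\!\circ\! g\cdot \bar\partial g + (\bar\partial f)\!\circ\! g\cdot \bar\partial \bar g = 0$. Substituting $\bar\partial f = \mu\,\partial f$ and dividing by the nowhere-vanishing $(\partial f)\!\circ\! g$ gives \eqref{eq:andet led} directly.

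Next, I would apply $\bar\partial_z$ to the complex-conjugate identity $\overline{f(g(z,\bar z))} = \bar z$. Writing the left side as $\overline{K}$ with $K = f\circ g$ and using $\bar\partial \overline{K} = \overline{\partial K}$, one expands $\partial K$ by the chain rule, conjugates, and applies the Beltrami equation once more. This produces the companion relation $\overline{\partial f}\!\circ\! g\cdot \bar\partial \bar g + \bar\mu\,\overline{\partial f}\!\circ\! g\cdot \bar\partial g = 1$. Substituting the identity $\bar\partial g = -\mu\!\circ\! g\cdot \bar\partial \bar g$ from the previous step, the two $\bar\partial \bar g$ contributions combine with an overall factor of $(1-|\mu|^2)\!\circ\! g$, and solving for $\bar\partial \bar g$ delivers \eqref{eq:overlines}.

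The only delicate point is tracking the Wirtinger chain rule carefully for a non-holomorphic composition, and noticing that differentiating the identity $f\circ g = z$ by itself produces a degenerate linear system for $(\bar\partial g,\bar\partial\bar g)$; adjoining the complex-conjugate identity is what makes the system non-degenerate, its determinant being $|\partial f\!\circ\! g|^2(1-|\mu|^2)\!\circ\! g$, i.e.\ the reciprocal of the Jacobian of $f$ pulled back to $z$. Once both equations are written down, the remaining elimination is routine.
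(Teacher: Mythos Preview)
Your proposal is correct and follows essentially the same approach as the paper. The paper obtains \eqref{eq:andet led} exactly as you do, by applying $\bar\partial$ to $\Phi_1^\mu\circ(\Phi_1^\mu)^{-1}=z$ and using the Beltrami equation; for \eqref{eq:overlines} the paper instead applies $\partial$ to the same identity, substitutes the conjugate of \eqref{eq:andet led}, and conjugates at the end, which is the same computation as your $\bar\partial$ applied to the conjugate identity, just with the conjugation moved to a different step. (One small slip: the determinant $|\partial f\circ g|^2(1-|\mu|^2)\circ g$ you write down is the Jacobian of $f$ evaluated at $g$, not its reciprocal; this does not affect the argument.)
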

\begin{proof}
We consider the identity $\Phi_1^\mu\circ (\Phi_1^\mu)^{-1}(z)=z$. And we use the differential equation for $\Phi_1^\mu$ which is $$\bar\partial\Phi^\mu_1=\mu\partial\Phi^\mu_1$$ to calculate:
\begin{align*}
 0= \bar\partial( \Phi_1^{\mu}\circ(\Phi_1^{\mu})^{-1})&= (\bar\partial\Phi_1^{\mu})\circ(\Phi_1^{\mu})^{-1}\bar\partial\overline{(\Phi_1^{\mu})^{-1}}+(\partial\Phi_1^{\mu})\circ(\Phi_1^{\mu})^{-1}\bar\partial(\Phi_1^{\mu})^{-1}\\ &= (\mu\partial\Phi_1^{\mu})\circ(\Phi_1^{\mu})^{-1}\bar\partial\overline{(\Phi_1^{\mu})^{-1}}+(\partial\Phi_1^{\mu})\circ(\Phi_1^{\mu})^{-1}\bar\partial(\Phi_1^{\mu})^{-1}.
\end{align*}
Now $\partial\Phi_1^{\mu}\neq0$ for $\mu$ small, since $\Phi_1$ is a continuous perturbation of the identity map $\Id:\SetH\to\SetH$.  We then calculate that \begin{equation*} -\mu\circ(\Phi_1^{\mu})^{-1}\bar\partial\overline{(\Phi_1^{\mu})^{-1}}=\bar\partial(\Phi_1^{\mu})^{-1},
\end{equation*}
which is (\ref{eq:andet led}). We can use (\ref{eq:andet led}) to describe $\bar\partial\overline{(\Phi_1^{\mu})^{-1}}$ by differentiating $\Phi_1^{\mu}\circ(\Phi_1^{\mu})^{-1}(z)=z$:
\begin{align*}
  1&=\partial( \Phi_1^{\mu}\circ(\Phi_1^{\mu})^{-1})= (\bar\partial\Phi_1^{\mu})\circ(\Phi_1^{\mu})^{-1}\partial\overline{(\Phi_1^{\mu})^{-1}}+(\partial\Phi_1^{\mu})\circ(\Phi_1^{\mu})^{-1}\partial(\Phi_1^{\mu})^{-1}\\ &=
-\mu(\partial\Phi_1^{\mu})\circ(\Phi_1^{\mu})^{-1}\bar\mu\bar\partial\overline{(\Phi_1^{\mu})^{-1}}+(\partial\Phi_1^{\mu})\circ(\Phi_1^{\mu})^{-1}\partial(\Phi_1^{\mu})^{-1}\\ &=((1-\vert\mu\vert^2)(\partial\Phi_1^{\mu}))\circ(\Phi_1^{\mu})^{-1}\partial(\Phi_1^{\mu}))^{-1},
\end{align*}
and so conjugating and isolating $\bar\partial\overline{(\Phi_1^{\mu})^{-1}}$ we find:
\begin{equation*}
\bar\partial(\overline{\Phi_1^{\mu}})^{-1}=\left(\frac{1}{1-\vert\mu\vert^2}\frac{1}{\overline{\partial \Phi_1^\mu}}\right)\circ (\Phi_1^\mu)^{-1}\end{equation*} 
which proves (\ref{eq:overlines}).
\end{proof}
Let $\kappa_\mu$ be an $(n,m)$-tensor with values in the holomorphic bundle $E_{\rho_E^{0^\mu}}$ on the Riemann surface $X_{\rho_\mu}$ i.e.  
$$\kappa_\mu \in C^\infty(X_{\rho_\mu}, T^{-n}X_{\rho_\mu}\otimes \overline{T}^{-m}X_{\rho_\mu}\otimes \text{End}{E_{\rho_E^{0^\mu}}}).$$ 
Then we have that $(\Phi_1^\mu)_*(\kappa_\mu)=(\kappa_\mu\circ\Phi_1^\mu)(\partial \Phi_1^\mu)^n(\overline{\partial \Phi_1^\mu})^m$ and so $$(\Phi_1^\mu)_*^{-1}(\kappa_0)=(\kappa_0\circ(\Phi_1^\mu)^{-1})(\partial \Phi_1^\mu)^{-n}(\overline{\partial \Phi_1^\mu})^{-m}.$$ We have the families of unbounded operators
\begin{align*}
  \bar\partial_{\mu,E_{\rho_E^{0^\mu}}}: L^2(X_{\rho_\mu},\text{End}{E_{\rho_E^{0^\mu}}})&\to L^2(X_{\rho_\mu},T^{0,1}\otimes\text{End}{E_{\rho_E^{0^\mu}}}), \\ \bar\partial_{\mu,E_{\rho_E^{0^\mu}}}^*:L^2(X_{\rho_\mu},T^{0,1}\otimes\text{End}{E_{\rho_E^{0^\mu}}})&\to L^2(X_{\rho_\mu},\text{End}{E_{\rho_E^{0^\mu}}})\\ \Delta_{\mu,E_{\rho_E^{0^\mu}}}=\bar\partial^*\bar\partial:L^2(X_{\rho_\mu},\text{End}{E_{\rho_E^{0^\mu}}})&\to L^2(X_{\rho_\mu},\text{End}{E_{\rho_E^{0^\mu}}}),
\end{align*}
and the finite range operator
\begin{align*}P^{0,1}_{\mu,E_{\rho_E^{0^\mu}}}&: L^2(X_{\rho_\mu},T^{0,1}\otimes\text{End}{E_{\rho_E^{0^\mu}}})\to L^2(X_{\rho_\mu},T^{0,1}\otimes\text{End}{E_{\rho_E^{0^\mu}}})
\end{align*}
given by
$$
P^{0,1}_{\mu,E_{\rho_E^{0^\mu}}}=I-\bar\partial_{\mu,E_{\rho_E^{0^\mu}}}\Delta_{0,\mu,E_{\rho_E^{0^\mu}}}^{-1}\bar\partial_{\mu,E_{\rho_E^{0^\mu}}}^* ,
$$
where $\Delta_{0,\mu,E_{\rho_E^{0^\mu}}}$ is the restriction of $\Delta_{\mu,E_{\rho_E^{0^\mu}}}$ to the orthogonal complement of the subspace consisting of constant functions tensor the identity, and $P^{0,1}$ is the projection on the harmonic $(0,1)$-forms. 
We will also need the following results of Takhtajan and Zograf.
\begin{lemma}[{\cite{ZTpuncRie}}]\label{lem:projderiv}
  We have the following variational formulae for the derivative at $X_{\rho_0}$
  \begin{align*}
    \frac{d}{dt}\vert_{t=0} (\Phi_1^{t\tilde\mu})_*\bar\partial_{t\tilde\mu,E_{\rho_E^{0^\mu}}} (\Phi_1^{t\tilde\mu})_*^{-1}&=-\tilde\mu\partial_{0,E} \qquad   \frac{d}{d\bar t}\vert_{t=0} (\Phi_1^{t\tilde\mu})_*\bar\partial_{t\tilde\mu,E_{\rho_E^{0^\mu}}} (\Phi_1^{t\tilde\mu})_*^{-1}=0\\ \frac{d}{dt}\vert_{t=0} (\Phi_1^{t\tilde\mu})_*\bar\partial_{t\tilde\mu,E_{\rho_E^{0^\mu}}}^* (\Phi_1^{t\tilde\mu})_*^{-1}&=0\qquad  \frac{d}{dt}\vert_{t=0} (\Phi_1^{t\tilde\mu})_*\bar\partial_{t\tilde\mu,E_{\rho_E^{0^\mu}}}^* (\Phi_1^{t\tilde\mu})_*^{-1}=-\partial_{0,E}^*\overline{\tilde\mu}.
  \end{align*}
We further have at $(X_{\rho_\mu},E_{\rho_E^{0^\mu}})$, that
  \begin{align*}
    \frac{d}{dt}\vert_{t=0} (\Phi_1^{\mu+t\tilde\mu})_*&P_{\mu+t\tilde\mu,E_{\rho_E^{0^{\mu+t\tilde\mu}}}}^{0,1} (\Phi_1^{\mu+t\tilde\mu})_*^{-1}  \\ &=(\Phi_1^{\mu})_* P_{\mu,E_{\rho_E^{0^{\mu}}}}^{0,1}(\Phi_1^{\mu})_*^{-1}  \frac{d}{dt}\vert_{t=0} (\Phi_1^{\mu+t\tilde\mu})_*\bar\partial_{\mu+t\tilde\mu,E_{\rho_E^{0^{\mu+t\tilde\mu}}}} (\Phi_1^{\mu+t\tilde\mu})_*^{-1} \\ & \qquad  (\Phi_1^{\mu})_*\Delta^{-1}_{0,\mu,E_{\rho_E^{0^{\mu}}}}\bar\partial_{\mu,E_{\rho_E^{0^{\mu}}}}^*(\Phi_1^{\mu})_*^{-1}\\ & + (\Phi_1^{\mu})_*\bar\partial_{\mu,E_{\rho_E^{0^{\mu}}}} \Delta^{-1}_{0,\mu,E_{\rho_E^{0^{\mu}}}}(\Phi_1^{\mu})_*^{-1} \vert_{t=0} (\Phi_1^{\mu+t\tilde\mu})_*\bar\partial^*_{\mu+t\tilde\mu,E_{\rho_E^{0^{\mu+t\tilde\mu}}}} (\Phi_1^{\mu+t\tilde\mu})_*^{-1}\\ & \qquad\frac{d}{dt}(\Phi_1^{\mu})_* P_{\mu,E_{\rho_E^{0^{\mu+t\tilde\mu}}}}^{0,1}(\Phi_1^{\mu})_*^{-1}. \end{align*}
\end{lemma}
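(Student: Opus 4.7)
The plan is to establish the four variational identities for $\bar\partial$ and $\bar\partial^*$ by direct local computation using Lemma~\ref{lem:idenditeise}, and then derive the harmonic projection formula by Leibniz-differentiating the identity $P^{0,1}=I-\bar\partial\,\Delta_0^{-1}\bar\partial^*$.

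For the $\bar\partial$ identities, I would first express $(\Phi_1^{t\tilde\mu})_*\bar\partial_{t\tilde\mu,E}(\Phi_1^{t\tilde\mu})_*^{-1}$ explicitly on $\text{End}(E)$-valued sections over $X_{\rho_0}$. Invoking $\bar\partial(\Phi_1^{t\tilde\mu})^{-1}=-t\tilde\mu\circ(\Phi_1^{t\tilde\mu})^{-1}\,\bar\partial\overline{(\Phi_1^{t\tilde\mu})^{-1}}$ together with $\bar\partial\overline{(\Phi_1^{t\tilde\mu})^{-1}}=\bigl((1-|t\tilde\mu|^2)\overline{\partial\Phi_1^{t\tilde\mu}}\bigr)^{-1}\circ(\Phi_1^{t\tilde\mu})^{-1}$ from Lemma~\ref{lem:idenditeise}, the Jacobian factors from the pushforward and from $\bar\partial$ applied to pulled-back sections cancel against the tensor weights, leaving the operator in the form $(1-|t\tilde\mu|^2)^{-1}(\bar\partial-t\tilde\mu\,\partial)$ up to a holomorphic Jacobian multiplier. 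Differentiating at $t=0$ gives $-\tilde\mu\,\partial_{0,E}$, while the $\bar t$-derivative vanishes by the Ahlfors-Bers holomorphic dependence of $\Phi_1^{t\tilde\mu}$ on $t$.

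For $\bar\partial^*$ I would take the adjoint of the previous operator with respect to the $L^2$ inner products induced by the hyperbolic metric on $X_{\rho_{t\tilde\mu}}$ and the hermitian metric on $E$. The pulled-back volume form scales by $(1-|t\tilde\mu|^2)|\partial\Phi_1^{t\tilde\mu}|^2$, and under adjunction the coefficient $-t\tilde\mu$ becomes conjugated. This produces an operator whose $t$-derivative at $t=0$ vanishes and whose conjugate derivative is $-\partial_{0,E}^*\,\overline{\tilde\mu}$, as the asymmetry between holomorphic and antiholomorphic directions simply flips across adjunction.

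The harmonic projection formula then follows by Leibniz differentiation of $I-\bar\partial\,\Delta_0^{-1}\bar\partial^*$ after conjugation by $(\Phi_1^{\mu+t\tilde\mu})_*$. The middle-factor contribution, handled via $\frac{d}{dt}\Delta_0^{-1}=-\Delta_0^{-1}\bigl(\frac{d}{dt}\Delta_0\bigr)\Delta_0^{-1}$ with $\Delta_0=\bar\partial^*\bar\partial$, uses the relation $\bar\partial^*\bar\partial\,\Delta_0^{-1}=I$ on the orthogonal complement of the constants to telescope into the two outer variations of $\bar\partial$ and $\bar\partial^*$, producing the displayed formula. The main obstacle will be the bookkeeping: tracking which operator is evaluated on which Riemann surface at which parameter value, making the Jacobian factors cancel correctly against the tensor weights of the pulled-back sections, and confirming that the restriction to the orthogonal complement of the constants, needed for $\Delta_0$ to be invertible, is preserved under differentiation (it is, since the constant subspace is canonically independent of $\mu$).
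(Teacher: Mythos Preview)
Your approach is correct and essentially coincides with the paper's. For the four variational identities the paper simply cites \cite{ZTpuncRie} rather than computing, whereas you outline the direct chain-rule computation via Lemma~\ref{lem:idenditeise} (which is in fact the same calculation the paper carries out later in Section~\ref{sec:metvaruniv}); for the projection formula your Leibniz differentiation of $I-\bar\partial\,\Delta_0^{-1}\bar\partial^*$, with $\frac{d}{dt}\Delta_0^{-1}=-\Delta_0^{-1}(\frac{d}{dt}\Delta_0)\Delta_0^{-1}$ and the telescoping via $P^{0,1}=I-\bar\partial\Delta_0^{-1}\bar\partial^*$, is exactly what the paper does.
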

\begin{proof}
The first identities are proven in \cite[Equation (2.6)]{ZTpuncRie} (without the $\text{End}E$ factor, which makes no difference), the last statement is seen straightforwardly as follows
\begin{multline*}
  \frac{d}{dt}\vert_{t=0} (\Phi_1^{\mu+t\tilde\mu})_* P_{\mu+t\tilde\mu,E_{\rho_E^{0^{\mu+t\tilde\mu}}}}^{0,1} (\Phi_1^{\mu+t\tilde\mu})_*^{-1}= \frac{d}{dt}\vert_{t=0}(\Phi_1^{\mu+t\tilde\mu})_* \bar\partial_{\mu+t\tilde\mu,E_{\rho_E^{0^{\mu+t\tilde\mu}}}} (\Phi_1^{\mu+t\tilde\mu})_*^{-1} \\(\Phi_1^{\mu+t\tilde\mu})_*\Delta_{0,\mu+t\tilde\mu,E_{\rho_E^{0^{\mu+t\tilde\mu}}}}^{-1}(\Phi_1^{\mu+t\tilde\mu})_*^{-1} (\Phi_1^{\mu+t\tilde\mu})_*\bar\partial_{\mu+t\tilde\mu,E_{\rho_E^{0^{\mu+t\tilde\mu}}}}^* (\Phi_1^{\mu+t\tilde\mu})_*^{-1}.
\end{multline*}
We can then use the following identities 
\begin{align*}
   \frac{d}{dt}\vert_{t=0} (\Phi_1^{\mu+t\tilde\mu})_*\Delta_{0,\mu+t\tilde\mu,E_{\rho_E^{0^{\mu+t\tilde\mu}}}}^{-1}(\Phi_1^{\mu+t\tilde\mu})_*^{-1}&=-(\Phi_1^{\mu})_*\Delta_{0,\mu,E_{\rho_E^{0^{\mu}}}}^{-1}(\Phi_1^{\mu})_*^{-1}\\ \frac{d}{dt}\vert_{t=0}(\Phi_1^{\mu+t\tilde\mu})_*&\Delta_{0,\mu+t\tilde\mu,E_{\rho_E^{0^{\mu+t\tilde\mu}}}}(\Phi_1^{\mu+t\tilde\mu})_*^{-1}(\Phi_1^{\mu})_*\Delta_{0,\mu,E_{\rho_E^{0^{\mu}}}}^{-1}(\Phi_1^{\mu})_*^{-1},\\ 
\frac{d}{dt}\vert_{t=0}(\Phi_1^{\mu+t\tilde\mu})_*\Delta_{0,\mu+t\tilde\mu,E_{\rho_E^{0^{\mu+t\tilde\mu}}}}(\Phi_1^{\mu+t\tilde\mu})_*^{-1}& \\=\frac{d}{dt}\vert_{t=0}(\Phi_1^{\mu+t\tilde\mu})_*&\bar\partial_{\mu+t\tilde\mu,E_{\rho_E^{0^{\mu+t\tilde\mu}}}}(\Phi_1^{\mu+t\tilde\mu})_*^{-1}(\Phi_1^{\mu})_*\bar\partial_{\mu,E_{\rho_E^{0^{\mu}}}}^* (\Phi_1^{\mu})_*^{-1}\\ +\frac{d}{dt}\vert_{t=0}(\Phi_1^{\mu})_*&\bar\partial_{\mu,E_{\rho_E^{0^{\mu}}}}(\Phi_1^{\mu})_*^{-1} (\Phi_1^{\mu+t\tilde\mu})_*\bar\partial_{\mu+t\tilde\mu,E_{\rho_E^{0^{\mu+t\tilde\mu}}}}^* (\Phi_1^{\mu+t\tilde\mu})_*^{-1}.
\end{align*}
Now, putting this together and using that $P^{0,1}_{\mu,E_{\rho_E^{0^{\mu}}}}=I-\bar\partial_{\mu,E_{\rho_E^{0^{\mu}}}}\Delta_{0,\mu,E_{\rho_E^{0^{\mu}}}}^{-1}\bar\partial_{\mu,E_{\rho_E^{0^{\mu}}}}^*$ we have the last identity.
\end{proof} 
\begin{prop}\label{prop:KSfib}
    The Kodaira-Spencer map of the curve $\rho_E^{(\nu+t\tilde\nu)^{\mu+t\tilde\mu}}$ at $t=0$ is
  \begin{align*}
    KS_{\nu^\mu}(\tilde\mu\oplus\tilde\nu)=& P_{\mu}\tilde\mu^\mu\oplus P^{0,1}_{\nu^\mu}\left(\text{Ad}(f^{\nu^{\mu}}) ((f^{\nu^{\mu}})^{-1}\cdot (\partial f^{\nu^{\mu}})\tilde\mu^\mu+\tilde\nu^\mu)\right.\\ +\text{Ad}f^{\nu^\mu}&\left.\left.(\Phi_1^{\mu})_*^{-1}
 \frac{d}{dt}\vert_{t=0} (\Phi_1^{\mu+t\tilde\mu})_*P_{\mu+t\tilde\mu}^{0,1} (\Phi_1^{\mu+t\tilde\mu})_*^{-1}(\nu)) (1-\vert\mu\circ(\Phi_1^\mu)^{-1}\vert^2) 
\right)
\right)
  \end{align*}
with $\tilde\mu^\mu=(\frac{\tilde\mu}{1-\vert\mu\vert^2}\frac{\partial \Phi_1^\mu}{\overline{\partial \Phi_1^\mu}})\circ (\Phi_1^\mu)^{-1}$ and $P_{\mu}$ and $P^{0,1}_{\nu^\mu}$ the $L^2$-projections on the harmonic forms $\mathcal{H}^{0,1}(X_{\rho_\mu},TX_{\rho_\mu})$ respectivly $\mathcal{H}^{0,1}(X_{\rho_\mu},\textnormal{End}E_{\rho_E^{\nu^\mu}})$
\end{prop}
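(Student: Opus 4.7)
The plan is to apply Lemma \ref{lem:easecomp} to the family $t\mapsto\rho_E^{(\nu+t\tilde\nu)^{\mu+t\tilde\mu}}$ using the fixed reference data $\rho_\mu$ on $\pi_1(X_{\rho_\mu})$ and $\rho_E^{\nu^\mu}$ on $\tilde\pi_1(X_{\rho_\mu})$. To recast the family in the form required by the lemma, I set
$$\Upsilon(t,z)=f^{(\nu+t\tilde\nu)^{\mu+t\tilde\mu}}\circ \Phi_1^{\mu+t\tilde\mu}\circ(\Phi_1^\mu)^{-1}(z).$$
The equivariance relations $\rho_{\mu+t\tilde\mu}=\Phi_1^{\mu+t\tilde\mu}\rho_0(\Phi_1^{\mu+t\tilde\mu})^{-1}$ and $\rho_\mu=\Phi_1^\mu\rho_0(\Phi_1^\mu)^{-1}$, together with the defining relation $f^{\nu^\mu}(\rho_\mu(\gamma)z)^{-1}\rho_E^{\nu^\mu}(\gamma)f^{\nu^\mu}(z)=\rho_E^{0^0}(\gamma)$, imply $\Upsilon(t,\rho_\mu(\gamma)z)\rho_E^{\nu^\mu}(\gamma)\Upsilon(t,z)^{-1}=\rho_E^{(\nu+t\tilde\nu)^{\mu+t\tilde\mu}}(\gamma)$. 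Since $\Upsilon(0,z)=f^{\nu^\mu}(z)$, the lemma already supplies the overall $\text{Ad}(f^{\nu^\mu})$ factor appearing in the displayed formula.

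The next step is to compute $\frac{d}{dt}\vert_{t=0}(\Upsilon(t,z)^{-1}\bar\partial_\SetH \Upsilon(t,z))$ via the chain rule. Three contributions then appear. First, differentiating the base-map composition $\Phi_1^{\mu+t\tilde\mu}\circ(\Phi_1^\mu)^{-1}$ in $t$ and converting the resulting $\bar\partial_\SetH$ into a $\bar\partial$ on $X_{\rho_\mu}$ by means of Lemma \ref{lem:idenditeise} gives the term $(f^{\nu^\mu})^{-1}(\partial f^{\nu^\mu})\tilde\mu^\mu$. Second, the explicit $\tilde\nu$-variation inside $f^{(\nu+t\tilde\nu)^{\mu+t\tilde\mu}}$ contributes $\tilde\nu^\mu$. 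Third, the identification $\nu\mapsto\nu^\mu=P^{0,1}_\mu((\Phi_1^\mu)_*^{-1}(\nu))$ itself depends on $\mu$, and its $t$-derivative yields the term built from $\frac{d}{dt}\vert_{t=0}(\Phi_1^{\mu+t\tilde\mu})_*P^{0,1}_{\mu+t\tilde\mu}(\Phi_1^{\mu+t\tilde\mu})_*^{-1}(\nu)$ that appears in the statement, together with the factor $(1-|\mu\circ(\Phi_1^\mu)^{-1}|^2)$ arising from the same Beltrami conversion used for the first contribution.

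For the Teichm\"uller component the argument reduces to the classical Bers calculation already used in the proof of Proposition \ref{prop:KSuniv} and produces the summand $P_\mu\tilde\mu^\mu$. Collecting the three contributions along the fiber, projecting onto harmonic forms by $P_\mu$ and $P^{0,1}_{\nu^\mu}$, and reorganising the expression then delivers the stated formula. The main technical obstacle is the third contribution: the map $\nu\mapsto\nu^\mu$ is a composition of a pushforward and a harmonic projection, and both factors vary non-trivially with $\mu$, so Lemma \ref{lem:projderiv} must be invoked carefully to rewrite the $t$-derivative in exactly the displayed form while the $(1-|\mu\circ(\Phi_1^\mu)^{-1}|^2)$ normalisation is tracked through the chain rule.
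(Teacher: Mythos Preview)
Your approach is essentially identical to the paper's: the same $\Upsilon$, the same appeal to Lemma~\ref{lem:easecomp}, the same splitting into the contributions handled via Lemmas~\ref{lem:idenditeise} and~\ref{lem:projderiv}, and the same reduction to the classical Bers calculation for the Teichm\"uller direction. One small slip: in the equivariance relation for $\Upsilon$ the fixed bundle representation is $\rho_E=\rho_E^{0^0}$, not $\rho_E^{\nu^\mu}$ (substitute $w=\Phi_1^{\mu+t\tilde\mu}\circ(\Phi_1^\mu)^{-1}(z)$ into the defining relation for $f^{(\nu+t\tilde\nu)^{\mu+t\tilde\mu}}$ to see this), but this is harmless since the output of Lemma~\ref{lem:easecomp} depends only on $\Upsilon$ and on the bundle $E_{R_0}=E_{\rho_E^{\nu^\mu}}$.
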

\begin{proof}
First, we observe that the Teichm{\"u}ller direction is unchanged from the classical case. Now we want to use Lemma \ref{lem:easecomp}, and so using that $\rho_E^{(\nu+t\tilde\nu)^{\mu+t\tilde\mu}}$ is independent of $z$ we find that
\begin{align*}
  \rho_E^{(\nu+t\tilde\nu)^{\mu+t\tilde\mu}}(\gamma)&=f^{(\nu+t\tilde\nu)^{\mu+t\tilde\mu}}(\rho_{\mu+t\tilde\mu}(\gamma)z)\rho_E(\gamma) (f^{(\nu+t\tilde\nu)^{\mu+t\tilde\mu}}(z))^{-1}\\ &=f^{(\nu+t\tilde\nu)^{\mu+t\tilde\mu}}((\Phi_1^{\mu+t\tilde\mu}((\Phi_1^\mu)^{-1}(\rho_{\mu}(\gamma)z)))\rho_E(\gamma)\\ &\qquad (f^{(\nu+t\tilde\nu)^{\mu+t\tilde\mu}}((\Phi_1^{\mu+t\tilde\mu}((\Phi_1^\mu)^{-1}(z)))^{-1}.
\end{align*}
Next we have to calculate
\begin{align}
 \text{Ad}(f^{\nu^{\mu}}) \frac{d}{dt}&\vert_{t=0}\left(\left((f^{{(\nu+t\tilde\nu)^{\mu+t\tilde\mu}}}\circ \Phi_1^{\mu+t \tilde\mu})\circ  (\Phi_1^\mu)^{-1}\right)\bar\partial \left((f^{\nu^{\mu+t \tilde\mu}})^{-1}\circ \Phi_1^{\mu+t \tilde\mu}\circ  (\Phi_1^\mu)^{-1}\right)\right)\nonumber \\ \nonumber =&\text{Ad}(f^{\nu^{\mu}}) \frac{d}{dt}\vert_{t=0}\left(\left({(\nu+t\tilde\nu)^{\mu+t\tilde\mu}})\circ \Phi_1^{\mu+t \tilde\mu}\circ  (\Phi_1^\mu)^{-1}\right)(\overline{\partial(\Phi_1^{\mu+t \tilde\mu}\circ  (\Phi_1^\mu)^{-1})})\right)
\nonumber \\ \nonumber &+\text{Ad}(f^{\nu^{\mu}}) \frac{d}{dt}\vert_{t=0}\Big(\left(f^{{(\nu+t\tilde\nu)^{\mu+t\tilde\mu}}}\circ \Phi_1^{\mu+t \tilde\mu}\circ  (\Phi_1^\mu)^{-1}\right)^{-1}
\nonumber \\ \nonumber &\qquad\cdot (\partial f^{\nu^{\mu}})\circ \Phi_1^{\mu+t \tilde\mu}\circ  (\Phi_1^\mu)^{-1}(\overline{\partial}(\Phi_1^{\mu+t \tilde\mu}\circ  (\Phi_1^\mu)^{-1}))\Big).
\end{align}
For the first term we find that
\begin{align}
\text{Ad}(f^{\nu^{\mu}}) \frac{d}{dt}\vert_{t=0}&\left(\left({(\nu+t\tilde\nu)^{\mu+t\tilde\mu}})\circ \Phi_1^{\mu+t \tilde\mu}\circ  (\Phi_1^\mu)^{-1}\right)(\overline{\partial(\Phi_1^{\mu+t \tilde\mu}\circ  (\Phi_1^\mu)^{-1})})\right)=\nonumber \\ \nonumber =& \frac{d}{dt}\vert_{t=0}(\text{Ad}(f^{\nu^{\mu}})({(\nu+t\tilde\nu)^{\mu+t\tilde\mu}})\circ \Phi_1^{\mu+t \tilde\mu}\circ  (\Phi_1^\mu)^{-1} \nonumber \\ \nonumber &\cdot \overline{\left((\partial\Phi_1^{\mu+t \tilde\mu}\circ  (\Phi_1^\mu)^{-1})\partial(\Phi_1^\mu)^{-1}+(\bar\partial\Phi_1^{\mu+t \tilde\mu}\circ  (\Phi_1^\mu)^{-1})\partial(\bar\Phi_1^\mu)^{-1}\right)} .
\nonumber 
\end{align}
We can now rewrite the last factor using (\ref{eq:andet led}) and (\ref{eq:overlines}) and their conjugates to get that
\begin{align*}
(\partial\Phi_1^{\mu+t \tilde\mu}\circ  &(\Phi_1^\mu)^{-1})\partial(\Phi_1^\mu)^{-1}+(\bar\partial\Phi_1^{\mu+t \tilde\mu}\circ  (\Phi_1^\mu)^{-1})\partial(\bar\Phi_1^\mu)^{-1}\\ &=(\partial\Phi_1^{\mu+t \tilde\mu}\circ  (\Phi_1^\mu)^{-1})\partial(\Phi_1^\mu)^{-1} \\ &\quad+(((\mu+t \tilde\mu)\partial\Phi_1^{\mu+t \tilde\mu})\circ  (\Phi_1^\mu)^{-1}) (-\bar\mu\circ(\Phi_1^{\mu})^{-1}\partial(\Phi_1^{\mu})^{-1}).\\ &=(\partial\Phi_1^{\mu+t \tilde\mu}\circ  (\Phi_1^\mu)^{-1})\partial(\Phi_1^\mu)^{-1} (1-(((\mu+t \tilde\mu)\bar\mu)\circ  (\Phi_1^\mu)^{-1})).\end{align*}
Using that $(\Phi_1^\mu)_*(\nu)=\nu\circ \Phi_1^\mu \overline{\partial \Phi_1^\mu}$ in the first term we find that
\begin{align*}
  \text{Ad}&(f^{\nu^{\mu}}) \frac{d}{dt}\vert_{t=0}\left(\left({(\nu+t\tilde\nu)^{\mu+t\tilde\mu}})\circ \Phi_1^{\mu+t \tilde\mu}\circ  (\Phi_1^\mu)^{-1}\right)(\overline{\partial(\Phi_1^{\mu+t \tilde\mu}\circ  (\Phi_1^\mu)^{-1})})\right)\\ &=\text{Ad}(f^{\nu^{\mu}})\frac{d}{dt}\vert_{t=0}\left( (\Phi_1^\mu)^{-1}_*\left((\Phi_1^{\mu+t \tilde\mu})_*\left({(\nu+t\tilde\nu)^{\mu+t\tilde\mu}}\right)(1-\vert\mu\vert^2-\bar t\mu\overline{\tilde\mu})  \right)\right)\\ &= \text{Ad}(f^{\nu^{\mu}})(\Phi_1^\mu)^{-1}_*\left((1-\vert\mu\vert^2) \frac{d}{dt}\vert_{t=0}(\Phi_1^{\mu+t \tilde\mu})_*\left({P^{0,1}_{\mu+t\tilde\mu,E_{\rho_E^{0^{\mu+t\tilde\mu}}}
}(\Phi_1^{\mu+t \tilde\mu})_*^{-1}(\nu+t\tilde\nu)}\right)\right)\\ &= \text{Ad}(f^{\nu^{\mu}}) P^{0,1}_{\mu,E_{\rho_E^{0^{\mu}}}}\left((\Phi_1^\mu)^{-1}_*(\tilde\nu)+\tilde\mu^\mu\partial_{\mu,E_{\rho_E^{0^{\mu}}}}\Delta^{-1}_{0,\mu,E_{\rho_E^{0^{\mu}}}}\bar\partial_{\mu,E_{\rho_E^{0^{\mu}}}}^* (\Phi_1^\mu)^{-1}_*(\nu)\right)(\Phi_1^\mu)^{-1}_*\\ &\hspace{10cm}\left((1-\vert\mu\vert^2)\right).
\end{align*} Here we have used the result from Lemma \ref{lem:projderiv} to calculate the derivative of the projection.

For the second term we rewrite
\begin{align}
\overline{\partial}&(\Phi_1^{\mu+t \tilde\mu}\circ  (\Phi_1^\mu)^{-1})\nonumber\\&=(\bar\partial\Phi_1^{\mu+t \tilde\mu})\circ  (\Phi_1^\mu)^{-1}\bar\partial\overline{(\Phi_1^{\mu})^{-1}}+(\partial\Phi_1^{\mu+t \tilde\mu})\circ  (\Phi_1^\mu)^{-1}\bar\partial(\Phi_1^{\mu})^{-1}\nonumber\\ \label{eq:bers diff} &=((\mu+t \tilde\mu)\partial\Phi_1^{\mu+t \tilde\mu})\circ  (\Phi_1^\mu)^{-1}\bar\partial\overline{(\Phi_1^{\mu})^{-1}}+(\partial\Phi_1^{\mu+t \tilde\mu})\circ  (\Phi_1^\mu)^{-1}\bar\partial(\Phi_1^{\mu})^{-1}
\end{align}
using \eqref{eq:overlines} and \eqref{eq:andet led} in \eqref{eq:bers diff} and find that
\begin{align*}
  \overline{\partial}(\Phi_1^{\mu+t \tilde\mu}\circ  (\Phi_1^\mu)^{-1})=\left(\frac{t\tilde\mu}{1-\vert\mu\vert^2}\frac{\partial \Phi_1^{\mu+t\tilde\mu}}{\overline{\partial \Phi_1^\mu}}\right)\circ (\Phi_1^\mu)^{-1},
\end{align*}
which implies that
\begin{align*}\text{Ad}(f^{\nu^{\mu}}) \frac{d}{dt}\vert_{t=0}&\Big(\left(f^{{(\nu+t\tilde\nu)^{\mu+t\tilde\mu}}}\circ \Phi_1^{\mu+t \tilde\mu}\circ  (\Phi_1^\mu)^{-1}\right)^{-1}
 \\  &\qquad\cdot (\partial f^{\nu^{\mu}})\circ \Phi_1^{\mu+t \tilde\mu}\circ  (\Phi_1^\mu)^{-1}(\overline{\partial}(\Phi_1^{\mu+t \tilde\mu}\circ  (\Phi_1^\mu)^{-1}))\Big)\\ &=\text{Ad}(f^{\nu^{\mu}}) ((f^{\nu^{\mu}})^{-1}\cdot (\partial f^{\nu^{\mu}})(\tilde\mu^\mu),
\end{align*}
where $\tilde\mu^\mu=\left(\frac{\tilde\mu}{1-\vert\mu\vert^2}\frac{\partial \Phi_1^\mu}{\overline{\partial \Phi_1^\mu}}\right)\circ (\Phi_1^\mu)^{-1}$. And so we have that
\begin{align}
  \label{eq:fibcoordKS1}
 \text{Ad}&(f^{\nu^{\mu}}) \frac{d}{dt}\vert_{t=0}\left(\left((f^{{(\nu+t\tilde\nu)^{\mu+t\tilde\mu}}}\circ \Phi_1^{\mu+t \tilde\mu})\circ  (\Phi_1^\mu)^{-1}\right)\bar\partial \left((f^{\nu^{\mu+t \tilde\mu}})^{-1}\circ \Phi_1^{\mu+t \tilde\mu}\circ  (\Phi_1^\mu)^{-1}\right)\right)\nonumber
\\ \nonumber  =& \text{Ad}(f^{\nu^{\mu}})\left((\Phi_1^{\mu})_*^{-1}
 \frac{d}{dt}\vert_{t=0} (\Phi_1^{\mu+t\tilde\mu})_* P_{\mu+t\tilde\mu,E_{\rho_E^{0^{\mu+t\tilde\mu}}}}^{0,1} (\Phi_1^{\mu+t\tilde\mu})_*^{-1}(\nu))))(1-\vert\mu\vert^2)\circ  (\Phi_1^\mu)^{-1})
\right)
\\  &+\text{Ad}(f^{\nu^{\mu}})\tilde\nu^\mu+\text{Ad}(f^{\nu^{\mu}}) ((f^{\nu^{\mu}})^{-1}\cdot (\partial f^{\nu^{\mu}})(\tilde\mu^\mu)).
\end{align}
We have thus shown that composing with the projection gives us the harmonic representative.
\end{proof}

\subsection{Comparison of the Two Tangent Maps and a proof of the first part of Theorem  \ref{thm:secondordvar}}

We compare $$P^{0,1}_{\mu\oplus\nu, 
}\left((\Phi_1^{\mu})_*^{-1}\left(\text{Ad}\Phi_2^{\mu\oplus\nu}\left(\tilde\mu(\Phi_2^{\mu\oplus\nu})^{-1}\partial\Phi_2^{\mu\oplus\nu}+\tilde\nu\right)\right)\right)$$ and 
\begin{multline*}
  P^{0,1}_{\nu^\mu}\left(\text{Ad}(f^{\nu^{\mu}}) ((f^{\nu^{\mu}})^{-1}\cdot (\partial f^{\nu^{\mu}})\tilde\mu^\mu+\tilde\nu^\mu)\right.\\ +\text{Ad}f^{\nu^\mu}\left.(\Phi_1^{\mu})_*^{-1}
 \frac{d}{dt}\vert_{t=0} (\Phi_1^{\mu+t\tilde\mu})_*P_{\mu+t\tilde\mu,E_{\rho_E^{0^{\mu+t\tilde\mu}}}}^{0,1} (\Phi_1^{\mu+t\tilde\mu})_*^{-1}(\nu)) (1-\vert\mu\circ(\Phi_1^\mu)^{-1}\vert^2)\right).
\end{multline*}
First we observe that $$\text{Ad}f^{\nu^\mu}(\Phi_1^{\mu})_*^{-1}
 \frac{d}{dt}\vert_{t=0} (\Phi_1^{\mu+t\tilde\mu})_*P_{\mu+t\tilde\mu,E_{\rho_E^{0^{\mu+t\tilde\mu}}}}^{0,1} (\Phi_1^{\mu+t\tilde\mu})_*^{-1}(\nu)) (1-\vert\mu\circ(\Phi_1^\mu)^{-1}\vert^2)$$ 
vanishes to first order in $\nu$ and $\mu$ at the center, since we either differentiate with respect to $\mu$ and set $\nu=0$ or we differentiate with respect to $\nu$ and then we find, when we evaluate at $\mu=0$, that $\bar\partial_{0,E}^* \nu=0$, from the expression in Lemma \ref{lem:projderiv}. 

Next we compare $$(\Phi_1^{\mu})_*^{-1}\left(\text{Ad}\Phi_2^{\mu\oplus\nu}\left(\tilde\mu(\Phi_2^{\mu\oplus\nu})^{-1}\partial\Phi_2^{\mu\oplus\nu}\right)\right)$$ with $$\text{Ad}(f^{\nu^{\mu}}) ((f^{\nu^{\mu}})^{-1}\cdot (\partial f^{\nu^{\mu}})\tilde\mu^\mu).$$
We observe, that since $\partial I=0$ both $(\Phi_2^{\mu\oplus\nu})^{-1}\partial\Phi_2^{\mu\oplus\nu}$ and $(f^{\nu^{\mu}})^{-1}\cdot (\partial f^{\nu^{\mu}})$ vanish unless we differentiate it with respect to the moduli space direction or the Teichm{\"u}ller direction. If we differentiate with respect to $\mu$ we get $\frac{\partial}{\partial \epsilon}\nu^{\epsilon\mu}$, but at $\nu=0$ this is $0$. This means we can compare the two after evaluating $\mu=0$, and then we have $f^{\nu^0}=\Phi^{0\oplus\nu}$, and so they agree to first order.

The last terms to consider are $(\Phi_1^{\mu})_*^{-1}\left(\text{Ad}\Phi_2^{\mu\oplus\nu}(\tilde\nu)\right)$ and $\text{Ad}(f^{\nu^{\mu}})\tilde\nu^\mu$. Now, if we put $\mu=0$ the terms agree. If we differentiate with respect to $\mu$, we can put $\nu=0$ first. We are differentiating a term of the form $\bar\partial_{\mu,E_{\rho_E^{0^\mu}}}\Delta_{0,\mu,E_{\rho_E^{0^\mu}}}^{-1}\bar\partial_{\mu,E_{\rho_E^{0^\mu}}}^*(\Phi_1^{\mu})_*^{-1}\tilde\nu$ with respect to $\mu$. The result is an exact term which is killed by the harmonic projection $P^{0,1}$, plus a term containing $\bar\partial_{0,E}^*\nu=0$. This proves the first part of Theorem \ref{thm:secondordvar}. The second part will be proved in the following section.

\section{Variation of the Metric}
\label{sec:metvaruniv}

In order to prove that our new coordinates are not the same as the fibered coordinates discussed above, we shall consider the variation of the metric in both set of coordinates and use the resulting formulae to demonstrate that they are not identical to third order.

\subsection{Variation in the Universal Coordinates}

In this section will calculate the second variation of the metric using the coordinates from Theorem \ref{thm:uni coord}. In the next section we will do the same for the fibered coordinates, and use this to show that the two sets of coordinates differ at third order.
So first we consider the function $(\bar\Phi_2^{\epsilon(\mu\oplus\nu)})^T\Phi^{\epsilon(\mu\oplus\nu)}_2$. This transforms as a function on $X$ with values in $\text{End}E$, our reference point. Now, to further understand this function, we look at $\frac{d}{d\epsilon}\vert_{\epsilon=0}(\bar\Phi_2^{\epsilon(\mu\oplus\nu)})^T\Phi^{\epsilon(\mu\oplus\nu)}_2$. Then we find that
\begin{align*}
  \Delta\frac{d}{d\epsilon}\vert_{\epsilon=0}(\bar\Phi_2^{\epsilon(\mu\oplus\nu)})^T\Phi^{\epsilon(\mu\oplus\nu)}_2=\Delta (\frac{d}{d\epsilon}\vert_{\epsilon=0}(&\bar\Phi_+^{\epsilon(\mu\oplus\nu)})^T+\frac{d}{d\epsilon}\vert_{\epsilon=0}\Phi^{\epsilon(\mu\oplus\nu)}_-\\ &+\frac{d}{d\epsilon}\vert_{\epsilon=0}(\bar\Phi_-^{\epsilon(\mu\oplus\nu)})^T+\frac{d}{d\epsilon}\vert_{\epsilon=0}\Phi^{\epsilon(\mu\oplus\nu)}_+),
\end{align*}
and since $\Phi^{\epsilon(\mu\oplus\nu)}_-$ is antiholomorphic we get that
\begin{align*}
  \Delta\frac{d}{d\epsilon}\vert_{\epsilon=0}(\bar\Phi_2^{\epsilon(\mu\oplus\nu)})^T\Phi^{\epsilon(\mu\oplus\nu)}_2=\frac{d}{d\epsilon}\vert_{\epsilon=0}\Delta(\bar\Phi_+^{\epsilon(\mu\oplus\nu)})^T+\frac{d}{d\epsilon}\vert_{\epsilon=0}\Delta\Phi^{\epsilon(\mu\oplus\nu)}_+).
\end{align*}
We now use that $(\bar\partial-\epsilon\mu\partial)\Phi^{\epsilon(\mu\oplus\nu)}_+=0$ and $\Delta=y^{-2}\partial\bar\partial$ to see that 
\begin{align*}
  \Delta\frac{d}{d\epsilon}\vert_{\epsilon=0}(\bar\Phi_2^{\epsilon(\mu\oplus\nu)})^T\Phi^{\epsilon(\mu\oplus\nu)}_2=y^{-2}\bar\mu\bar\partial\bar\partial(\bar\Phi_+^{0(\mu\oplus\nu)})^T+y^{-2}\mu\partial\partial\Phi^{0(\mu\oplus\nu)}_+)=0,
\end{align*}
since $\Phi_+^0=I$, and so the derivative is $0$. This allows us to conclude, that $\frac{d}{d\epsilon}\vert_{\epsilon=0}(\bar\Phi_2^{\epsilon(\mu\oplus\nu)})^T\Phi^{\epsilon(\mu\oplus\nu)}_2$ is a constant multiple of the identity element in End$E$, and because of the determinant criteria in Theorem~\ref{thm:uni coord} we have \begin{align*}0&=\frac{d}{d\epsilon}\vert_{\epsilon=0}(\det(\bar\Phi_2^{\epsilon(\mu\oplus\nu)})^T\Phi^{\epsilon(\mu\oplus\nu)}_2)\\ &=\tr\vert_{\epsilon=0}((\bar\Phi_2^{0})^T\Phi^{0}_2)^{-1}\frac{d}{d\epsilon}\vert_{\epsilon=0}(\bar\Phi_2^{\epsilon(\mu\oplus\nu)})^T\Phi^{\epsilon(\mu\oplus\nu)}_2=\tr \frac{d}{d\epsilon}\vert_{\epsilon=0}(\bar\Phi_2^{\epsilon(\mu\oplus\nu)})^T\Phi^{\epsilon(\mu\oplus\nu)}_2, \end{align*}
and so $\frac{d}{d\epsilon}\vert_{\epsilon=0}(\bar\Phi_2^{\epsilon(\mu\oplus\nu)})^T\Phi^{\epsilon(\mu\oplus\nu)}_2=0$. We see that this immediately implies that $\frac{d}{d\epsilon}\vert_{\epsilon=0}\partial \Phi_+^{\epsilon(\mu\oplus\nu)}=-\overline{\frac{d}{d\bar\epsilon}\vert_{\epsilon=0}\bar\partial \Phi_-^{\epsilon(\mu\oplus\nu)}}^T=0$. We can study $$\frac{d}{d\bar\epsilon}\vert_{\epsilon=0}(\bar\Phi_2^{\epsilon(\mu\oplus\nu)})^T\Phi^{\epsilon(\mu\oplus\nu)}_2$$ similarly and conclude that $$\frac{d}{d\bar\epsilon}\vert_{\epsilon=0}\partial \Phi_+^{\epsilon(\mu\oplus\nu)}=-\overline{\frac{d}{d\epsilon}\vert_{\epsilon=0}\bar\partial \Phi_-^{\epsilon(\mu\oplus\nu)}}^T=-\bar\nu^T.$$

Now, we want to understand the variation of the $\bar\partial_{\mu,E_{\rho^{(\mu\oplus\nu)}}}$-operator on functions and $\bar\partial_{\mu,E_{\rho^{(\mu\oplus\nu)}}}^*$-operator on $(0,1)$-forms, since they play a central role in understanding the tangent spaces over the universal moduli space. We work on the universal cover and pull back our family of differential operators from the universal cover of $(X_{\rho_\mu},E_{\rho^{(\mu\oplus\nu)}})$ to that of $(X_{\rho_0},E)$, in terms of representations. Then $\bar\partial_{\mu,E_{\rho^{(\mu\oplus\nu)}}}$ is just represented by $\bar\partial$ on $\SetH$
\begin{align}
  \frac{d}{d\epsilon}\vert_{\epsilon=0}\text{Ad}&\Phi_2^{\epsilon \mu\oplus\nu}(\Phi_1^{\epsilon \mu})_*\bar\partial (\Phi_1^{\epsilon \mu})_*^{-1}(\text{Ad}\Phi_2^{\epsilon \mu\oplus\nu})^{-1}\nonumber\\\nonumber &=\frac{d}{d\epsilon}\vert_{\epsilon=0}\text{Ad}\Phi_2^{\epsilon \mu\oplus\nu}\frac{1}{1-\vert\epsilon\mu\vert^2}(\bar\partial -\mu\partial)(\text{Ad}\Phi_2^{\epsilon \mu\oplus\nu})^{-1} \\ &=\frac{d}{d\epsilon}\vert_{\epsilon=0}\frac{1}{1-\vert\epsilon\mu\vert^2}\big(\text{Ad}\Phi_2^{\epsilon \mu\oplus\nu}(\text{ad}(\bar\partial -\epsilon\mu\partial)\Phi_2^{\epsilon \mu\oplus\nu})(\text{Ad}\Phi_2^{\epsilon \mu\oplus\nu})^{-1}\nonumber\\ &\hspace{7cm}+(\bar\partial -\epsilon\mu\partial)\big)\nonumber\\\nonumber &=\frac{d}{d\epsilon}\vert_{\epsilon=0}\frac{1}{1-\vert\epsilon\mu\vert^2}(\epsilon\text{ad}\text{Ad}\Phi_2^{\epsilon \mu\oplus\nu}\nu+(\bar\partial -\epsilon\mu\partial))\\ &=\text{ad}\nu-\mu\partial. \label{eq:barparvarunicoord}
\end{align}

Likewise we find that the variation of $\bar\partial^*=-\rho^{-1}\partial$,
 where also the first derivative of density $\rho$ is zero at the center point of our coordinates(\cite{Wolpert}). We begin by observing that on $(0,1)$-forms we have that
\begin{align*}
(\Phi_1^{\epsilon \mu})_*\partial (\Phi_1^{\epsilon \mu})_*^{-1}\alpha &=(\Phi_1^{\epsilon \mu})_* \partial(\alpha\circ (\Phi_1^{\epsilon\mu})^{-1}\frac{1}{\bar\partial\overline{\Phi_1^{\epsilon\mu}\circ (\Phi_1^{\epsilon\mu})^{-1}}})\\ &=(\Phi_1^{\epsilon \mu})_* ((\partial\alpha)\circ (\Phi_1^{\epsilon\mu})^{-1}\frac{\partial(\Phi_1^{\epsilon\mu})^{-1}}{\bar\partial\overline{\Phi_1^{\epsilon\mu}\circ (\Phi_1^{\epsilon\mu})^{-1}}}\\ &+(\bar\partial\alpha)\circ (\Phi_1^{\epsilon\mu})^{-1}\frac{\partial\overline{(\Phi_1^{\epsilon\mu})^{-1}}}{\bar\partial\overline{\Phi_1^{\epsilon\mu}\circ (\Phi_1^{\epsilon\mu})^{-1}}}\\ &-\alpha\circ (\Phi_1^{\epsilon\mu})^{-1}\frac{\overline{(\partial(\bar\partial\Phi_1^{\epsilon\mu}))\circ (\Phi_1^{\epsilon\mu})^{-1}  (\bar\partial\bar\Phi_1^{\epsilon\mu})^{-1}}}{(\bar\partial\overline{\Phi_1^{\epsilon\mu}\circ (\Phi_1^{\epsilon\mu})^{-1}})^2})\\ &-\alpha\circ (\Phi_1^{\epsilon\mu})^{-1}\frac{\overline{(\partial\partial\Phi_1^{\epsilon\mu})\circ (\Phi_1^{\epsilon\mu})^{-1}  (\bar\partial\Phi_1^{\epsilon\mu})^{-1}}}{(\bar\partial\overline{\Phi_1^{\epsilon\mu}\circ (\Phi_1^{\epsilon\mu})^{-1}})^2})\\ &=\frac{1}{1-\vert\epsilon\mu\vert^2}(\partial-\bar\epsilon\bar\mu\bar\partial-\bar\epsilon(\bar\partial\bar\mu))=\frac{1}{1-\vert\epsilon\mu\vert^2}(\partial-\overline{\epsilon\partial\mu})).
\end{align*}
And so we find that
\begin{align} 
  \frac{d}{d\bar\epsilon}\vert_{\epsilon=0}\text{Ad}&\Phi_2^{\epsilon \mu\oplus\nu}(\Phi_1^{\epsilon \mu})_*\bar\partial^* (\Phi_1^{\epsilon \mu})_*^{-1}(\text{Ad}\Phi_2^{\epsilon \mu\oplus\nu})^{-1} \nonumber\\ \nonumber&=\frac{d}{d\bar\epsilon}\vert_{\epsilon=0}\text{Ad}\Phi_2^{\epsilon \mu\oplus\nu}\frac{-\rho^{-1}}{1-\vert\epsilon\mu\vert^2}(\partial-\bar\epsilon\bar\partial \bar\mu)(\text{Ad}\Phi_2^{\epsilon \mu\oplus\nu})^{-1} \\ &=\frac{d}{d\bar\epsilon}\vert_{\epsilon=0}\frac{-\rho^{-1}}{1-\vert\epsilon\mu\vert^2}\big(\text{Ad}\Phi_2^{\epsilon \mu\oplus\nu}(\text{Ad}(\partial-\bar\epsilon\bar\partial \bar\mu)\Phi_2^{\epsilon \mu\oplus\nu})^{-1}\nonumber\\ \nonumber&\hspace{4cm}+(\bar\partial -\bar\epsilon\bar\partial\bar\mu)\big)\\ &=\frac{d}{d\bar\epsilon}\vert_{\epsilon=0}\frac{-\rho^{-1}}{1-\vert\epsilon\mu\vert^2}(\text{ad}(\text{Ad}(\Phi_2^{\epsilon(\mu\oplus\nu)})^{-1}\partial\Phi_2^{\epsilon(\mu\oplus\nu)})-\bar\epsilon\bar\mu\rho^{-1}\text{ad}\bar\partial\Phi_2^{\epsilon(\mu\oplus\nu)}\nonumber\\ \nonumber&\hspace{4cm}+(\partial-\bar\epsilon\bar\partial \bar\mu))\\ &=-*\text{ad}\nu*+\bar\mu\bar\partial\rho^{-1},\label{eq:delbarstjerneunicoord}
\end{align}
where the equality follows from the equation $\partial\mu=2y^{-1}\mu$, and $\rho^{-1}=y^2$.

This is the first step in understanding the metric on the universal moduli space of pairs of a Riemann surface and a holomorphic bundle over it, given at a point $(X,E)$ by identifying the tangent space with $\mathcal{H}^{0,1}(X,TX)\oplus\mathcal{H}^{0,1}(X,\text{End}E)$. Two elements $\mu_1\oplus\nu_1$ and $\mu_2\oplus\nu_2$ can be paired as follows
\begin{align*}
 g(\mu_1\oplus\nu_1,\mu_2\oplus\nu_2) =\int_\Sigma( \rho_X\mu_1\bar\mu_2+i\tr\nu_1\wedge\star\bar\nu_2^T),
\end{align*}
where $\rho_X$ is the density of the hyperbolic metric corresponding to the complex structure on $X$. Since the term $\int_\Sigma \rho_X\mu_1\bar\mu_2$, is independent of the bundle, nothing has changed compared to the situation on Teichm{\"u}ller space. Let us examine the term $\int_\Sigma \tr\nu_1\wedge\bar(-\star)\nu_2^T$. Since we are evaluation the metric on tangent vectors, $-\star$ will act by $-i$ and so we replace it in the following to avoid confusion.

 In coordinates around $(X,E)$ we have, using Proposition \ref{prop:KSuniv}, that the metric is given by
\begin{align}
  \nonumber
  g^{VB}_{\epsilon(\mu\oplus\nu)}(&\mu_1\oplus\nu_1,\mu_2\oplus\nu_2)\\ \nonumber&=
                             - i                                                   \int_\Sigma \tr P^{0,1}_{\epsilon(\mu\oplus\nu)}(\Phi_1^{\epsilon(\mu\oplus\nu)})_*^{-1}\text{Ad}(\Phi_2^{\epsilon(\mu\oplus\nu)})\nu_1\\ &\hspace{5cm} \nonumber\wedge \overline{P^{0,1}_{\epsilon(\mu\oplus\nu)}((\Phi_1^{\epsilon(\mu\oplus\nu)})_*^{-1}\text{Ad}\Phi_2^{\epsilon(\mu\oplus\nu)})\nu_2}^T
  \\\nonumber &-i \int_\Sigma \tr P^{0,1}_{\epsilon(\mu\oplus\nu)}(\Phi_1^{\epsilon(\mu\oplus\nu)})_*^{-1}\text{Ad}(\Phi_2^{\epsilon(\mu\oplus\nu)})\mu_1(\Phi_2^{\epsilon(\mu\oplus\nu)})^{-1}\partial\Phi_2^{\epsilon(\mu\oplus\nu)}\\ \nonumber&\hspace{5cm}\wedge \overline{P^{0,1}_{\epsilon(\mu\oplus\nu)}((\Phi_1^{\epsilon(\mu\oplus\nu)})_*^{-1}\text{Ad}\Phi_2^{\epsilon(\mu\oplus\nu)})\nu_2}^T
  \\\nonumber &-i\int_\Sigma \tr P^{0,1}_{\epsilon(\mu\oplus\nu)}(\Phi_1^{\epsilon(\mu\oplus\nu)})_*^{-1}\text{Ad}(\Phi_2^{\epsilon(\mu\oplus\nu)})\nu_1\\\nonumber &\hspace{2cm}\wedge \overline{P^{0,1}_{\epsilon(\mu\oplus\nu)}(\Phi_1^{\epsilon(\mu\oplus\nu)})_*^{-1}(\text{Ad}\Phi_2^{\epsilon(\mu\oplus\nu)})\mu_2(\Phi_2^{\epsilon(\mu\oplus\nu)})\partial\Phi_2^{\epsilon(\mu\oplus\nu)}}^T
  \\\nonumber &-i\int_\Sigma \tr P^{0,1}_{\epsilon(\mu\oplus\nu)}(\Phi_1^{\epsilon(\mu\oplus\nu)})_*^{-1}\text{Ad}(\Phi_2^{\epsilon(\mu\oplus\nu)})\mu_1(\Phi_2^{\epsilon(\mu\oplus\nu)})^{-1}\partial\Phi_2^{\epsilon(\mu\oplus\nu)} \\ \label{eq:metuniv}&\hspace{1.5cm} \wedge \overline{P^{0,1}_{\epsilon(\mu\oplus\nu)}((\Phi_1^{\epsilon(\mu\oplus\nu)})_*^{-1}\text{Ad}\Phi_2^{\epsilon(\mu\oplus\nu)})\mu_2(\Phi_2^{\epsilon(\mu\oplus\nu)})^{-1}\partial\Phi_2^{\epsilon(\mu\oplus\nu)}}^T.
\end{align}
Now we can use that $P^{0,1}_{\epsilon(\mu\oplus\nu)}$ is self-adjoint with respect to the metric to rewrite the terms as follows
\begin{multline}
\label{eq:termsgath}
   \int_\Sigma \tr P^{0,1}_{\epsilon(\mu\oplus\nu)}(\Phi_1^{\epsilon(\mu\oplus\nu)})_*^{-1}\text{Ad}(\Phi_2^{\epsilon(\mu\oplus\nu)})\nu_1\wedge \overline{P^{0,1}_{\epsilon(\mu\oplus\nu)}((\Phi_1^{\epsilon(\mu\oplus\nu)})_*^{-1}\text{Ad}\Phi_2^{\epsilon(\mu\oplus\nu)})\nu_2}^T \\= \int_\Sigma \tr \text{Ad}(\overline{\Phi_2^{\epsilon(\mu\oplus\nu)}}^T\Phi_2^{\epsilon(\mu\oplus\nu)})(1-\vert\epsilon\mu\vert^2)\text{Ad}(\Phi_2^{\epsilon(\mu\oplus\nu)})^{-1}\\(\Phi_1^{\epsilon(\mu\oplus\nu)})_*P^{0,1}_{\epsilon(\mu\oplus\nu)}(\Phi_1^{\epsilon(\mu\oplus\nu)})_*^{-1}\text{Ad}(\Phi_2^{\epsilon(\mu\oplus\nu)})\nu_1\wedge  \overline{\nu_2}^T
\end{multline}
Since $(\Phi_1^{\epsilon\mu})_*(d\bar z\wedge dz)=(\vert \partial\Phi_1^{\epsilon\mu}\vert^2-\vert\bar\partial\Phi_1^{\epsilon\mu}\vert^2)d\bar z\wedge dz$. Also recall that $$(\Phi_1^{\epsilon\mu})_*^{-1} \nu=\left(\frac{\nu}{\bar\partial\bar\Phi_1^{\epsilon\mu}}\right)\circ(\Phi_1^{\epsilon\mu})^{-1}$$ and $(\Phi_1^{\epsilon\mu})_* P^{0,1}_{\epsilon(\mu\oplus\nu)} h= (\bar\partial \bar\Phi_1^{\epsilon\mu}) (P^{0,1}_{\epsilon(\mu\oplus\nu)}h)\circ\Phi_1^{\epsilon\mu}$.

From this it follows that
\begin{lemma}
  In the coordinates around $(X,E)$ given by Theorem~\ref{thm:uni coord} we have that
  \begin{align*}
    \frac{d}{d\epsilon}\vert_{\epsilon=0}g^{VB}_{\epsilon(\mu\oplus\nu)}(&\mu_1\oplus\nu_1,\mu_2\oplus\nu_2)=i\int_X\tr((\bar\mu_2\nu_1) \wedge\nu)\\  \frac{d}{d\bar\epsilon}\vert_{\epsilon=0}g^{VB}_{\epsilon(\mu\oplus\nu)}(&\mu_1\oplus\nu_1,\mu_2\oplus\nu_2)=i\int_X\tr((\mu_1\bar\nu^T) \wedge\bar\nu_2^T).
  \end{align*}
\end{lemma}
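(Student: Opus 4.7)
The plan is to apply the product rule to the four-term expression~(\ref{eq:metuniv}) for $g^{VB}_{\epsilon(\mu\oplus\nu)}$ at $\epsilon=0$, exploiting the following simplifications: (a) $\Phi_1^0=\mathrm{Id}$ and $\Phi_2^0=I$, so $(\Phi_1^0)_*^{-1}$ and $\mathrm{Ad}(\Phi_2^0)$ are identities and $P^{0,1}_0$ fixes the harmonic forms $\mu_i,\nu_i$; (b) $\bar\partial^*\nu_i=0$ by harmonicity; (c) the factor $(\Phi_2^{\epsilon(\mu\oplus\nu)})^{-1}\partial\Phi_2^{\epsilon(\mu\oplus\nu)}$ vanishes at $\epsilon=0$ together with its $d/d\epsilon$-derivative, while its $d/d\bar\epsilon$-derivative equals $-\bar\nu^T$ (all three facts established just before the lemma); and (d) the identity $\frac{d}{d\epsilon}|_{\epsilon=0}(\bar\Phi_2^T\Phi_2)=0$ from the positive-definiteness normalization in Theorem~\ref{thm:uni coord}, which yields $\frac{d}{d\epsilon}|_0\bar\Phi_2^T=-\frac{d}{d\epsilon}|_0\Phi_2$ and the analogous relation in the $d/d\bar\epsilon$-direction.

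For the $d/d\epsilon$-derivative, Terms~2 and~4 carry an unconjugated factor of $(\Phi_2)^{-1}\partial\Phi_2$ whose value and $d/d\epsilon$-derivative both vanish by (c), so they contribute nothing. For Term~1, I would first apply the rewriting~(\ref{eq:termsgath}) so that only one projector $P^{0,1}_\epsilon$ appears; the product rule then produces three contributions, of which the two $\mathrm{Ad}$-variations cancel as $\mathrm{ad}(-A)\nu_1+\mathrm{ad}(A)\nu_1=0$ by (d), where $A=\frac{d}{d\epsilon}\Phi_2|_0$, while the projector variation reduces by Lemma~\ref{lem:projderiv} and (b) to a $\bar\partial$-exact form on harmonic $\nu_1$ and so integrates to zero against harmonic $\bar\nu_2^T$ by the standard integration by parts in Hodge theory. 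The only surviving piece is Term~3, whose conjugated factor $\overline{(\Phi_2)^{-1}\partial\Phi_2}^T$ has $d/d\epsilon$-derivative $-\nu$ at $\epsilon=0$ (by conjugate-transposing the $d/d\bar\epsilon$ rule in (c)); evaluating all other factors at $\epsilon=0$ and using harmonicity of $\nu_2$ to remove the outer projector yields
\[
  -i\int_X\tr\bigl(\nu_1\wedge\bar\mu_2(-\nu)\bigr)=i\int_X\tr\bigl((\bar\mu_2\nu_1)\wedge\nu\bigr),
\]
which is the first formula.

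For the $d/d\bar\epsilon$-derivative the argument is symmetric: Term~1 still vanishes, with the $\mathrm{Ad}$-cancellation again supplied by (d) and the projector variation now analysed via~(\ref{eq:delbarstjerneunicoord}); Terms~3 and~4 vanish by an order-count on their conjugated factors; and Term~2 picks up $-\bar\nu^T$ from (c), producing $i\int_X\tr((\mu_1\bar\nu^T)\wedge\bar\nu_2^T)$. The main obstacle will be the Term~1 analysis, namely verifying both that the two $\mathrm{Ad}$-variations genuinely cancel (via (d)) and that the projector variation applied to the harmonic $\nu_1$ is $\bar\partial$-exact, so that it pairs trivially with $\bar\nu_2^T$. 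This requires combining (b), (d), and Lemma~\ref{lem:projderiv} carefully, since two of the three summands in the variation of $P^{0,1}$ vanish only thanks to $\bar\partial^*\nu_1=0$, and the remaining summand must be recognized as $\bar\partial$-exact.
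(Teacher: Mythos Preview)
Your approach is essentially the paper's: both show Term~1 contributes nothing and then extract the sole surviving contribution from the $\partial\Phi_2$ factor via (c). The only place where your argument is looser than the paper's is the treatment of Term~1. After the rewriting~(\ref{eq:termsgath}) the inner conjugation reads $\text{Ad}(\Phi_2)^{-1}\bigl[(\Phi_1)_*P^{0,1}(\Phi_1)_*^{-1}\bigr]\text{Ad}(\Phi_2)$; differentiating the two inner $\text{Ad}$ factors at $\epsilon=0$ gives $-\text{ad}(A)\nu_1 + P^{0,1}_0\,\text{ad}(A)\nu_1$, not $-\text{ad}(A)\nu_1 + \text{ad}(A)\nu_1$, because the projector sits between them --- and (d) plays no role here, having already been spent on the outer factor $\text{Ad}(\bar\Phi_2^T\Phi_2)$. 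The sum equals $-(I-P^{0,1}_0)\text{ad}(A)\nu_1$, which is $\bar\partial$-exact and therefore still integrates to zero against the harmonic $\bar\nu_2^T$; but that is a Hodge-theoretic step, not the algebraic cancellation you wrote. The paper avoids this bookkeeping by differentiating the full conjugated projector as a single object, using the directly computed variations~(\ref{eq:barparvarunicoord}) and~(\ref{eq:delbarstjerneunicoord}) rather than Lemma~\ref{lem:projderiv} (which is tailored to the fibered, not the universal, conjugation); this yields three terms, each either beginning with $\bar\partial$ or ending with $\bar\partial^*$, so the vanishing is immediate.
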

\begin{proof}
We calculate each term gathering the terms like \eqref{eq:termsgath}. We have already seen that $\frac{d}{d\epsilon}\vert_{\epsilon=0}\overline{\Phi_2^{\epsilon(\mu\oplus\nu)}}^T\Phi_2^{\epsilon(\mu\oplus\nu)}=0$, and so these terms don't contribute. Now we consider the operators, where we have left out subscripts from the calculation as it should be clear where they live. The derivative of the projection is a sum of terms starting with an operator ending with $\bar\partial^*$ and ones which starts with $\bar\partial$ as is seen from the following calculation
\begin{align*}
\frac{d}{d\epsilon}\vert_{\epsilon=0}&  \text{Ad}(\Phi_2^{\epsilon(\mu\oplus\nu)})^{-1}(\Phi_1^{\epsilon(\mu\oplus\nu)})_*P^{0,1}(\Phi_1^{\epsilon(\mu\oplus\nu)})_*^{-1}\text{Ad}(\Phi_2^{\epsilon(\mu\oplus\nu)})\\ &=\frac{d}{d\epsilon}\vert_{\epsilon=0}  \text{Ad}(\Phi_2^{\epsilon(\mu\oplus\nu)})^{-1}(\Phi_1^{\epsilon(\mu\oplus\nu)})_*(-\bar\partial\Delta_0^{-1}\bar\partial^*)(\Phi_1^{\epsilon(\mu\oplus\nu)})_*^{-1}\text{Ad}(\Phi_2^{\epsilon(\mu\oplus\nu)})\\ &=\frac{d}{d\epsilon}\vert_{\epsilon=0}  \text{Ad}(\Phi_2^{\epsilon(\mu\oplus\nu)})^{-1}(\Phi_1^{\epsilon(\mu\oplus\nu)})_*(-\bar\partial)(\Phi_1^{\epsilon(\mu\oplus\nu)})_*^{-1}\text{Ad}(\Phi_2^{\epsilon(\mu\oplus\nu)})\\ &\hspace{1cm}\text{Ad}(\Phi_2^{\epsilon(\mu\oplus\nu)})^{-1}(\Phi_1^{\epsilon(\mu\oplus\nu)})_*(\Delta_0^{-1})(\Phi_1^{\epsilon(\mu\oplus\nu)})_*^{-1}\text{Ad}(\Phi_2^{\epsilon(\mu\oplus\nu)})\\ &\hspace{1cm}\text{Ad}(\Phi_2^{\epsilon(\mu\oplus\nu)})^{-1}(\Phi_1^{\epsilon(\mu\oplus\nu)})_*(\bar\partial^*)(\Phi_1^{\epsilon(\mu\oplus\nu)})_*^{-1}\text{Ad}(\Phi_2^{\epsilon(\mu\oplus\nu)})\\&=\frac{d}{d\epsilon}\vert_{\epsilon=0}  \text{Ad}(\Phi_2^{\epsilon(\mu\oplus\nu)})^{-1}(\Phi_1^{\epsilon(\mu\oplus\nu)})_*(-\bar\partial)(\Phi_1^{\epsilon(\mu\oplus\nu)})_*^{-1}\text{Ad}(\Phi_2^{\epsilon(\mu\oplus\nu)})\Delta_0^{-1}\bar\partial^*\\ &+\bar\partial\frac{d}{d\epsilon}\vert_{\epsilon=0}\text{Ad}(\Phi_2^{\epsilon(\mu\oplus\nu)})^{-1}(\Phi_1^{\epsilon(\mu\oplus\nu)})_*(\Delta_0^{-1})(\Phi_1^{\epsilon(\mu\oplus\nu)})_*^{-1}\text{Ad}(\Phi_2^{\epsilon(\mu\oplus\nu)})\bar\partial^*\\ &+\bar\partial\Delta_0^{-1}\frac{d}{d\epsilon}\vert_{\epsilon=0}\text{Ad}(\Phi_2^{\epsilon(\mu\oplus\nu)})^{-1}(\Phi_1^{\epsilon(\mu\oplus\nu)})_*(\bar\partial^*)(\Phi_1^{\epsilon(\mu\oplus\nu)})_*^{-1}\text{Ad}(\Phi_2^{\epsilon(\mu\oplus\nu)}).
\end{align*}
The first two terms are orthogonal to $\nu\in\mathcal{H}^{0,1}(X,\text{End}E)$, and the second one applied to a harmonic from is $0$. This means the contribution form the first term in \eqref{eq:metuniv} is $0$.

Now all the remaining terms contain a $\partial\Phi_2^{\epsilon(\mu\oplus\nu)}$ which is $0$ at $\epsilon=0$. Hence the only contributions to the derivative arise when we derive these, and then we have that $\frac{d}{d\bar\epsilon}\partial\Phi_2^{\epsilon(\mu\oplus\nu)}=-\bar\nu^T$ and $\frac{d}{d\epsilon}\partial\Phi_2^{\epsilon(\mu\oplus\nu)}=0$. Inserting this and setting $\epsilon=0$ we find the formulas in the lemma.
\end{proof}
We proceed to calculate the second order derivatives of the metric. To do so, we need to calculate $\frac{d^2}{d\epsilon_1 d\bar\epsilon_2}\vert_{\epsilon=0}(\overline{\Phi_2^{\epsilon(\mu\oplus\nu)}})^T\Phi_2^{\epsilon(\mu\oplus\nu)}$, $\frac{d^2}{d\epsilon_1 d\bar\epsilon_2}\vert_{\epsilon=0}(\Phi_2^{\epsilon(\mu\oplus\nu)} )^{-1}\partial\Phi_2^{\epsilon(\mu\oplus\nu)}$ and the contribution from $\frac{d^2}{d\epsilon_1 d\bar\epsilon_2}\vert_{\epsilon=0} \text{Ad}(\Phi_2^{\epsilon(\mu\oplus\nu)})^{-1} P^{0,1} \text{Ad}\Phi_2^{\epsilon(\mu\oplus\nu)}$. For the last term, we only need it when applied to a harmonic form and also it should not be orthogonal to a harmonic form.

We now calculate the three terms. For the first term, we begin by applying the Laplace operator on $\SetH$ to the expression.
\begin{align*}
 \Delta &\frac{d^2}{d\epsilon_1 d\bar\epsilon_2}\vert_{\epsilon=0}(\overline{\Phi_2^{\epsilon(\mu\oplus\nu)}})^T\Phi_2^{\epsilon(\mu\oplus\nu)}\\ &=y^2\bar\partial\partial  \frac{d^2}{d\epsilon_1 d\bar\epsilon_2}\vert_{\epsilon=0}(\overline{\Phi_2^{\epsilon(\mu\oplus\nu)}})^T\Phi_2^{\epsilon(\mu\oplus\nu)}\\ &= \frac{d^2}{d\epsilon_1 d\bar\epsilon_2}\vert_{\epsilon=0}y^2\bar\partial\partial((\overline{\Phi_+^{\epsilon(\mu\oplus\nu)}\Phi_-^{\epsilon\mu}})^T\Phi_+^{\epsilon(\mu\oplus\nu)}\Phi_-^{\epsilon\mu})\\ &=\frac{d^2}{d\epsilon_1 d\bar\epsilon_2}\vert_{\epsilon=0}y^2\big(\overline{\bar\partial\Phi_-^{\epsilon\mu}}^T\overline{\partial\Phi_+^{\epsilon(\mu\oplus\nu)}}^T\Phi_+^{\epsilon(\mu\oplus\nu)}\Phi_-^{\epsilon\mu}+\overline{\bar\partial\Phi_-^{\epsilon\mu}}^T\overline{\Phi_+^{\epsilon(\mu\oplus\nu)}}^T\bar\partial\Phi_+^{\epsilon(\mu\oplus\nu)}\Phi_-^{\epsilon\mu}\\ & \hspace{1cm}+\overline{\bar\partial\Phi_-^{\epsilon\mu}}^T\overline{\Phi_+^{\epsilon(\mu\oplus\nu)}}^T\Phi_+^{\epsilon(\mu\oplus\nu)}\bar\partial \Phi_-^{\epsilon\mu}+ \overline{\Phi_-^{\epsilon\mu}}^T\overline{\partial\Phi_+^{\epsilon(\mu\oplus\nu)}}^T\partial\Phi_+^{\epsilon(\mu\oplus\nu)}\Phi_-^{\epsilon\mu}\\ & \hspace{1cm}+\overline{\Phi_-^{\epsilon\mu}}^T\overline{\Phi_+^{\epsilon(\mu\oplus\nu)}}^T\partial\Phi_+^{\epsilon(\mu\oplus\nu)}\bar\partial \Phi_-^{\epsilon\mu}+\overline{\Phi_-^{\epsilon\mu}}^T\overline{\Phi_+^{\epsilon(\mu\oplus\nu)}}^T\bar\partial \partial\Phi_+^{\epsilon(\mu\oplus\nu)} \Phi_-^{\epsilon\mu}\\ &\hspace{1cm}+\overline{\Phi_-^{\epsilon\mu}}^T\overline{\bar\partial \partial\Phi_+^{\epsilon(\mu\oplus\nu)}}^T\Phi_+^{\epsilon(\mu\oplus\nu)} \Phi_-^{\epsilon\mu}+\overline{\Phi_-^{\epsilon\mu}}^T\overline{\bar\partial \Phi_+^{\epsilon(\mu\oplus\nu)}}^T\Phi_+^{\epsilon(\mu\oplus\nu)} \bar\partial\Phi_-^{\epsilon\mu}  \big).
\end{align*}
For all the terms where two different factors are differentiated we are only able to match the $\epsilon$-derivatives in one way that is nonzero. We also have that $\bar\partial \partial\Phi_+^{\epsilon(\mu\oplus\nu)}=\partial \epsilon\mu \partial\Phi_+^{\epsilon(\mu\oplus\nu)}$, and so we need to derive it with respect to $\epsilon$ and $\bar\epsilon$ to get a nonzero contribution. For the same reason $\bar\partial \Phi_+^{\epsilon(\mu\oplus\nu)}$ needs to be differentiated twice to be nonzero. Since $\bar\partial \Phi_+^{\epsilon(\mu\oplus\nu)}$ is always paired with another term, we need to differentiate these terms  and hence they will not contribute, thus we get that
\begin{align*}
  \Delta_0 \frac{d^2}{d\epsilon_1 d\bar\epsilon_2}\vert_{\epsilon=0}(\overline{\Phi_2^{\epsilon(\mu\oplus\nu)}})^T\Phi_2^{\epsilon(\mu\oplus\nu)}&=y^2(\overline{(\nu_2)}^T\overline{(-\bar\nu_1^T)}^T+0+\overline{(\nu_2)}^T\nu_1+\overline{(-\bar\nu_1)^T}^T\\ &\hspace{1cm}\cdot(-\bar\nu_2^T)+(-\bar\nu_2)^T\nu_1-\mu_1\bar\nu_2^T-\bar\mu_2\nu_1+0)\\ &= y^2([\nu_1,\bar\nu_2^T]-\partial\mu_1\bar\nu_2^T-\partial \bar\mu_2\nu_1).
\end{align*}
We conclude that $$\frac{d^2}{d\epsilon_1 d\bar\epsilon_2}\vert_{\epsilon=0}(\overline{\Phi_2^{\epsilon(\mu\oplus\nu)}})^T\Phi_2^{\epsilon(\mu\oplus\nu)}=\Delta^{-1}_0((-\star)\text{ad}\nu_2\star\nu_1-\star\partial\mu_1\bar\nu_2^T-\star\bar\partial\bar\mu_2\nu_1)+cI$$ for some constant $c$. Since the kernel of $\Delta_0$ is the constant multiples of $I$. In what remains this term will not contribute, as we will be looking at $\text{ad}(\frac{d^2}{d\epsilon_1 d\bar\epsilon_2}\vert_{\epsilon=0}(\overline{\Phi_2^{\epsilon(\mu\oplus\nu)}})^T\Phi_2^{\epsilon(\mu\oplus\nu)})$.

Next we calculate the second term $\frac{d^2}{d\epsilon_1 d\bar\epsilon_2}\vert_{\epsilon=0}(\Phi_2^{\epsilon(\mu\oplus\nu)} )^{-1}\partial\Phi_2^{\epsilon(\mu\oplus\nu)}$. The calculation follows directly from the previous computation.
\begin{align*}
 \bar\partial\Delta^{-1}_0&((-\star)\text{ad}\nu_2\star\nu_1-\star\mu_1\bar\nu_2^T-\star\bar\mu_2\nu_1)=\bar\partial\frac{d^2}{d\epsilon_1 d\bar\epsilon_2}\vert_{\epsilon=0}(\overline{\Phi_2^{\epsilon(\mu\oplus\nu)}})^T\Phi_2^{\epsilon(\mu\oplus\nu)}\\ &=\frac{d^2}{d\epsilon_1 d\bar\epsilon_2}\vert_{\epsilon=0}(\overline{\Phi_2^{\epsilon(\mu\oplus\nu)}})^T\bar\partial\Phi_2^{\epsilon(\mu\oplus\nu)}+\frac{d^2}{d\epsilon_1 d\bar\epsilon_2}\vert_{\epsilon=0}(\overline{\partial\Phi_2^{\epsilon(\mu\oplus\nu)}})^T\Phi_2^{\epsilon(\mu\oplus\nu)}\\ &=\frac{d^2}{d\epsilon_1 d\bar\epsilon_2}\vert_{\epsilon=0}(\overline{\Phi_2^{\epsilon(\mu\oplus\nu)}})^T(\Phi_2^{\epsilon(\mu\oplus\nu)} \epsilon\nu+\epsilon\mu\partial\Phi_2^{\epsilon(\mu\oplus\nu)})\\ & \hspace{1cm}+\frac{d^2}{d\epsilon_1 d\bar\epsilon_2}\vert_{\epsilon=0}(\overline{(\Phi_2^{\epsilon(\mu\oplus\nu)})^{-1}\partial\Phi_2^{\epsilon(\mu\oplus\nu)}})^T\overline{\Phi_2^{\epsilon(\mu\oplus\nu)}}^T\Phi_2^{\epsilon(\mu\oplus\nu)}\\ &=-\mu_1\bar\nu_2^T+\frac{d^2}{d\epsilon_1 d\bar\epsilon_2}\vert_{\epsilon=0}(\overline{(\Phi_2^{\epsilon(\mu\oplus\nu)})^{-1}\partial\Phi_2^{\epsilon(\mu\oplus\nu)}})^T,
\end{align*}
since $\frac{d}{d\epsilon}\vert_{\epsilon=0}(\overline{\Phi_2^{\epsilon(\mu\oplus\nu)}})^T\Phi_2^{\epsilon(\mu\oplus\nu)}=0$.

Finally we need to calculate the third term $$\frac{d^2}{d\epsilon_1 d\bar\epsilon_2}\vert_{\epsilon=0} \text{Ad}(\Phi_2^{\epsilon(\mu\oplus\nu)})^{-1}(\Phi_1^{\epsilon\mu})_* P^{0,1}(\Phi_1^{\epsilon\mu})_*^{-1} \text{Ad}\Phi_2^{\epsilon(\mu\oplus\nu)},$$ but only where both $\bar\partial$ and $\bar\partial^*$  in $\bar\partial\Delta_0^{-1}\bar\partial^*$ has been differentiated. This is simplified by the fact that $\bar\partial$  only depending on $\epsilon$ and not $\bar\epsilon$ (see (\ref{eq:barparvarunicoord})). Using this and (\ref{eq:delbarstjerneunicoord}) we have that
\begin{align*}
  \frac{d}{d\epsilon_1}\vert_{\epsilon=0} \text{Ad}(\Phi_2^{\epsilon(\mu\oplus\nu)})&^{-1} (\Phi_1^{\epsilon\mu})_*\bar\partial(\Phi_1^{\epsilon\mu})_*^{-1} \text{Ad}\Phi_2^{\epsilon(\mu\oplus\nu)}
\Delta_0^{-1}
 \\ &\frac{d}{d \bar\epsilon_2}\vert_{\epsilon=0} \text{Ad}(\Phi_2^{\epsilon(\mu\oplus\nu)})^{-1}(\Phi_1^{\epsilon\mu})_* \bar\partial^* (\Phi_1^{\epsilon\mu})_*^{-1}\text{Ad}\Phi_2^{\epsilon(\mu\oplus\nu)}\\ &=(-\mu_1\partial+\text{ad}\nu_1)\Delta_0^{-1}(\partial^*\bar\mu_2-\star\text{ad}\nu_2\star).
\end{align*}
Now we are ready to prove that
\begin{thm}
\label{thm:unviersalsecondordvar}
  Consider the second variation of the metric in the coordinates on the universal moduli space of pairs of a Riemann surface and a holomorphic bundle on it. Then we have this second variation at the center is
  \begin{align*}
\left.    \frac{d^2}{d\epsilon_1d\bar\epsilon_2}\right\vert_{\epsilon=0}g_{\epsilon{(\mu\oplus\nu)}}(\mu_3\oplus\nu_3,&\mu_4\oplus\nu_4)\\ &=-i \int_\Sigma \tr((-\mu_1\partial+\text{ad}\nu_1)\Delta_0^{-1}(\partial^*\bar\mu_2-\star\text{ad}\nu_2\star)\nu_3 \wedge\bar\nu_4^T\\ &-i \int_\Sigma \tr(\text{ad}\Delta^{-1}_0((-\star)\text{ad}\nu_2\star\nu_1-\star(\partial\mu_1\bar\nu_2^T)-\star(\bar\partial\bar\mu_2\nu_1))\nu_3\wedge\bar\nu_4^T)\\ &+i\int_\Sigma \mu_1\bar\mu_2\tr\nu_3\wedge\bar\nu_4^T\\ &-i\int_\Sigma \tr(\text{ad}\nu_1+\mu_1\partial)\Delta_o^{-1}\bar\partial^* \mu_3\bar\nu_2^T\wedge\bar\nu_4^T
\\ &-i\int_\Sigma \tr\mu_3(\partial\Delta_0^{-1}(\star[\star v_1\nu_2]-\star(\partial\mu_1\bar\nu_2^T)-\star(\bar\partial\bar\mu_2\nu_1))\wedge\bar\nu_4^T\\ &-i\int_\Sigma \tr \bar\mu_2\mu_3\nu_1\wedge\overline{\nu_4}^T \\ &-i\int_\Sigma \tr\bar\partial\Delta_o^{-1}(-\star\text{ad}\nu_2\star+\partial^*\mu_2) \nu_3\wedge\bar\mu_4\nu_1
\\ &-i\int_\Sigma \tr\nu_3\wedge \overline{\mu_4(\partial\Delta_0^{-1}(\star[\star v_2\nu_1]-\star(\partial\mu_2\bar\nu_1^T)-\star(\bar\partial\bar\mu_1\nu_2))}^T\\ &-i\int_\Sigma \tr\nu_3\wedge\overline{\bar\mu_1\mu_4\nu_2}^T-i\int_\Sigma \tr\mu_3\nu_1\wedge\overline{\mu_4\nu_2}^T.
  \end{align*}
\end{thm}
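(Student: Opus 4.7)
The plan is to take the explicit expression \eqref{eq:metuniv} for $g^{VB}_{\epsilon(\mu\oplus\nu)}$, apply the mixed partial $\frac{\partial^2}{\partial\epsilon_1\partial\bar\epsilon_2}\vert_{\epsilon=0}$ term by term, and feed in the three ``second-order building blocks'' that have already been computed in the paragraphs immediately preceding the theorem, namely the second mixed derivatives of $(\overline{\Phi_2^{\epsilon(\mu\oplus\nu)}})^T\Phi_2^{\epsilon(\mu\oplus\nu)}$, of $(\Phi_2^{\epsilon(\mu\oplus\nu)})^{-1}\partial\Phi_2^{\epsilon(\mu\oplus\nu)}$, and of the conjugated projector $\text{Ad}(\Phi_2^{\epsilon(\mu\oplus\nu)})^{-1}(\Phi_1^{\epsilon\mu})_*P^{0,1}(\Phi_1^{\epsilon\mu})_*^{-1}\text{Ad}\Phi_2^{\epsilon(\mu\oplus\nu)}$. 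Together with the first-order vanishings $\frac{d}{d\epsilon}\vert_{0}(\overline{\Phi_2})^T\Phi_2 = 0$ and $\partial\Phi_2^{0}=0$, this reduces the calculation to bookkeeping rather than any new differential-geometric input.

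First, I would rewrite \eqref{eq:metuniv} in the ``gathered'' form \eqref{eq:termsgath} so that all the dependence on $\Phi_2^{\epsilon(\mu\oplus\nu)}$ is collected into the factor $\text{Ad}(\overline{\Phi_2^{\epsilon(\mu\oplus\nu)}}^T\Phi_2^{\epsilon(\mu\oplus\nu)})(1-\vert\epsilon\mu\vert^2)$ together with the conjugated projector, times either $\nu_1$ or $\mu_1(\Phi_2^{\epsilon(\mu\oplus\nu)})^{-1}\partial\Phi_2^{\epsilon(\mu\oplus\nu)}$, and paired with $\overline{\nu_2}^T$ or $\overline{\mu_2(\Phi_2^{\epsilon(\mu\oplus\nu)})^{-1}\partial\Phi_2^{\epsilon(\mu\oplus\nu)}}^T$. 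Then I would enumerate the four ``boxes'' of \eqref{eq:metuniv} and for each apply the product rule to $\frac{\partial^2}{\partial\epsilon_1\partial\bar\epsilon_2}$ at $\epsilon=0$. The first-order vanishing $\frac{d}{d\epsilon}\vert_{0}(\overline{\Phi_2})^T\Phi_2 = 0$ eliminates all contributions where both derivatives fall on the adjoint factor; in the $\nu$--$\nu$ box this leaves only the term where both derivatives fall on the projector, which is exactly $(-\mu_1\partial+\text{ad}\nu_1)\Delta_0^{-1}(\partial^*\bar\mu_2-\star\text{ad}\nu_2\star)\nu_3$ combined with the substitution of $\frac{d^2}{d\epsilon_1d\bar\epsilon_2}\vert_0(\overline{\Phi_2})^T\Phi_2$ through the commutator, plus the $\mu_1\bar\mu_2$ contribution from $(1-\vert\epsilon\mu\vert^2)$.

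For the mixed boxes containing $\partial\Phi_2$, I would use that $\partial\Phi_2^{0}=0$, $\frac{d}{d\epsilon}\vert_0\partial\Phi_2 = 0$ and $\frac{d}{d\bar\epsilon}\vert_0\partial\Phi_2 = -\bar\nu^T$ to reduce the order of differentiation required on the remaining factors, so that the second mixed derivative of $(\Phi_2)^{-1}\partial\Phi_2$ enters at most once and only through the displayed formula involving $\Delta_0^{-1}$ from the second paragraph following \eqref{eq:metuniv}. The same kind of analysis on the ``$\mu$--$\mu$'' box yields the closing $\mu_3\nu_1\wedge\overline{\mu_4\nu_2}^T$ term. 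Throughout, I would repeatedly exploit the harmonic orthogonality of $\nu_3,\nu_4$: any remnant beginning with $\bar\partial$ or ending with $\bar\partial^*$ is killed after integration against $\nu_3,\bar\nu_4^T$, which is precisely the mechanism used in the first-derivative lemma preceding the theorem. I would also use $\text{ad}I = 0$ to discard the ambiguous constant in the second derivative of $(\overline{\Phi_2})^T\Phi_2$.

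The main obstacle is purely organizational: the product rule produces on the order of forty elementary contributions distributed across the four boxes of \eqref{eq:metuniv} and one must correctly identify, for each, which of its factors absorb the two derivatives, which vanish by the first-order relations, which become exact and die against the harmonic pair $(\nu_3,\nu_4)$, and which remain. I would therefore organize the computation into a table indexed by ``which box''$\times$``which factor receives $\partial_{\epsilon_1}$''$\times$``which factor receives $\partial_{\bar\epsilon_2}$'', systematically discarding the vanishing cells using the identities \eqref{eq:barparvarunicoord}, \eqref{eq:delbarstjerneunicoord}, and the two explicit second derivatives for $(\overline{\Phi_2})^T\Phi_2$ and $(\Phi_2)^{-1}\partial\Phi_2$ derived just above. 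Collecting the surviving cells and matching them with the claimed ten terms completes the proof.
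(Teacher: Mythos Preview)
Your proposal is correct and follows essentially the same approach as the paper's own proof: the paper likewise treats the four ``boxes'' of \eqref{eq:metuniv} one at a time after passing to the gathered form \eqref{eq:termsgath}, applies the product rule, kills cross terms via the first-order vanishings of $(\overline{\Phi_2})^T\Phi_2$ and $\partial\Phi_2$, and then substitutes the three precomputed second-order building blocks (second derivative of $(\overline{\Phi_2})^T\Phi_2$, of $(\Phi_2)^{-1}\partial\Phi_2$, and the surviving piece of the differentiated projector), using harmonic orthogonality to discard the $\bar\partial(\cdots)$ and $(\cdots)\bar\partial^*$ remnants. One small wording slip: the first-order vanishing of $(\overline{\Phi_2})^T\Phi_2$ eliminates the \emph{cross} terms (one derivative on the adjoint factor, one elsewhere), not the term where both derivatives fall on it---but you immediately include that surviving second-derivative contribution, so the logic is intact.
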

\begin{proof}
  Since we already have computed all the ingredients, we gather the results here.
\begin{align*}
 -i\frac{d^2}{d\epsilon_1 d\bar\epsilon_2}\vert_{\epsilon=0}& \int_\Sigma \tr P^{0,1}(\Phi_1^{\epsilon(\mu\oplus\nu)})_*^{-1}\text{Ad}(\Phi_2^{\epsilon(\mu\oplus\nu)})\nu_3\\ &\hspace{1.5cm}\wedge \overline{P^{0,1}((\Phi_1^{\epsilon(\mu\oplus\nu)})_*^{-1}\text{Ad}\Phi_2^{\epsilon(\mu\oplus\nu)})\nu_4}^T\\ &=-i\frac{d^2}{d\epsilon_1 d\bar\epsilon_2}\vert_{\epsilon=0} \int_\Sigma \tr \text{Ad}(\overline{\Phi_2^{\epsilon(\mu\oplus\nu)}}^T\Phi_2^{\epsilon(\mu\oplus\nu)})(1-\vert\epsilon\mu\vert^2))\text{Ad}(\Phi_2^{\epsilon(\mu\oplus\nu)})^{-1}\\ &\hspace{1.5cm}(\Phi_1^{\epsilon(\mu\oplus\nu)})_*P^{0,1}(\Phi_1^{\epsilon(\mu\oplus\nu)})_*^{-1}\text{Ad}(\Phi_2^{\epsilon(\mu\oplus\nu)})\nu_3\wedge  \overline{\nu_4}^T\\ &= -i\int_\Sigma \tr \frac{d^2}{d\epsilon_1 d\bar\epsilon_2}\vert_{\epsilon=0}\text{Ad}(\overline{\Phi_2^{\epsilon(\mu\oplus\nu)}}^T\Phi_2^{\epsilon(\mu\oplus\nu)})\nu_3\wedge  \overline{\nu_4}^T\\ &-i \int_\Sigma \tr \frac{d^2}{d\epsilon_1 d\bar\epsilon_2}\vert_{\epsilon=0}\text{Ad}(\Phi_2^{\epsilon(\mu\oplus\nu)})^{-1}(\Phi_1^{\epsilon(\mu\oplus\nu)})_*\\ & \hspace{2cm}P^{0,1}(\Phi_1^{\epsilon(\mu\oplus\nu)})_*^{-1}\text{Ad}(\Phi_2^{\epsilon(\mu\oplus\nu)})\nu_3\wedge   \overline{\nu_4}^T\\ &-i\int_\Sigma \frac{d^2}{d\epsilon_1 d\bar\epsilon_2}\vert_{\epsilon=0}(1-\vert\epsilon\mu\vert^2)\tr\nu_3\wedge\bar\nu_4^T\\ &= -i\int_\Sigma \tr(\text{ad}(\Delta^{-1}_0((-\star)\text{ad}\nu_2\star\nu_1-\star(\partial\mu_1\bar\nu_2^T)-\star(\bar\partial\bar\mu_2\nu_1)))\nu_3\wedge\bar\nu_4^T)\\ &-i\int_\Sigma \tr((-\mu_1\partial+\text{ad}\nu_1)\Delta_0^{-1}(\partial^*\bar\mu_2-\star\text{ad}\nu_2\star)\nu_3 \wedge\bar\nu_4^T\\ &+i\int_\Sigma \mu_1\bar\mu_2\tr\nu_3\wedge\bar\nu_4^T.
\end{align*}
Now for the second term we have that
\begin{align*}
\frac{d^2}{d\epsilon_1 d\bar\epsilon_2}\vert_{\epsilon=0}-i \int_\Sigma &\tr P^{0,1}(\Phi_1^{\epsilon(\mu\oplus\nu)})_*^{-1}\text{Ad}(\Phi_2^{\epsilon(\mu\oplus\nu)})\mu_3(\Phi_2^{\epsilon(\mu\oplus\nu)})^{-1}\partial\Phi_2^{\epsilon(\mu\oplus\nu)}\\ &\hspace{6cm}\wedge \overline{P^{0,1}((\Phi_1^{\epsilon(\mu\oplus\nu)})_*^{-1}\text{Ad}\Phi_2^{\epsilon(\mu\oplus\nu)})\nu_4}^T\\ &=-i \int_\Sigma \tr P^{0,1}\mu_3 \frac{d^2}{d\epsilon_1 d\bar\epsilon_2}\vert_{\epsilon=0}(\Phi_2^{\epsilon(\mu\oplus\nu)})^{-1}\partial\Phi_2^{\epsilon(\mu\oplus\nu)}\wedge \overline{\nu_4}^T\\ &-i \int_\Sigma \tr\frac{d}{d\epsilon_1}\vert_{\epsilon=0}\text{Ad}(\Phi_2^{\epsilon(\mu\oplus\nu)})^{-1}(\Phi_1^{\epsilon(\mu\oplus\nu)})_*P^{0,1}(\Phi_1^{\epsilon(\mu\oplus\nu)})_*^{-1}\\ &\hspace{3cm}\text{Ad}(\Phi_2^{\epsilon(\mu\oplus\nu)})\mu_3\frac{d}{d\bar\epsilon_2}\vert_{\epsilon=0}(\Phi_2^{\epsilon(\mu\oplus\nu)})^{-1}\partial\Phi_2^{\epsilon(\mu\oplus\nu)}\wedge \overline{\nu_4}^T\\ &= -i \int_\Sigma \tr P^{0,1}\mu_3 (
\partial\Delta^{-1}_0((-\star)\text{ad}\nu_2\star\nu_1-\star(\bar\partial\bar\mu_2\nu_1)-\star(\partial\mu_1\bar\nu_2^T))+\bar\mu_2 \nu_1)
\wedge \overline{\nu_4}^T\\ &-i \int_\Sigma \tr (-\mu_1\partial+\text{ad}\nu_1)\Delta_0^{-1}\bar\partial^*\mu_3\bar\nu_2^T\wedge \overline{\nu_4}^T.
\end{align*}
And similarly
\begin{align*}
-i\int_\Sigma &\tr P^{0,1}(\Phi_1^{\epsilon(\mu\oplus\nu)})_*^{-1}\text{Ad}(\Phi_2^{\epsilon(\mu\oplus\nu)})\nu_3\\ &\hspace{3cm}\wedge \overline{P^{0,1}(\Phi_1^{\epsilon(\mu\oplus\nu)})_*^{-1}(\text{Ad}\Phi_2^{\epsilon(\mu\oplus\nu)})\mu_4(\Phi_2^{\epsilon(\mu\oplus\nu)})\partial\Phi_2^{\epsilon(\mu\oplus\nu)}}^T\\ &= -i \int_\Sigma \tr \nu_3
\wedge \overline{P^{0,1}\mu_4 (
\partial\Delta^{-1}_0((-\star)\text{ad}\nu_1\star\nu_2-\star(\bar\partial\bar\mu_1\nu_2)-\star(\partial\mu_2\bar\nu_1^T))+\bar\mu_1 \nu_2)}^T\\ &-i \int_\Sigma \tr \bar\partial\Delta_0^{-1}(-\partial^*\mu_2-\star\text{ad}\nu_2\star)\nu_3\wedge \overline{\mu_4}{\nu_1}.
            \end{align*}
Finally there is not much choice in how to differentiate the following term
            \begin{align*}-i \frac{d^2}{d\epsilon_1 d\bar\epsilon_2}\vert_{\epsilon=0}\int_\Sigma &\tr P^{0,1}(\Phi_1^{\epsilon(\mu\oplus\nu)})_*^{-1}\text{Ad}(\Phi_2^{\epsilon(\mu\oplus\nu)})\mu_3(\Phi_2^{\epsilon(\mu\oplus\nu)})^{-1}\partial\Phi_2^{\epsilon(\mu\oplus\nu)} \\ &\hspace{.5cm} \wedge \overline{P^{0,1}((\Phi_1^{\epsilon(\mu\oplus\nu)})_*^{-1}\text{Ad}\Phi_2^{\epsilon(\mu\oplus\nu)})\mu_4(\Phi_2^{\epsilon(\mu\oplus\nu)})^{-1}\partial\Phi_2^{\epsilon(\mu\oplus\nu)}}^T\\=-i \int_\Sigma &\tr P^{0,1}(\mu_3\frac{d}{d\bar\epsilon_2}\vert_{\epsilon=0}\partial\Phi_2^{\epsilon(\mu\oplus\nu)}  \wedge \overline{P^{0,1}\mu_4\frac{d}{d\bar\epsilon_1}\vert_{\epsilon=0}\partial\Phi_2^{\epsilon(\mu\oplus\nu)}}^T\\ =-i\int_\Sigma&\tr \mu_3\bar\nu_2^T\wedge\bar\mu_4\nu_1.
\end{align*}
Collect all these results and we have the conclusion.
\end{proof}
\subsection{The Variation of the Metric in Fibered Coordinates}

Now for the fibered coordinates we can do the same computations. From the calculation of the Kodaira-Spencer map (Proposition~\ref{prop:KSfib}) we know, that the metric in the moduli space of bundles direction is given by
\begin{align*}
  &g_{VB}^{\epsilon\nu^{\epsilon\mu}}(\mu_1\oplus\nu_1,\mu_2\oplus\nu_2)=-i\int_\Sigma P_{\epsilon\nu^{\epsilon\mu}}^{0,1}\text{Ad}f^{\epsilon\nu^{\epsilon\mu}}\nu_1^{\epsilon\mu}\wedge\overline{P_{\epsilon\nu^{\epsilon\mu}}^{0,1}\text{Ad}f^{\epsilon\nu^{\epsilon\mu}}\nu_2^{\epsilon\mu}}^T\\ &-i\int_\Sigma P_{\epsilon\nu^{\epsilon\mu}}^{0,1}\text{Ad}f^{\epsilon\nu^{\epsilon\mu}}\nu_1^{\epsilon\mu}\wedge \overline{P_{\epsilon\nu^{\epsilon\mu}}^{0,1}\text{Ad}(f^{\epsilon\nu^{\epsilon\mu}})(\mu_2^{\epsilon\mu}(f^{\epsilon\nu^{\epsilon\mu}})^{-1}\partial f^{\epsilon\nu^{\epsilon\mu}})}^T\\ &-i\int_\Sigma P_{\epsilon\nu^{\epsilon\mu}}^{0,1}\text{Ad}f^{\epsilon\nu^{\epsilon\mu}}\nu_1^{\epsilon\mu}\\ &\wedge\overline{P_{\epsilon\nu^{\epsilon\mu}}^{0,1}\text{Ad}(f^{\epsilon\nu^{\epsilon\mu}})((\Phi_1^{\epsilon\mu})_*^{-1}(1-\vert \epsilon\mu\vert^2)\frac{d}{dt}\vert_{t=0}(\Phi_1^{\epsilon\mu+t\mu_2})P^{0,1}(\Phi_1^{\epsilon\mu+t\mu_2})^{-1}\nu)}^T
\\ &-i\int_\Sigma P_{\epsilon\nu^{\epsilon\mu}}^{0,1}\text{Ad}(f^{\epsilon\nu^{\epsilon\mu}})(\mu_1^{\epsilon\mu}(f^{\epsilon\nu^{\epsilon\mu}})^{-1}\partial f^{\epsilon\nu^{\epsilon\mu}})
\wedge\overline{P_{\epsilon\nu^{\epsilon\mu}}^{0,1}\text{Ad}f^{\epsilon\nu^{\epsilon\mu}}\nu_2^{\epsilon\mu}}^T\\ &-i\int_\Sigma P_{\epsilon\nu^{\epsilon\mu}}^{0,1}\text{Ad}(f^{\epsilon\nu^{\epsilon\mu}})(\mu_1^{\epsilon\mu}(f^{\epsilon\nu^{\epsilon\mu}})^{-1}\partial f^{\epsilon\nu^{\epsilon\mu}})
\wedge \overline{P_{\epsilon\nu^{\epsilon\mu}}^{0,1}\text{Ad}(f^{\epsilon\nu^{\epsilon\mu}})(\mu_2^{\epsilon\mu}(f^{\epsilon\nu^{\epsilon\mu}})^{-1}\partial f^{\epsilon\nu^{\epsilon\mu}})}^T\\ &-i\int_\Sigma P_{\epsilon\nu^{\epsilon\mu}}^{0,1}\text{Ad}(f^{\epsilon\nu^{\epsilon\mu}})(\mu_1^{\epsilon\mu}(f^{\epsilon\nu^{\epsilon\mu}})^{-1}\partial f^{\epsilon\nu^{\epsilon\mu}})
\\ &\wedge\overline{P_{\epsilon\nu^{\epsilon\mu}}^{0,1}\text{Ad}(f^{\epsilon\nu^{\epsilon\mu}})((\Phi_1^{\epsilon\mu})_*^{-1}(1-\vert \epsilon\mu\vert^2)\frac{d}{dt}\vert_{t=0}(\Phi_1^{\epsilon\mu+t\mu_2})P^{0,1}(\Phi_1^{\epsilon\mu+t\mu_2})^{-1}\nu)}^T
\\ &-i\int_\Sigma P_{\epsilon\nu^{\epsilon\mu}}^{0,1}\text{Ad}(f^{\epsilon\nu^{\epsilon\mu}})((\Phi_1^{\epsilon\mu})_*^{-1}(1-\vert \epsilon\mu\vert^2)\frac{d}{dt}\vert_{t=0}(\Phi_1^{\epsilon\mu+t\mu_1})P^{0,1}(\Phi_1^{\epsilon\mu+t\mu_1})^{-1}\nu)\\ &\hspace{9.5cm}\wedge\overline{P_{\epsilon\nu^{\epsilon\mu}}^{0,1}\text{Ad}f^{\epsilon\nu^{\epsilon\mu}}\nu_2^{\epsilon\mu}}^T\\ &-i\int_\Sigma P_{\epsilon\nu^{\epsilon\mu}}^{0,1}\text{Ad}(f^{\epsilon\nu^{\epsilon\mu}})((\Phi_1^{\epsilon\mu})_*^{-1}(1-\vert \epsilon\mu\vert^2)\frac{d}{dt}\vert_{t=0}(\Phi_1^{\epsilon\mu+t\mu_1})P^{0,1}(\Phi_1^{\epsilon\mu+t\mu_1})^{-1}\nu)\\ &\hspace{6cm}\wedge \overline{P_{\epsilon\nu^{\epsilon\mu}}^{0,1}\text{Ad}(f^{\epsilon\nu^{\epsilon\mu}})(\mu_2^{\epsilon\mu}(f^{\epsilon\nu^{\epsilon\mu}})^{-1}\partial f^{\epsilon\nu^{\epsilon\mu}})}^T\\ &-i\int_\Sigma P_{\epsilon\nu^{\epsilon\mu}}^{0,1}\text{Ad}(f^{\epsilon\nu^{\epsilon\mu}})((\Phi_1^{\epsilon\mu})_*^{-1}(1-\vert \epsilon\mu\vert^2)\frac{d}{dt}\vert_{t=0}(\Phi_1^{\epsilon\mu+t\mu_1})P^{0,1}(\Phi_1^{\epsilon\mu+t\mu_1})^{-1}\nu)\\ &\wedge\overline{P_{\epsilon\nu^{\epsilon\mu}}^{0,1}\text{Ad}(f^{\epsilon\nu^{\epsilon\mu}})((\Phi_1^{\epsilon\mu})_*^{-1}(1-\vert \epsilon\mu\vert^2)\frac{d}{dt}\vert_{t=0}(\Phi_1^{\epsilon\mu+t\mu_2})P^{0,1}(\Phi_1^{\epsilon\mu+t\mu_2})^{-1}\nu)}^T.
\end{align*}
While these nine terms look intimidating, we can discard three of the terms, because $P_{\epsilon\nu^{\epsilon\mu}}^{0,1}\text{Ad}(f^{\epsilon\nu^{\epsilon\mu}})((\Phi_1^{\epsilon\mu})_*^{-1}(1-\vert \epsilon\mu\vert^2)\frac{d}{dt}\vert_{t=0}(\Phi_1^{\epsilon\mu+t\mu_2})P^{0,1}(\Phi_1^{\epsilon\mu+t\mu_2})^{-1}\nu)$ vanishes to second-order and $P_{\epsilon\nu^{\epsilon\mu}}^{0,1}\text{Ad}(f^{\epsilon\nu^{\epsilon\mu}})(\mu_2^{\epsilon\mu}(f^{\epsilon\nu^{\epsilon\mu}})^{-1}\partial f^{\epsilon\nu^{\epsilon\mu}})$ vanishes to first-order, so terms containing both kind of factors or only the first kind of factors will vanish to higher order, than we are interested in. Now the first variation will be the same as in Section \ref{sec:metvaruniv}, but to calculate it we will have to work with slightly different expressions.

First we consider
\begin{align*}
  \frac{d}{d\epsilon}\vert_{\epsilon=0} ((\overline{f^{\epsilon\nu^\epsilon\mu}})^Tf^{\epsilon\nu^\epsilon\mu})\circ\Phi_1^{\epsilon\mu}=0
\end{align*}
and
\begin{align*}
  \frac{d}{d\bar\epsilon}\vert_{\epsilon=0} ((\overline{f^{\epsilon\nu^\epsilon\mu}})^Tf^{\epsilon\nu^\epsilon\mu})\circ\Phi_1^{\epsilon\mu}=0.
\end{align*}
Both of these follow from the computations in \cite{ZTVB}, where it was shown that $\frac{d}{d\bar\epsilon}\vert_{\epsilon=0} ((\overline{f^{\epsilon\nu}})^Tf^{\epsilon\nu})=0$. Now composing with $\nu\to\nu^{\epsilon\mu}$ won't change it, and if we differentiate $\Phi_1^{\epsilon\mu}$ then we can set $\epsilon=0$ in the rest of the terms and calculate $\frac{d}{d\epsilon} I\circ\Phi_1^{\epsilon\mu}=0$. Now for a projection, the first derivative will either have harmonic forms in it's kernel or the image is in the orthogonal complement, hence the only contributions are from the terms
\begin{align*}
  \int_\Sigma P_{\epsilon\nu^{\epsilon\mu}}^{0,1}\text{Ad}f^{\epsilon\nu^{\epsilon\mu}}\nu_1^{\epsilon\mu} \overline{P_{\epsilon\nu^{\epsilon\mu}}^{0,1}\text{Ad}(f^{\epsilon\nu^{\epsilon\mu}})(\mu_2^{\epsilon\mu}(f^{\epsilon\nu^{\epsilon\mu}})^{-1}\partial f^{\epsilon\nu^{\epsilon\mu}})}^T
\end{align*}
and
\begin{align*}
\int_\Sigma P_{\epsilon\nu^{\epsilon\mu}}^{0,1}\text{Ad}(f^{\epsilon\nu^{\epsilon\mu}})(\mu_1^{\epsilon\mu}(f^{\epsilon\nu^{\epsilon\mu}})^{-1}\partial f^{\epsilon\nu^{\epsilon\mu}})
\overline{P_{\epsilon\nu^{\epsilon\mu}}^{0,1}\text{Ad}f^{\epsilon\nu^{\epsilon\mu}}\nu_2^{\epsilon\mu}}^T.
\end{align*}
And so we have, completely analogues to the previous section the following lemma.
\begin{lemma}
  In the fibered coordinates around $(X,E)$  we have that:
  \begin{align*}
    \frac{d}{d\epsilon}\vert_{\epsilon=0}g^{VB}_{\epsilon\nu^{\epsilon\mu}}(&\mu_1\oplus\nu_1,\mu_2\oplus\nu_2)=i\int_X\bar\mu_2\tr(\nu_1\nu)\\  \frac{d}{d\bar\epsilon}\vert_{\epsilon=0}g^{VB}_{\epsilon\nu^{\epsilon\mu}}(&\mu_1\oplus\nu_1,\mu_2\oplus\nu_2)=i\int_X\mu_1\tr(\bar\nu^T \bar\nu_2^T)
  \end{align*}
\end{lemma}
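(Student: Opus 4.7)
The plan is to evaluate the two remaining contributions, since the text immediately preceding the lemma has already reduced the nine-term expansion of $g^{VB}_{\epsilon\nu^{\epsilon\mu}}$ to exactly the two mixed integrals of type $\nu\cdot\overline{\mu\partial f}$ and $\mu\partial f\cdot\overline{\nu}$.

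At the centre $\epsilon=0$ one has $f^{\epsilon\nu^{\epsilon\mu}}\vert_{\epsilon=0}=I$, so the factor $(f^{\epsilon\nu^{\epsilon\mu}})^{-1}\partial f^{\epsilon\nu^{\epsilon\mu}}$ vanishes there. Consequently, when I take $\frac{d}{d\epsilon}$ or $\frac{d}{d\bar\epsilon}$ at the centre, every factor other than $(f^{-1}\partial f)$ may be frozen at $\epsilon=0$: the projections reduce to the identity on the harmonic representatives $\nu_j$, the $\text{Ad}$ prefactors become trivial, and $\mu_j^{\epsilon\mu}\vert_{\epsilon=0}=\mu_j$. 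The only real computation left is to differentiate $\partial f^{\epsilon\nu^{\epsilon\mu}}$ itself.

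For this, I will establish, by transplanting the Laplacian-plus-determinant argument used for $\Phi_2$ in Section~\ref{sec:metvaruniv} into the fibered setting, the variational identities
\[
\frac{d}{d\epsilon}\Big\vert_{\epsilon=0}\partial f^{\epsilon\nu^{\epsilon\mu}}=0,\qquad \frac{d}{d\bar\epsilon}\Big\vert_{\epsilon=0}\partial f^{\epsilon\nu^{\epsilon\mu}}=-\bar\nu^T.
\]
The input is the pair of identities $\frac{d}{d\epsilon}\vert_{\epsilon=0}((\bar f^T f)\circ\Phi_1^{\epsilon\mu})=0$ and its $\bar\epsilon$-counterpart recorded above, which are the exact fibered-coordinate analogs of the $\Phi_2$-identities used in Section~\ref{sec:metvaruniv}; applying $\Delta$, noting that $f^0=I$ makes the mixed second-derivative terms drop out, and then using the determinant normalisation of the Zograf--Takhtadzhyan gauge gives the stated $\partial f$ variations. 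Substituting $\frac{d}{d\bar\epsilon}\vert_{\epsilon=0}(f^{-1}\partial f)=-\bar\nu^T$ into the $\nu\cdot\overline{\mu\partial f}$ term and taking conjugate-transposes produces the integrand $-\bar\mu_2\,\nu_1\wedge\nu$, yielding the first stated formula, while the conjugate calculation applied to the $\mu\partial f\cdot\overline{\nu}$ term produces the second. The main obstacle is verifying these $\partial f$ variational formulas for $f$ in place of $\Phi_2$; everything else is routine order-of-vanishing bookkeeping already carried out in the paragraphs preceding the lemma.
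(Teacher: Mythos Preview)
Your proposal is correct and follows essentially the same route as the paper. The paper's own ``proof'' is the single clause ``completely analogous to the previous section,'' and what you have written is precisely that analogy spelled out: the preceding paragraphs reduce the nine terms to the two mixed ones, the factor $(f^{\epsilon\nu^{\epsilon\mu}})^{-1}\partial f^{\epsilon\nu^{\epsilon\mu}}$ vanishes at $\epsilon=0$, and the variational identities $\frac{d}{d\epsilon}\vert_{\epsilon=0}\partial f=0$, $\frac{d}{d\bar\epsilon}\vert_{\epsilon=0}\partial f=-\bar\nu^T$ (obtained from the $\bar f^Tf$-identities exactly as for $\Phi_2$ in Section~\ref{sec:metvaruniv}) finish the computation.
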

Now for the second variation of the metric we need to calculate the two terms $$\frac{d^2}{d\epsilon_1d\bar\epsilon_2}\vert_{\epsilon=0}((\overline{f^{\epsilon\nu^{\epsilon\mu}}})^Tf^{\epsilon\nu^{\epsilon\mu}})\circ\Phi_1^{\epsilon\mu}$$ and  $$\frac{d^2}{d\epsilon_1d\bar\epsilon_2}\vert_{\epsilon=0}((f^{\epsilon\nu^{\epsilon\mu}})^{-1}\partial f^{\epsilon\nu^{\epsilon\mu}})\circ\Phi_1^{\epsilon\mu}.$$ We calculate these the same way as we did with the previous set of coordinates.
\begin{align*}
  \Delta_0\frac{d^2}{d\epsilon_1d\bar\epsilon_2}\vert_{\epsilon=0}((\overline{f^{\epsilon\nu^{\epsilon\mu}}})^Tf^{\epsilon\nu^{\epsilon\mu}})\circ\Phi_1^{\epsilon\mu}=y^2\partial\bar\partial\frac{d^2}{d\epsilon_1d\bar\epsilon_2}\vert_{\epsilon=0}((\overline{f^{\epsilon\nu^{\epsilon\mu}}})^Tf^{\epsilon\nu^{\epsilon\mu}})\circ\Phi_1^{\epsilon\mu}\\ =\frac{d^2}{d\epsilon_1d\bar\epsilon_2}\vert_{\epsilon=0}y^2\partial\bar\partial((\overline{f_-^{\epsilon\nu^{\epsilon\mu}}})^T(\overline{f_+^{\epsilon\nu^{\epsilon\mu}}})^Tf_+^{\epsilon\nu^{\epsilon\mu}}f_-^{\epsilon\nu^{\epsilon\mu}})\circ\Phi_1^{\epsilon\mu}.
\end{align*}
Now we use \begin{align*}\bar\partial\partial (h\circ\Phi_1^{\epsilon\mu}) &=\partial\Phi_1^{\epsilon\mu}\partial\bar\Phi_1^{\epsilon\mu}(\partial\partial h)\circ\Phi_1^{\epsilon\mu}
+\bar\partial\bar\Phi_1^{\epsilon\mu}\partial\Phi_1^{\epsilon\mu}(\partial\bar\partial h)\circ\Phi_1^{\epsilon\mu}
\\ &\quad+\bar\partial\Phi_1^{\epsilon\mu}\partial\bar\Phi_1^{\epsilon\mu}(\bar\partial\partial h)\circ\Phi_1^{\epsilon\mu}+\partial\bar\Phi_1^{\epsilon\mu}\bar\partial\bar\Phi_1^{\epsilon\mu}(\bar\partial\bar\partial h) \circ\Phi_1^{\epsilon\mu}\\ &=(\partial\Phi_1^{\epsilon\mu}\epsilon\mu\bar\partial\bar\Phi_1^{\epsilon\mu}(\partial\partial h)\circ\Phi_1^{\epsilon\mu}
+\bar\partial\bar\Phi_1^{\epsilon\mu}\partial\Phi_1^{\epsilon\mu}(\partial\bar\partial h)\circ\Phi_1^{\epsilon\mu}
\\ &\quad+\vert\epsilon\mu\vert^2\partial\Phi_1^{\epsilon\mu}\partial\bar\Phi_1^{\epsilon\mu}(\bar\partial\partial h)\circ\Phi_1^{\epsilon\mu}+\overline{\epsilon\mu}\bar\partial\bar\Phi_1^{\epsilon\mu}\bar\partial\bar\Phi_1^{\epsilon\mu}(\bar\partial\bar\partial h )\circ\Phi_1^{\epsilon\mu},
                \end{align*}
for $h=(\overline{f^{\epsilon\nu^{\epsilon\mu}}})^Tf^{\epsilon\nu^{\epsilon\mu}})$, and since we know that $\partial f^{\epsilon\nu^{\epsilon\mu}}$ and $\bar\partial f^{\epsilon\nu^{\epsilon\mu}}$ vanish to first-order in $\epsilon$, we only have the surviving terms
\begin{align*}
  \Delta_0\frac{d^2}{d\epsilon_1d\bar\epsilon_2}\vert_{\epsilon=0}((\overline{f^{\epsilon\nu^{\epsilon\mu}}})^Tf^{\epsilon\nu^{\epsilon\mu}})\circ\Phi_1^{\epsilon\mu}=y^2(-\partial\mu_1\bar\nu_2^T-\bar\partial\bar\mu_2\nu_1+[\nu_1,\bar\nu_2^T]),
\end{align*}
which is exactly like in the previous case. We proceed on to calculate $\frac{d^2}{d\epsilon_1d\bar\epsilon_2}\vert_{\epsilon=0}((f^{\epsilon\nu^{\epsilon\mu}})^{-1}\partial f^{\epsilon\nu^{\epsilon\mu}})\circ\Phi_1^{\epsilon\mu}$, and so we study
\begin{align*}
\bar\partial \Delta_0^{-1}y^2(-\mu_1\partial\bar\nu_2^T-\bar\mu_2\bar\partial\nu_1+[\nu_1,\bar\nu_2^T])&=  \bar\partial \frac{d^2}{d\epsilon_1d\bar\epsilon_2}\vert_{\epsilon=0}((\overline{f^{\epsilon\nu^{\epsilon\mu}}})^Tf^{\epsilon\nu^{\epsilon\mu}})\circ\Phi_1^{\epsilon\mu} \\ &= \frac{d^2}{d\epsilon_1d\bar\epsilon_2}\vert_{\epsilon=0}(\bar\partial((\overline{f^{\epsilon\nu^{\epsilon\mu}}})^Tf^{\epsilon\nu^{\epsilon\mu}}))\circ\Phi_1^{\epsilon\mu}\bar\partial \bar\Phi_1^{\epsilon\mu}\\ &+\frac{d^2}{d\epsilon_1d\bar\epsilon_2}\vert_{\epsilon=0}(\partial((\overline{f^{\epsilon\nu^{\epsilon\mu}}})^Tf^{\epsilon\nu^{\epsilon\mu}}))\circ\Phi_1^{\epsilon\mu}\bar\partial \Phi_1^{\epsilon\mu}\\ &= \frac{d^2}{d\epsilon_1d\bar\epsilon_2}\vert_{\epsilon=0}(((\overline{\partial f^{\epsilon\nu^{\epsilon\mu}}})^Tf^{\epsilon\nu^{\epsilon\mu}}))\circ\Phi_1^{\epsilon\mu}\bar\partial \bar\Phi_1^{\epsilon\mu}\\ &+\frac{d^2}{d\epsilon_1d\bar\epsilon_2}\vert_{\epsilon=0}(((\overline{\bar\partial f^{\epsilon\nu^{\epsilon\mu}}})^Tf^{\epsilon\nu^{\epsilon\mu}}))\circ\Phi_1^{\epsilon\mu}\bar\partial \Phi_1^{\epsilon\mu}\\ &+ \frac{d^2}{d\epsilon_1d\bar\epsilon_2}\vert_{\epsilon=0}(((\overline{f^{\epsilon\nu^{\epsilon\mu}}})^T \bar\partial f^{\epsilon\nu^{\epsilon\mu}}))\circ\Phi_1^{\epsilon\mu}\bar\partial \bar\Phi_1^{\epsilon\mu}\\ &+\frac{d^2}{d\epsilon_1d\bar\epsilon_2}\vert_{\epsilon=0}(((\overline{f^{\epsilon\nu^{\epsilon\mu}}})^T \partial f^{\epsilon\nu^{\epsilon\mu}}))\circ\Phi_1^{\epsilon\mu}\bar\partial \Phi_1^{\epsilon\mu}.
\end{align*}
Here the second and fourth term cancel, as is seen by using that two of the factors vanish to first-order in $\epsilon$, which then give $\mu_1\bar\nu_2^T$ and $-\mu_1\bar\nu_2^T$ respectively.
\begin{align*}
 \frac{d^2}{d\epsilon_1d\bar\epsilon_2}\vert_{\epsilon=0}&(((\overline{\partial f^{\epsilon\nu^{\epsilon\mu}}})^Tf^{\epsilon\nu^{\epsilon\mu}}))\circ\Phi_1^{\epsilon\mu}\bar\partial \bar\Phi_1^{\epsilon\mu} \\ &\quad + \frac{d^2}{d\epsilon_1d\bar\epsilon_2}\vert_{\epsilon=0}(((\overline{f^{\epsilon\nu^{\epsilon\mu}}})^T \bar\partial f^{\epsilon\nu^{\epsilon\mu}}))\circ\Phi_1^{\epsilon\mu}\bar\partial \bar\Phi_1^{\epsilon\mu}\\ &=
\frac{d^2}{d\epsilon_1d\bar\epsilon_2}\vert_{\epsilon=0}((\overline{(f^{\epsilon\nu^{\epsilon\mu}})^{-1}\partial f^{\epsilon\nu^{\epsilon\mu}}}^T\overline{f^{\epsilon\nu^{\epsilon\mu}}}^Tf^{\epsilon\nu^{\epsilon\mu}}))\circ\Phi_1^{\epsilon\mu}\bar\partial \bar\Phi_1^{\epsilon\mu} \\ &\quad + \frac{d^2}{d\epsilon_1d\bar\epsilon_2}\vert_{\epsilon=0}(((\overline{f^{\epsilon\nu^{\epsilon\mu}}})^T  f^{\epsilon\nu^{\epsilon\mu}}\epsilon\nu^{\epsilon\mu}))\circ\Phi_1^{\epsilon\mu}\bar\partial \bar\Phi_1^{\epsilon\mu}\\ &=
\frac{d^2}{d\epsilon_1d\bar\epsilon_2}\vert_{\epsilon=0}((\overline{(f^{\epsilon\nu^{\epsilon\mu}})^{-1}\partial f^{\epsilon\nu^{\epsilon\mu}}}^T)\circ\Phi_1^{\epsilon\mu}\bar\partial \bar\Phi_1^{\epsilon\mu} \\ &\quad + \frac{d^2}{d\epsilon_1d\bar\epsilon_2}\vert_{\epsilon=0}(\epsilon\nu^{\epsilon\mu}))\circ\Phi_1^{\epsilon\mu}\bar\partial \bar\Phi_1^{\epsilon\mu}\\ &=
\frac{d^2}{d\epsilon_1d\bar\epsilon_2}\vert_{\epsilon=0}((\overline{(f^{\epsilon\nu^{\epsilon\mu}})^{-1}\partial f^{\epsilon\nu^{\epsilon\mu}}}^T)\circ\Phi_1^{\epsilon\mu}\bar\partial \bar\Phi_1^{\epsilon\mu}+\bar\partial\Delta_0^{-1}\partial^* \bar\mu_2\nu_1.
\end{align*}
Now this is different from the previous coordinates.
We need however to consider two things more. The first is the second variation of the harmonic projection. Since the only relevant part is the contribution where both $\bar\partial$ and $\bar\partial^*$ have been differentiated in $\bar\partial\Delta_0^{-1}\bar\partial^*$ and the coordinates agree to second-order nothing will have changed and we have it gives the following term
\begin{align*}
(-\mu_1\partial+\text{ad}\nu_1)\Delta_0^{-1}(\partial^*\bar\mu_2-\star\text{ad}\nu_2\star).  
\end{align*}

The final term to consider is the new term in the formula for the metric, which is
\begin{align*}
\frac{d^2}{d\epsilon_1d\bar\epsilon_2}\vert_{\epsilon=0}  P_{\epsilon\nu^{\epsilon\mu}}^{0,1}\text{Ad}(f^{\epsilon\nu^{\epsilon\mu}})((\Phi_1^{\epsilon\mu})_*^{-1}(1-\vert \epsilon\mu\vert^2)\frac{d}{dt}\vert_{t=0}(\Phi_1^{\epsilon\mu+t{\tilde\mu}})P^{0,1}(\Phi_1^{\epsilon\mu+t{\tilde\mu}})^{-1}\epsilon\nu),
\end{align*}
as $\frac{d}{dt}\vert_{t=0}(\Phi_1^{\epsilon\mu+t{\tilde\mu}})P^{0,1}(\Phi_1^{\epsilon\mu+t{\tilde\mu}})^{-1}\epsilon\nu)$ vanishes to second-order this has to be differentiated twice
\begin{align*}
 \frac{d}{d\bar\epsilon_2}\vert_{\epsilon=0} \frac{d}{dt}\vert_{t=0}&(\Phi_1^{\epsilon\mu+t{\tilde\mu}})P^{0,1}(\Phi_1^{\epsilon\mu+t{\tilde\mu}})^{-1}\nu_1)\\ &=- (\frac{d}{d\bar\epsilon_2}\vert_{\epsilon=0} \frac{d}{dt}\vert_{t=0}(\Phi_1^{\epsilon\mu+t{\tilde\mu}})\bar\partial (\Phi_1^{\epsilon\mu+t{\tilde\mu}})^{-1}\\ &\hspace{4cm}\Delta_0^{-1} \frac{d}{d\bar\epsilon_2}\vert_{\epsilon=0}\Phi_1^{\epsilon\mu+t{\tilde\mu}})\bar\partial^*(\Phi_1^{\epsilon\mu+t{\tilde\mu}})^{-1}\nu_1)\\\quad&-\bar\partial  \frac{d}{d\bar\epsilon_2}\vert_{\epsilon=0} \frac{d}{dt}\vert_{t=0}(\Phi_1^{\epsilon\mu+t{\tilde\mu}})\Delta_0^{-1}\bar\partial^*(\Phi_1^{\epsilon\mu+t{\tilde\mu}})^{-1})\nu_1\\ &=-\tilde\mu\partial\Delta_0^{-1}\partial^*\bar\mu_2\nu_1-\bar\partial  \frac{d}{d\bar\epsilon_2}\vert_{\epsilon=0} \frac{d}{dt}\vert_{t=0}(\Phi_1^{\epsilon\mu+t{\tilde\mu}})\Delta_0^{-1}\bar\partial^*(\Phi_1^{\epsilon\mu+t{\tilde\mu}})^{-1}\nu_1.
\end{align*}
We are now ready to gather all the contributions in the following theorem.
\begin{thm}
  \label{lem:metvarfibcord}
We have the following for the second variation of the metric at $(X,E)$ in the fibered coordinates:
\begin{align*}
   \frac{d^2}{d\epsilon_1d\bar\epsilon_2}\vert_{\epsilon=0}g^{VB}_{\epsilon\nu^{\epsilon\mu}}(&\mu_3\oplus\nu_3,\mu_4\oplus\nu_4)= \int_\Sigma \tr((-\mu_1\partial+\text{ad}\nu_1)\Delta_0^{-1}((-\star)\text{ad}\nu_2\star+\partial^*\bar\mu_2)\nu_3 )\wedge\bar\nu_4^T\\ &-i \int_\Sigma \tr(\text{ad}\Delta^{-1}_0((-\star)\text{ad}\nu_2\star\nu_1-\star(\partial\mu_1\bar\nu_2^T)-\star(\bar\partial\bar\mu_2\nu_1))\nu_3\wedge\bar\nu_4^T)\\ &-i\int_\Sigma \tr(\text{ad}\nu_1-\mu_1\partial)\Delta_o^{-1}\bar\partial^* \mu_3\bar\nu_2^T\wedge\bar\nu_4^T
\\ &+i\int_\Sigma \mu_1\bar\mu_2\tr\nu_3\wedge\bar\nu_4^T\\ &-i\int_\Sigma \tr\mu_3(\partial\Delta_0^{-1}(\star[\star \nu_1\nu_2]-\star(\partial\mu_1\bar\nu_2^T)-\star(\bar\partial\bar\mu_2\nu_1))\wedge\bar\nu_4^T\\ &+i\int_\Sigma \tr \mu_3 \partial\Delta_0^{-1}\bar\partial^* \mu_1\bar\nu_2^T\wedge\overline{\nu_4}^T \\ &-i\int_\Sigma \tr\bar\partial\Delta_o^{-1}(-\star\text{ad}\nu_2\star+\partial^*\bar\mu_2) \nu_3\wedge(-\bar\mu_4\nu_1)
\\ &-i\int_\Sigma \tr\nu_3\wedge \overline{\mu_4(\partial\Delta_0^{-1}(\star[\star \nu_2\nu_1]-\star(\partial\mu_2\bar\nu_1^T)-\star(\bar\partial\bar\mu_1\nu_2))}^T\\ &+i\int_\Sigma \tr\nu_3\wedge\overline{\mu_4\partial\Delta_0^{-1}\bar\partial^*\mu_2\bar\nu_1^T}^T-i\int_\Sigma \tr\mu_3\nu_1\wedge\overline{\mu_4\nu_2}^T\\ &+i\int_\Sigma \mu_3\partial\Delta_0^{-1}\partial^*\bar\mu_2\nu_1\wedge \bar\nu_4^T+i\int_\Sigma \nu_3\wedge \overline{\mu_4\partial\Delta_0^{-1}\partial^*\bar\mu_1\nu_2}^T
\end{align*}
\end{thm}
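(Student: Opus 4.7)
The strategy mirrors the proof of Theorem \ref{thm:unviersalsecondordvar} for the universal coordinates, differentiating the nine-term expansion of $g_{VB}^{\epsilon\nu^{\epsilon\mu}}$ twice and substituting the ingredients already assembled in this section. First, I would observe that three of the nine terms drop out immediately: any term containing the factor $P_{\epsilon\nu^{\epsilon\mu}}^{0,1}\text{Ad}(f^{\epsilon\nu^{\epsilon\mu}})((\Phi_1^{\epsilon\mu})_*^{-1}(1-\vert\epsilon\mu\vert^2)\frac{d}{dt}\vert_{t=0}(\Phi_1^{\epsilon\mu+t\tilde\mu})P^{0,1}(\Phi_1^{\epsilon\mu+t\tilde\mu})^{-1}\nu)$ vanishes to second order in $\epsilon$, and the analogous $\mu_i^{\epsilon\mu}(f^{\epsilon\nu^{\epsilon\mu}})^{-1}\partial f^{\epsilon\nu^{\epsilon\mu}}$ factor vanishes to first order in $\epsilon$, so any product of two of these (or three of the second type) will contribute only at higher order than the mixed $\partial_{\epsilon_1}\partial_{\bar\epsilon_2}$ derivative sees.

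Next I would process the six remaining terms one by one, in direct parallel with the four blocks appearing in the universal case. For each block, the derivative $\frac{d^2}{d\epsilon_1 d\bar\epsilon_2}\vert_{\epsilon=0}$ distributes over the three groups of factors: (a) the inner product factor $((\overline{f^{\epsilon\nu^{\epsilon\mu}}})^T f^{\epsilon\nu^{\epsilon\mu}})\circ\Phi_1^{\epsilon\mu}$, which enters through adjoint-type terms; (b) the logarithmic derivative $(f^{\epsilon\nu^{\epsilon\mu}})^{-1}\partial f^{\epsilon\nu^{\epsilon\mu}}$, which vanishes at $\epsilon=0$ and hence must itself be differentiated; and (c) the twisted harmonic projection, whose second variation was reduced to $(-\mu_1\partial+\text{ad}\nu_1)\Delta_0^{-1}(\partial^*\bar\mu_2-\star\text{ad}\nu_2\star)$, exactly as in the universal case because the two coordinate systems agree to second order. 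The computations already displayed in the excerpt give the required formulas: the $\Delta_0$ of the second variation of the metric $(\bar f)^Tf$ is $y^2([\nu_1,\bar\nu_2^T]-\partial\mu_1\bar\nu_2^T-\partial\bar\mu_2\nu_1)$, and the second variation of $(f)^{-1}\partial f$ picks up the additional term $\bar\partial\Delta_0^{-1}\partial^*\bar\mu_2\nu_1$ relative to the universal case.

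I would then address the genuinely new term, namely the contribution of $\frac{d}{dt}\vert_{t=0}(\Phi_1^{\epsilon\mu+t\tilde\mu})P^{0,1}(\Phi_1^{\epsilon\mu+t\tilde\mu})^{-1}\epsilon\nu$. Since this factor vanishes to second order in $\epsilon$, it must be differentiated in both $\epsilon_1$ and $\bar\epsilon_2$; applying Lemma \ref{lem:projderiv} to the remaining $\bar\partial\Delta_0^{-1}\bar\partial^*$ structure yields the expression $-\tilde\mu\partial\Delta_0^{-1}\partial^*\bar\mu_2\nu_1$ plus an exact term which is annihilated by harmonic projection against $\bar\nu_4^T$. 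This produces precisely the two new integrals $i\int_\Sigma \mu_3\partial\Delta_0^{-1}\partial^*\bar\mu_2\nu_1\wedge\bar\nu_4^T$ and $i\int_\Sigma \nu_3\wedge\overline{\mu_4\partial\Delta_0^{-1}\partial^*\bar\mu_1\nu_2}^T$ that distinguish the fibered formula from the universal one, along with the two analogous corrections $i\int_\Sigma\tr\mu_3\partial\Delta_0^{-1}\bar\partial^*\mu_1\bar\nu_2^T\wedge\overline{\nu_4}^T$ and its conjugate coming from interaction with the $(f)^{-1}\partial f$ block.

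Finally I would assemble the contributions term by term as in the universal calculation, carefully tracking signs and the difference in the second variation of $(f)^{-1}\partial f$ between the two coordinate systems. The main obstacle is bookkeeping: keeping the projections $P^{0,1}$ in their correct adjoint positions, remembering that the Teichm\"uller direction still yields the classical Bers contribution $\mu_1\bar\mu_2\tr\nu_3\wedge\bar\nu_4^T$ unchanged, and making sure the new terms involving $\partial\Delta_0^{-1}\partial^*$ are correctly paired with $\mu_3,\mu_4$ (rather than $\nu_3,\nu_4$), since these are exactly the differences from Theorem~\ref{thm:unviersalsecondordvar} that will be exploited in the subsequent subsection to prove the second part of Theorem \ref{thm:secondordvar}.
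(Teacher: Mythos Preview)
Your proposal is correct and follows essentially the same approach as the paper: the paper likewise drops three of the nine terms by the vanishing-order count, computes the second variations of $(\overline{f})^Tf$, of $f^{-1}\partial f$, of the harmonic projection, and of the genuinely new projection-derivative term, and then assembles the pieces exactly as you describe. The only difference is presentational---the paper lays out all these ingredients \emph{before} stating the theorem rather than inside a proof environment---so your outline faithfully reconstructs the argument.
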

Comparing this to the previous coordinates (Theorem~\ref{thm:unviersalsecondordvar}) we see that there are four terms here which we didn't have before and two we no longer have. The new terms are
\begin{align*}
  i\int_\Sigma \mu_3\partial\Delta_0^{-1}\partial^*\bar\mu_2\nu_1\wedge \bar\nu_4^T,\\i\int_\Sigma \nu_3\wedge \overline{\mu_4\partial\Delta_0^{-1}\partial^*\bar\mu_1\nu_2}^T,\\
i\int_\Sigma \tr \mu_3 \partial\Delta_0^{-1}\bar\partial^* \mu_1\bar\nu_2^T\wedge\overline{\nu_4}^T
  \end{align*}
    and
  \begin{align*}
i\int_\Sigma \tr\nu_3\wedge\overline{\mu_4\partial\Delta_0^{-1}\bar\partial^*\mu_2\bar\nu_1^T}^T,
\end{align*}
While the ones we no longer have are
\begin{align*}
-i\int_\Sigma \tr\nu_3\wedge\overline{\mu_1\bar\mu_4\nu_2}^T  \end{align*}
    and
  \begin{align*} -i\int_\Sigma \tr \bar\mu_2\mu_3\nu_1\wedge\overline{\nu_4}^T. 
\end{align*}
Now we have that for $\nu_1=\nu_4$ and $\mu_2=\mu_3$ and the rest $0$ the difference between the two expressions are
\begin{multline*}
  - i\int_\Sigma \mu_3\partial\Delta_0^{-1}\partial^*\bar\mu_2\nu_1\wedge \bar\nu_4^T-i\int_\Sigma \tr \bar\mu_2\mu_3\nu_1\wedge\overline{\nu_4}^T\\=
  - i\int_\Sigma \Delta_0^{-1}\partial^*\bar\mu_2\nu_1\wedge \overline{\partial^*\bar\mu_2\nu_1}^T\rho-i\int_\Sigma \tr \bar\mu_2\mu_2\nu_1\wedge\overline{\nu_1}^T
\end{multline*}
Since $\Delta$ is a positive operator we have that $
 -i \int_\Sigma \Delta_0^{-1}\partial^*\bar\mu_2\nu_1\wedge \overline{\partial^*\bar\mu_2\nu_1}^T\rho\geq0 $ and for obvious reasons $-i\int_\Sigma \tr \bar\mu_2\mu_2\nu_1\wedge\overline{\nu_1}^T>0$, if $\nu_1\neq 0$ and $\mu_2\neq 0$.
 \begin{thm}
\label{thm:differthrid}   The coordinates of \ref{thm:uni coord} and the Fibered coordinates agree to second order, but differ at thrid order in the derivatives at the center point.
 \end{thm}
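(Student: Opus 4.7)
The plan is to leverage the two explicit formulas already in hand: Theorem \ref{thm:unviersalsecondordvar} for the universal coordinates and Theorem \ref{lem:metvarfibcord} for the fibered coordinates. The first part of the statement (agreement to second order in the coordinates) was already established at the end of Section \ref{sec:Fused}, by comparing the Kodaira--Spencer maps from Propositions \ref{prop:KSuniv} and \ref{prop:KSfib} and checking that their first-order variations in $\mu$ and $\nu$ coincide at the center. So I only need to argue that the coordinates fail to agree at third order.

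The key principle is the following: the Hermitian metric $g^{VB}$ is an intrinsic tensor on $\mathcal{T}\times M'$, so if two complex coordinate systems centered at the same point agreed to third order, then the second derivatives of the corresponding coordinate expressions of $g^{VB}$ at the center would coincide. Therefore, to prove the coordinates differ at third order it suffices to exhibit tangent vectors $\mu_1\oplus\nu_1$, $\mu_2\oplus\nu_2$ at the center and test vectors $\mu_3\oplus\nu_3$, $\mu_4\oplus\nu_4$ for which the mixed second derivative $\partial^2_{\epsilon_1\bar\epsilon_2}|_{\epsilon=0}\, g^{VB}_{\epsilon(\cdot)}(\mu_3\oplus\nu_3,\mu_4\oplus\nu_4)$ takes different values in the two coordinate systems.

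Concretely, subtract the formula in Theorem \ref{lem:metvarfibcord} from the one in Theorem \ref{thm:unviersalsecondordvar}. Most terms cancel (this is a sanity check that the first-order comparison of the preceding subsection was consistent). What remains is the sum of the four ``new'' terms listed in the excerpt minus the two ``missing'' terms. I then specialize to $\nu_1=\nu_4$, $\mu_2=\mu_3$, and all other inputs zero. The displayed computation then collapses the difference to
\[
-i\int_\Sigma \bigl(\Delta_0^{-1}\partial^*\bar\mu_2\,\nu_1\bigr)\wedge\overline{\partial^*\bar\mu_2\,\nu_1}^{\,T}\rho \;-\; i\int_\Sigma \tr\bigl(\bar\mu_2\mu_2\,\nu_1\wedge\overline{\nu_1}^{\,T}\bigr).
\]
Both terms have the same sign: $\Delta_0$ is a nonnegative self-adjoint operator so the first integrand is nonnegative pointwise (after the $-i$ is absorbed into the wedge of a $(0,1)$ and $(1,0)$ form and the hyperbolic density $\rho$), and the second integrand is manifestly strictly positive when $\nu_1\neq 0$ and $\mu_2\neq 0$. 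Hence the difference is strictly nonzero for a generic choice of nonzero $\nu_1$ and $\mu_2$, proving the two sets of coordinates cannot be equal to third order.

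The main obstacle in this plan is just bookkeeping: one must be careful that the cancellations between the two second-variation formulas are exactly the ones predicted by the second-order agreement, so the residual is precisely the four ``new'' minus two ``missing'' terms and nothing else. The positivity step at the end is straightforward, but it crucially requires choosing test vectors ($\nu_1=\nu_4$ and $\mu_2=\mu_3$ with the remaining entries zero) so that the two residual terms add rather than potentially cancel; any choice that makes the Hermitian form on the right definite will do, and the one above is the most transparent.
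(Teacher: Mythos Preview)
Your proposal is correct and follows essentially the same route as the paper: the agreement to second order is quoted from the comparison of Kodaira--Spencer maps at the end of Section~\ref{sec:Fused}, and the failure at third order is detected by subtracting the two second-variation formulas for the metric (Theorems~\ref{thm:unviersalsecondordvar} and~\ref{lem:metvarfibcord}), specializing to $\nu_1=\nu_4$, $\mu_2=\mu_3$ with the remaining entries zero, and invoking positivity of $\Delta_0^{-1}$ together with the manifest positivity of $-i\int_\Sigma \tr(\bar\mu_2\mu_2\,\nu_1\wedge\overline{\nu_1}^{\,T})$. The only addition in your write-up is the explicit ``key principle'' explaining why a discrepancy in the second derivative of the metric forces a third-order discrepancy in the coordinates; the paper leaves this implicit, but it is the correct justification.
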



\begin{thebibliography}{}

\bibitem[Ahlfors and Bers, 1960]{bers}
Ahlfors, L. and Bers, L. (1960).
\newblock Riemann's mapping theorem for variable metrics.
\newblock {\em Annals of Mathematics}, 72(2):pp. 385--404.

\bibitem[Andersen, 2012]{Andersen2}
Andersen, J.~E. (2012).
\newblock Hitchin's connection, {T}oeplitz operators, and symmetry invariant
  deformation quantization.
\newblock {\em Quantum Topol.}, 3(3-4):293--325.

\bibitem[Andersen and Gammelgaard, 2011]{Andersen1}
Andersen, J.~E. and Gammelgaard, N.~L. (2011).
\newblock Hitchin's projectively flat connection, {T}oeplitz operators and the
  asymptotic expansion of {TQFT} curve operators.
\newblock In {\em Grassmannians, moduli spaces and vector bundles}, volume~14
  of {\em Clay Math. Proc.}, pages 1--24. Amer. Math. Soc., Providence, RI.

\bibitem[Andersen et~al., 2012]{JEANLGML}
Andersen, J.~E., Gammelgaard, N.~L., and Lauridsen, M.~R. (2012).
\newblock Hitchin's connection in metaplectic quantization.
\newblock {\em Quantum Topol.}, 3(3-4):327--357.

\bibitem[Hitchin, 1990]{hitchin2}
Hitchin, N.~J. (1990).
\newblock Flat connections and geometric quantization.
\newblock {\em Comm. Math. Phys.}, 131(2):347--380.

\bibitem[Mehta and Seshadri, 190]{MS}
{Mehta, V. B. and Seshadri, C. S.}(1980).
\newblock {Moduli of vector bundles on curves with parabolic structures},
\newblock {\em Math. Ann.},
  248,(3):{205--239}

\bibitem[Narasimhan et~al., 1963]{NandS}
Narasimhan, M.~S., Simha, R.~R., Narasimhan, R., and Seshadri, C.~S. (1963).
\newblock {\em Riemann surfaces}, volume~1 of {\em Mathematical Pamphlets}.
\newblock Tata Institute of Fundamental Research, Bombay.

\bibitem[Takhtadzhyan and Zograf, 1989]{ZTVB}
Takhtadzhyan, L.~A. and Zograf, P.~G.(1989).
\newblock The geometry of moduli spaces of vector bundles over a {R}iemann
  surface.
\newblock {\em Izv. Akad. Nauk SSSR Ser. Mat.}, 53(4):753--770, 911.


\bibitem[Takhtajan and Zograf, 2008]{ZTPVB}
   Takhtajan, Leon A.  and Zograf, P. (2008).
\newblock The first {C}hern form on moduli of parabolic bundles.
\newblock {\em Math. Ann.}, 341(1):113--135.

 \bibitem[Takhtajan and Zograf, 1991]{ZTpuncRie}
  Takhtajan, L. A. and Zograf, P. G.(1991).
\newblock A local index theorem for families of {$\overline\partial$}-operators on punctured {R}iemann  surfaces and a new {K}\"ahler metric on their moduli spaces.
\newblock {\em Comm. Math. Phys.}, 137(2):{399--426}.

\bibitem[Wolpert, 1986]{Wolpert},
   Wolpert, Scott A. (1986).
\newblock Chern forms and the {R}iemann tensor for the moduli space of
             curves
\newblock  {\em Invent. Math.}, {85(1)}:119--145.
\end{thebibliography}
\end{document}